\numberwithin{equation}{section}
\newtheorem{thm}{Theorem}[section]
\newtheorem{lem}[thm]{Lemma}
\newtheorem{cor}[thm]{Corollary}
\theoremstyle{definition}
\newtheorem{defn}[thm]{Definition}
\theoremstyle{remark}
\newtheorem{question}{Question}
\newcommand{\nat}{\mathbb{N}}
\newcommand{\real}{\mathbb{R}}
\newcommand{\complex}{\mathbb{C}}
\newcommand{\sphere}{\mathbb{S}}
\newcommand{\N}{\mathcal{N}}
\newcommand{\iu}{\mathrm{i}}
\newcommand{\eps}{\varepsilon}
\newcommand{\Lebesgue}{\mathcal{L}}
\newcommand{\dif}{\mathrm{d}}
\DeclareMathOperator{\vspan}{span}
\DeclareMathOperator{\Dom}{Dom}
\DeclarePairedDelimiter{\abs}{\lvert}{\rvert}
\DeclarePairedDelimiter{\norm}{\lVert}{\rVert}
\DeclarePairedDelimiter{\parens}{(}{)}
\DeclarePairedDelimiter{\set}{\{}{\}}
\DeclarePairedDelimiter{\brackets}{\lbrack}{\rbrack}
\DeclarePairedDelimiter{\angles}{\langle}{\rangle}
\DeclarePairedDelimiter{\cci}{\lbrack}{\rbrack}
\DeclarePairedDelimiter{\coi}{\lbrack}{\lbrack}
\DeclarePairedDelimiter{\oci}{\rbrack}{\rbrack}
\DeclarePairedDelimiter{\ooi}{\rbrack}{\lbrack}
\title{Sign-changing multi-peak standing waves of the NLSE with a point interaction}
\begin{document}

\author{Gustavo de Paula Ramos\thanks{gpramos@icmc.usp.br}}
\affil{Universidade de São Paulo, Instituto de Ciências Matemáticas e de Computação, Avenida Trabalhador São-Carlense, 400, 13566-590 São Carlos SP, Brazil}
%\address{}
%\email{gpramos@ime.usp.br}
%\urladdr{http://gpramos.com}
%\subjclass{35Q55, 35A21, 35B40, 35J20, 35J61}
%\keywords{delta potential, multi-bump solutions, nodal solutions, singular solutions, perturbation method}

\maketitle
\begin{abstract}
\noindent
Consider the following semilinear problem with a point interaction in $\real^N$:
\[- \Delta_\alpha u + \omega u = u |u|^{p - 2},\]
where $N \in \{2, 3\}$; $\omega > 0$;
$- \Delta_\alpha$ denotes the Hamiltonian of point interaction with inverse $s$-wave scattering length
$- (4 \pi \alpha)^{- 1}$ and we want to solve for $u \colon \mathbb{R}^N \to \mathbb{R}$. By means of Lyapunov--Schmidt reduction, we prove that this problem has sign-changing multi-peak solutions when either
\begin{enumerate*}
\item
$N = 2$, $\alpha \in \real$,
$p_* < p \leq 3$ and $\omega$ is sufficiently large or
\item
$N = 3$,
$0 < \alpha < \infty$, $p_* < p < 3$ and $\omega$ is sufficiently small,
\end{enumerate*}
where
$2.45 < p_* := \frac{9 + \sqrt{113}}{8} < 2.46$.
\end{abstract}

\tableofcontents

\section{Introduction}

\subsection{Context and motivation}

The presence of punctual impurities is often modeled by means of $\delta$ potentials. For instance, the following formal equation on
$\real$ was proposed in \cite{sakaguchiSingularSolitons2020, shamrizSingularMeanfieldStates2020} to explain the existence of singular solitons of the defocusing septimal nonlinear Schrödinger equation (NLSE):
\[
-
\frac{1}{2} u''
+
\alpha \delta_0 u
+
\omega u
+
u^7
=
0,
\]
where
$\alpha < 0$, $\omega > 0$ and $\delta_0$ denotes the Dirac delta.

In this paper, we consider the model for this kind of interaction in dimensions 2 and 3 furnished by the \emph{Hamiltonian of point interaction}
\[- \Delta_\alpha \colon \Dom \parens{- \Delta_\alpha} \to L^2\]
as defined in Section \ref{sect:Delta_alpha}, where $\Dom \parens{- \Delta_\alpha}$ is a certain subspace of $L^2$. Briefly,
$- \Delta_\alpha$ is defined as an $L^2$-self-adjoint differential operator used to model zero-range interactions with the goal of, at least when $\alpha \neq 0$, mimicking the action of
$- \Delta + \frac{1}{\alpha} \delta_0$ on $C_c^\infty$.

We are concerned with the following semilinear equation with a point interaction in $\real^N$:
\begin{equation}
\label{eqn:delta-NLS}
- \Delta_\alpha u + \omega u = f \parens{u},
\end{equation}
where $N \in \set{2, 3}$; $\omega > 0$; $\alpha \in \real$;
$f \parens{t} := t \abs{t}^{p - 2}$ for every $t \in \real$ ($p > 2$) and we want to solve for $u \colon \real^N \to \real$. Problem \eqref{eqn:delta-NLS} is obtained when looking for standing wave solutions with an \emph{a priori} known energy $\omega$ to the following focusing NLSE with a point interaction in $\real^N$
($\delta$-NLSE):
\begin{equation}
\label{eqn:the-delta-NLSE}
- \Delta_\alpha \psi = \iu \psi_t + f \parens{\psi}.
\end{equation}

Even though the definition of $- \Delta_\alpha$ remounts to Berezin \& Faddeev's \cite{berezinRemarkSchrodingerEquation1961}, the study of nonlinear problems with a point interaction in higher dimensions has only gained traction a few years ago. Let us do a brief review of recent developments. The well-posedness in the operator domain of the $\delta$-NLSE
\[- \Delta_\alpha \psi = \iu \psi_t \pm f \parens{\psi}\]
was established by Cacciapuoti, Finco \& Noja in \cite{cacciapuotiWellPosednessNonlinear2021}.
The existence and orbital stability of minimizers of the action functional associated with \eqref{eqn:delta-NLS} in dimension 2 were addressed by Fukaya, Georgiev \& Ikeda in \cite{fukayaStabilityInstabilityStanding2022}. In \cite{adamiGroundStatesPlanar2022, adamiExistenceStructureRobustness2022}, Adami, Boni, Carlone \& Tentarelli proved the existence of ground states for \eqref{eqn:the-delta-NLSE} and obtained some of their qualitative properties in dimensions 2 and 3. Even nonlinear problems with a point interaction and a nonlocal nonlinearity have started being considered, such as the Hartree equation (see \cite{georgievStandingWavesGlobal2024, michelangeliSingularHartreeEquation2021}) and the Kirchhoff equation or the Schrödinger--Poisson system (see \cite{depaularamosMinimizersConstrainedFunctionals2024}).

We need to recall a well-known result about multi-peak standing waves of the NLSE in the semiclassical regime to explain the motivation of this paper. Consider the following semilinear problem in $\real^N$:
\begin{equation}
\label{eqn:NLS-with-V}
- \eps^2 \Delta u + V \parens{x} u = f \parens{u},
\end{equation}
where $\eps > 0$ and
$V \colon \real^N \to \ooi{0, \infty}$ has a local maximum point $x_0 \in \real^3$. In \cite{kangInteractingBumpsSemiclassical2000}, Wei \& Kang proved that given a positive integer $K$,
\begin{enumerate*}
\item
there exists
$\eps_K > 0$ such that if
$0 < \eps < \eps_K$, then \eqref{eqn:NLS-with-V} has a solution with $K$ peaks and
\item
the peaks of these solutions collapse around $x_0$ as $\eps \to 0^+$. See also \cite{pistoiaMultipeakSolutionsClass2002,daprileNumberSignchangingSolutions2007,ruizClusterSolutionsSchrodingerPoissonSlater2011, ianniNonradialSignchangingSolutions2015} for related results.
\end{enumerate*}

These solutions arise due to the balance of two opposing effects on the peaks: the attractive effect of the nonlinearity and the repulsive effect of the potential around $x_0$. At least when $N = 3$ and $0 \leq \alpha \leq \infty$, the operator $-\Delta_\alpha$ similarly models a repulsive point interaction (see Section \ref{sect:functional-framework}). As such, one may ask whether a similar balance could be achieved, hence the question that follows.

\begin{question}
\label{question}
Does \eqref{eqn:delta-NLS} admit solutions with multiple peaks in limit situations of the energy $\omega$?
\end{question}

In this paper, we answer this question positively for sign-changing solutions with an arbitrary number of peaks. Rescalings of Problem \eqref{eqn:delta-NLS} behave quite differently in dimensions 2 and 3 (see Appendix \ref{sect:equivalent-problem}), so we will actually consider two limit situations of
$\omega$ according to the considered dimension.

To finish, we comment on the organization of the rest of the introduction.
\begin{itemize}
\item
We fix the notation used throughout the paper in Section \ref{notation}.
\item
Section \ref{sect:Delta_alpha} recalls the definition of the Hamiltonian of point interaction
$- \Delta_\alpha$.
\item
We introduce the appropriate Hilbert space on which to look for weak solutions to the considered problem in Section \ref{sect:functional-framework}.
\item
Finally, Section \ref{sect:main-result} contains a precise statement of our main result.
\end{itemize}

\subsection{Notation and conventions}
\label{notation}

\begin{itemize}
\item
We exclusively consider vector spaces over $\real$.
\item
Square brackets are used to enclose the argument of (multi-)linear mappings.
\item
Given $r \in \cci{1, \infty}$, we let $r'$ denote its Hölder conjugate exponent.
\item
Except mentioned otherwise, we suppose that \begin{itemize}
\item
$N \in \set{2, 3}$;
\item
$\alpha \in \real$;
\item
$\eta \in \ooi{0, \infty}$.
\end{itemize}
\item
As for $\real^N$:
\begin{itemize}
\item
$B \parens{x, r}$ denotes the open ball centered at $x$ with radius $r$;
\item
we let $\sphere^{N - 1} = \set{x \in \real^N : \abs{x} = 1}$;
\item
$\Lebesgue$ denotes the Lebesgue measure;
\item
$\parens{x, y} \mapsto x \cdot y$ denotes the Euclidean inner product.
\end{itemize}
\item
Given a Hilbert space $H$,
\begin{itemize}
\item
$\parens{h_1, h_2} \mapsto \angles{h_1 \mid h_2}_H$ denotes the inner product of $H$ and
\item
if $f \colon H \to \real$ is differentiable, then $\nabla f \parens{x}$ denotes the gradient of $f$ at $x \in H$.
\end{itemize}
\item
Suppose that $X$, $Y$ are sets and consider mappings
$f \colon X \times Y \to \coi{0,\infty}$, $g \colon Y \to \coi{0, \infty}$. Given an element $x \in X$, we write
\[
\text{``}
f \parens{x, y} \lesssim g \parens{y} \quad \text{for every} \quad y \in Y
\text{''}
\]
to mean that there exists $C_x \in \ooi{0, \infty}$ such that $f \parens{x, y} \leq C_x g \parens{y}$ for every $y \in Y$.
\end{itemize}

\subsection{The Hamiltonian of point interaction}
\label{sect:Delta_alpha}

Consider the operator
$
- \Delta|_{
	C^\infty_0
	\parens{
		\real^N
		\setminus
		\set{0}
	}
}
$.
In dimension $N \geq 4$, this operator is essentially $L^2$-self-adjoint, so its only $L^2$-self-adjoint extension is
$
- \Delta|_{H^2 \parens{\real^N}}
$
(see \cite[p. 2]{albeverioSolvableModelsQuantum1988}).
In the lower-dimensional case
$N \in \set{2, 3}$, this operator admits a family of nontrivial $L^2$-self-adjoint extensions, which we denote by
$\set{- \Delta_\alpha}_{\alpha \in \oci{- \infty, \infty}}$ (see \cite[Chapters I.1 and I.5]{albeverioSolvableModelsQuantum1988}). We let $\alpha = \infty$ denote the Friedrichs extension
$
- \Delta_\infty
=
- \Delta|_{H^2 \parens{\real^N}}
$
as usual and we henceforth focus on the case
$\alpha \in \real$.

In order to describe the domain of
$- \Delta_\alpha$, we need to introduce a family of Green's functions. Given
$\lambda > 0$, we let
$
G_\lambda
\colon
\real^N \setminus \set{0}
\to
\ooi{0, \infty}
$
be given by
\[
G_\lambda \parens{x}
=
\frac{K_{\frac{N - 2}{2}} \parens*{\sqrt{\lambda} \abs{x}}}
	{\parens{2 \pi}^{\frac{N}{2}} \abs{x}^{\frac{N - 2}{2}}},
\]
so that
\[
\parens{- \Delta + \lambda} G_\lambda
=
\delta_0
\]
in the sense of distributions, where $K_\nu$ denotes a modified Bessel function of the second kind as defined in \cite[Section 9.6]{abramowitzHandbookMathematicalFunctions1972}. In the case $N = 3$, $G_\lambda$ is explicitly given by
\[
G_\lambda \parens{x}
=
\frac{
	e^{- \sqrt{\lambda} \abs{x}}
}{4 \pi \abs{x}}
\]
for every
$
x \in \real^3 \setminus \set{0}
$.
To simplify the notation, we will interpret the absence of the subscript $\lambda$ as the case
$\lambda = 1$, that is, we set $G = G_1$. To finish our discussion about Green's functions, we remark that
\[
G \in L^r
\quad \text{when} \quad
\begin{cases}
1 \leq r < \infty,
&\text{if} ~ N = 2;
\\
1 \leq r < 3,
&\text{if} ~ N = 3.
\end{cases}
\]

We can already proceed to a more detailed description of
$- \Delta_\alpha$. Its domain is given by
\begin{multline*}
\Dom \parens{- \Delta_\alpha}
=
\left\{
	\phi_\lambda + q G_\lambda :
	\phi_\lambda \in H^2, \quad
	q \in \real, \quad
	\lambda > 0
\right.
\\
\left.
	\text{and} \quad
	\beta_\alpha
	\parens{\lambda}
	q
	=
	\phi_\lambda \parens{0}
\right\},
\end{multline*}
where
\[
\beta_\alpha \parens{\lambda}
:=
\begin{cases}
{\displaystyle
	\alpha + \frac{\gamma - \log 2}{2 \pi} + \frac{1}{2 \pi} \log \sqrt{\lambda}
},
&\text{if} ~ N = 2;
\\
\\
{\displaystyle \alpha + \frac{\sqrt{\lambda}}{4 \pi}},
&\text{if} ~ N = 3
\end{cases}
\]
($\gamma$ denoting the Euler--Mascheroni constant). The operator $- \Delta_\alpha$ acts as
\[
- \Delta_\alpha u = - \Delta \phi_\lambda - \lambda q G_\lambda
\quad \text{for every} \quad
u = \phi_\lambda + q G_\lambda \in \Dom \parens{- \Delta_\alpha},
\]
or, equivalently,
\begin{equation}
\label{eqn:trick}
\parens{- \Delta_\alpha + \lambda} u
=
\parens{- \Delta + \lambda} \phi_\lambda
\quad \text{for every} \quad
u = \phi_\lambda + q G_\lambda \in \Dom \parens{- \Delta_\alpha}.
\end{equation}

\subsection{The energy space $H^1_\eta$}
\label{sect:functional-framework}

In view of \cite[Theorems 1.1.4 \& 5.4]{albeverioSolvableModelsQuantum1988}, the spectrum of $- \Delta_\alpha$ is described as follows:
\[
\begin{array}{l l}
\sigma \parens{- \Delta_\alpha}
=
\set{- 4 e^{- 2 \gamma - 4 \pi \alpha}}
\cup
\coi{0, \infty},
&\text{if} \quad N = 2;
\\ \\
\sigma \parens{- \Delta_\alpha}
=
\set{- \parens{4 \pi \alpha}^2}
\cup
\coi{0, \infty},
&\text{if} \quad
N = 3
\quad \text{and} \quad
\alpha < 0;
\\ \\
\sigma \parens{- \Delta_\alpha}
=
\coi{0, \infty}
&\text{if} \quad
N = 3
\quad \text{and} \quad
\alpha \geq 0.
\end{array}
\]
In particular, if
$
\omega > \inf \sigma \parens{- \Delta_\alpha}
$,
then the form
\[
\Dom \parens{- \Delta_\alpha} \ni u
\mapsto
\angles*{
	\parens{- \Delta_\alpha + \omega} u 
	~\middle|~
	u
}_{L^2}
\]
is positive-definite. By proceeding as such, we obtain a family of positive-definite forms that induce equivalent norms (see \cite[Section 1.15]{galloneSelfAdjointExtensionSchemes2023}).

In view of the change of variable in Appendix \ref{sect:equivalent-problem}, we will be mostly interested in the energy space naturally associated with $- \Delta_\eta + 1$ with
$\eta > 0$, which induces a positive-definite form due to the previous discussion. More precisely, we define the \emph{energy space} $H^1_\eta$ as the Hilbert space
\[
H^1_\eta
=
\parens*{
	\Dom \brackets{- \Delta_\eta + 1},
	\angles{\cdot \mid \cdot}_{H^1_\eta}
},
\]
where the \emph{form domain}
$\Dom \brackets{- \Delta_\eta + 1}$
is explicitly given by
\[
\Dom \brackets{- \Delta_\eta + 1}
:=
\set*{
	\phi + q G:
	\parens{\phi, q}
	\in
	H^1 \times \real
}
\subset
L^2
\]
and the inner product
$\angles{\cdot \mid \cdot}_{H^1_\eta}$
is given by
\[
\angles{u_1 \mid u_2}_{H^1_\eta}
:=
\angles{\phi_1 \mid \phi_2}_{H^1}
+
\beta_\eta \parens{1} q_1 q_2
\]
for every
$
u_j
=
\phi_j + q_j G
\in
\Dom \brackets{- \Delta_\eta}
$.
It is clear that
$H^1 \hookrightarrow H^1_\eta$ and
$\norm{\phi}_{H^1_\eta} = \norm{\phi}_{H^1}$ for every $\phi \in H^1$. Furthermore, it follows from the Sobolev embeddings of $H^1$ that
\begin{equation}
\label{eqn:Sobolev-embedding}
H^1_\eta \hookrightarrow L^r
\quad \text{when} \quad
\begin{cases}
2 \leq r < \infty,
&\text{if} ~ N = 2;
\\
2 \leq r < 3,
&\text{if} ~ N = 3.
\end{cases}
\end{equation}

\subsection{Main result}
\label{sect:main-result}

In Appendix \ref{sect:equivalent-problem}, we show that, up to rescaling,
\[- \Delta_\alpha u + \omega u = f \parens{u}\]
is equivalent to
\[
- \Delta_{\alpha_\omega} u + u = f \parens{u},
\]
where
\[
\alpha_\omega
:=
\begin{cases}
{\displaystyle \alpha + \frac{1}{2 \pi} \log \sqrt{\omega}},
&\text{if} ~ N = 2;
\\
\\
{\displaystyle \frac{\alpha}{\sqrt{\omega}}},
&\text{if} ~ N = 3.
\end{cases}
\]
One may expect $- \Delta_\eta$ to behave increasingly similarly as
$- \Delta_\infty = -\Delta|_{H^2}$
as one successively considers larger positive values for $\eta$ and it is clear that
\[
\alpha_\omega \to \infty
\quad \text{as} \quad
\begin{cases}
\omega \to \infty,
&\text{if} ~ N = 2;
\\
\omega \to 0^+,
&\text{if} ~ N = 3 ~ \text{and} ~ 0 < \alpha < \infty.
\end{cases}
\]
As such, we will suppose that either
\begin{itemize}
\item
$N = 2$ and $\omega$ is sufficiently large or
\item
$N = 3$, $0 < \alpha < \infty$ and $\omega$ is sufficiently small,
\end{itemize}
in which case it suffices to consider the problem
\begin{equation}
\label{eqn:delta-NLS-eta}
-\Delta_\eta u + u = f \parens{u}
\end{equation}
for a sufficiently large positive $\eta$. In this context, we understand a \emph{weak solution in $H^1_\eta$} to \eqref{eqn:delta-NLS-eta} to be a critical point of its associated \emph{action functional}
$S_\eta \colon H^1_\eta \to \real$ defined as
\begin{equation}
\label{eqn:action-functional}
S_\eta \parens{u}
=
\frac{1}{2} \norm{u}_{H^1_\eta}^2
-
\frac{1}{p} \norm{u}_{L^p}^p.
\end{equation}

Let us recall a few facts about the following semilinear problem:
\begin{equation}
\label{eqn:Kwong}
\begin{cases}
- \Delta u + u = u \abs{u}^{p - 2}
&\text{in} ~ \real^N;
\\
u \parens{x} \to 0
&\text{as} ~ \abs{x} \to \infty.
\end{cases}
\end{equation}
It is classical that \eqref{eqn:Kwong} has a unique positive radial solution of class $C^2$ (see \cite{kwongUniquenessPositiveSolutions1989}), which we denote by
$\Phi \colon \real^N \to \ooi{0, \infty}$. Furthermore,
$\Phi \in H^1$ and
\begin{equation}
\label{eqn:Phi-limit}
\abs{x}^{\frac {N - 1}{2}} e^{\abs{x}} \Phi \parens{x}
\xrightarrow[\abs{x} \to \infty]{}
\theta_\Phi
:=
\int e^{x_1} \Phi \parens{x}^{p - 1} \dif \Lebesgue \parens{x}
\in
\ooi{0, \infty}
\end{equation}
(see \cite{gidasSymmetryPositiveSolutions1981}). It will be important to consider translations of $\Phi$, so we set
\[\Phi_y = \Phi \parens{\cdot - y}\]
for every $y \in \real^N$. To finish, we recall that the critical points of the action functional $S_\infty \colon H^1 \to \real$ are weak solutions to \eqref{eqn:Kwong}, where
\[
S_\infty \parens{\phi}
:=
\frac{1}{2} \norm{\phi}_{H^1}^2 - \frac{1}{p} \norm{\phi}_{L^p}^p.
\]

We proceed to the statement of our main result.
\begin{thm}
\label{thm}
Consider an integer $K \geq 2$ and suppose that either
\begin{itemize}
\item
$N = 2$, $\alpha \in \real$ and
$p_* < p \leq 3$;
\item
$N = 3$, $0 < \alpha < \infty$ and
$p_* < p < 3$,
\end{itemize}
where
\[
2.45 < p_* := \frac{9 + \sqrt{113}}{8} < 2.46.
\]
Fix $\delta_1, \ldots, \delta_K \in \set{-1, 1}$ such that
\begin{equation}
\label{eqn:sign-condition}
\delta_K \delta_1 + \delta_1 \delta_2 + \ldots + \delta_{K - 1} \delta_K < 0.
\end{equation}
Then there exist $\eta_K > 1$ and sets
$
\set{r_\eta}_{\eta \geq \eta_K}
\subset
\ooi{0, \infty}
$,
\[
\set*{
	u_\eta \in L^2:
	u_\eta \in H^1_\eta
	\quad \text{and} \quad
	\nabla S_\eta 	
	\parens{u_\eta}
	=
	0
}_{\eta \geq \eta_K}
\]
such that
\[
\norm*{u_\eta - \sum_{1 \leq k \leq K} \delta_k \Phi_{\zeta_{r_\eta}^k}}_{H^1_\eta}
\xrightarrow[\eta \to \infty]{}
0
\quad \text{and} \quad
\frac{r_\eta}{\log \eta}
\xrightarrow[\eta \to \infty]{}
1,
\]
where
\[
\zeta_r^k
:=
\begin{cases}
{\displaystyle
	\frac{3 r}{2 \sin \parens*{\frac{\pi}{K}}}
	\parens{e^{\frac{2 \pi \iu k}{K}} - 1}
	-
	r
	\in \complex \simeq \real^2,
}
&\text{if} ~ N = 2;
\\
\\
{\displaystyle
	\parens*{
		\frac{3 r}{2 \sin \parens*{\frac{\pi}{K}}}
		\parens{e^{\frac{2 \pi \iu k}{K}} - 1}
		-
		r
		,
		0
	}
	\in
	\complex \times \real \simeq \real^3,
}
&\text{if} ~ N = 3
\end{cases}
\]
for every $k \in \set{1, \ldots, K}$ and
$r > 0$.\footnote{More intuitively, $\zeta^1_r, \ldots, \zeta^K_r$ denote the vertices of a (possibly degenerate) regular polygon with $K$ sides and side length
$3 r$; $\zeta^K_r$ denotes the vertex which is closest to origin and $\abs{\zeta^K_r} = r$.}
\end{thm}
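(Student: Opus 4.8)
The plan is to use a Lyapunov--Schmidt reduction with the ansatz that approximate solutions are of the form $\sum_k \delta_k \Phi_{\zeta_{r}^k} + w$, where $w$ lies in a suitable small orthogonal complement, and the concentration points $\zeta_r^k$ are vertices of a (possibly degenerate) regular $K$-gon of side length $3r$ with $r \approx \log\eta$. First I would set up the functional framework: write $u = \phi + q G \in H^1_\eta$ and express $S_\eta$ in these coordinates using $\norm{u}_{H^1_\eta}^2 = \norm{\phi}_{H^1}^2 + \beta_\eta(1) q^2$; crucially, since $\beta_\eta(1) = \eta + \tfrac{\gamma - \log 2}{2\pi}$ (or $\eta + \tfrac{1}{4\pi}$ for $N=3$) blows up as $\eta\to\infty$, the $G$-component is energetically expensive, which is exactly what forces $u_\eta$ to be close to a pure $H^1$-function, namely a sum of translated ground states. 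I would then establish that $\Phi_y \in H^1_\eta$ with a quantitative bound on how its $H^1_\eta$-norm and the action $S_\eta(\Phi_y)$ deviate from the $H^1$/$S_\infty$ values (the deviation coming from the $G$-projection, controlled by $G$'s $L^r$-integrability and the decay \eqref{eqn:Phi-limit} of $\Phi$).

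Next I would carry out the reduction proper. Define the finite-dimensional manifold $Z = \{\sum_k \delta_k \Phi_{\zeta_r^k} : r \in I_\eta\}$ (one-dimensional here, parametrized by $r$, since the polygon geometry is rigid), and its approximate tangent space. Using the nondegeneracy of $\Phi$ as a solution of \eqref{eqn:Kwong} (the kernel of the linearized operator $-\Delta + 1 - (p-1)\Phi^{p-2}$ on $H^1$ is spanned by $\partial_{x_i}\Phi$), I would show the linearized operator $\nabla^2 S_\eta$ at points of $Z$ is uniformly invertible on the orthogonal complement of the approximate kernel, for $\eta$ large — this requires that the peaks be far apart (distance $\sim r \sim \log\eta \to \infty$) so the interaction terms are small and a standard Fredholm/contraction argument applies. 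A contraction mapping then produces, for each $r$, a unique small $w = w_{\eta,r}$ with $\sum_k\delta_k\Phi_{\zeta_r^k} + w_{\eta,r}$ solving the equation projected onto the complement, with $\norm{w_{\eta,r}}_{H^1_\eta}$ controlled by the interaction error plus the $G$-correction error.

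The heart of the argument is then the finite-dimensional reduced problem: define $\mathcal{J}_\eta(r) := S_\eta(\sum_k\delta_k\Phi_{\zeta_r^k} + w_{\eta,r})$ and show it has a critical point $r_\eta$ with $r_\eta/\log\eta \to 1$. I would expand $\mathcal{J}_\eta(r)$ asymptotically: the leading term is $K\cdot S_\infty(\Phi)$, constant in $r$; the next-order terms are the pairwise interaction energies between adjacent peaks, which by \eqref{eqn:Phi-limit} behave like $\delta_k\delta_{k+1}\cdot(\text{const})\cdot e^{-3r}r^{-(N-1)/2}$ (attractive or repulsive according to the sign $\delta_k\delta_{k+1}$), summed with the sign condition \eqref{eqn:sign-condition} guaranteeing the net interaction is \emph{attractive} (negative); and the competing term is the $G$-repulsion, coming from the fact that $\beta_\eta(1)^{-1}|\langle \Phi_{\zeta^k_r} \mid G\rangle_{L^2}|^2$-type contributions and the energy cost of the $G$-component scale like $\eta^{-1}$ times something depending on $|\zeta_r^k|$ — with $|\zeta^K_r| = r$ the closest vertex to the origin being the relevant one, giving a term that pushes $r$ to grow. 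Balancing $e^{-3r}$ (attraction pulling peaks together, hence toward the origin) against the $\eta$-dependent repulsion from the point interaction yields the critical scale $e^{-3r_\eta} \sim \eta^{-1}$, i.e. $r_\eta \sim \tfrac{1}{3}\log\eta$; I would need to recheck the precise constant in $\zeta_r^k$ (the factor $3r$ side length, $|\zeta^K_r|=r$) to confirm it produces exactly $r_\eta/\log\eta \to 1$, which is presumably why the polygon is scaled this particular way. The restriction $p_* < p \leq 3$ (resp. $< 3$) enters here: I expect it is precisely the condition ensuring the interaction term $e^{-3r}r^{-(N-1)/2}$ dominates the error terms $\norm{w}^2$ and higher-order multi-peak interactions (which decay like $e^{-\kappa r}$ for some $\kappa$ depending on $p$ through the sub/super-quadratic behavior of $f$), so that the expansion of $\mathcal{J}_\eta'$ has a genuine sign change. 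The main obstacle will be obtaining sufficiently sharp asymptotics of $\mathcal{J}_\eta(r)$ — in particular pinning down the exact exponential rate and constant of the point-interaction repulsion term and verifying it beats all error contributions in the stated range of $p$ — since the whole existence of $r_\eta$ hinges on a delicate two-term balance rather than a crude estimate.
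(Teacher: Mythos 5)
Your overall architecture (Lyapunov--Schmidt around a multi-bump ansatz, nondegeneracy of $\Phi$, uniform invertibility of the linearization, a one-dimensional reduced functional in $r$, and a two-term balance) is the same as the paper's, but there are two concrete gaps at the heart of the argument. First, your approximate solution $\sum_k \delta_k \Phi_{\zeta_r^k}$ carries zero charge: it lies in $H^1 \subset H^1_\eta$, and since $\norm{\phi}_{H^1_\eta} = \norm{\phi}_{H^1}$ for $\phi \in H^1$, the functional $S_\eta$ restricted to such functions is just $S_\infty$. The point interaction therefore does not appear at leading order in your reduced energy at all; it would have to be extracted from the correction $w_{\eta,r}$, i.e.\ from terms of the form $\langle \nabla S_\eta(W), w\rangle + O(\norm{w}^2)$, which you do not control sharply enough to isolate a signed, $r$-dependent contribution. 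The paper avoids this by building the charge into the block, taking $\Psi_{\eta,y} = \Phi_y + \frac{\Phi(y)}{\beta_\eta(1)}G$ (the unique charge making $\Psi_{\eta,y}$ lie in $\Dom(-\Delta_\eta)$), so that the point-interaction contribution appears explicitly in $S_\eta(W_{\eta,r})$ as $-\frac{\Phi(r)^2}{2\beta_\eta(1)}$.

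Second, and consequently, your quantitative balance is wrong. The point-interaction term is not of size $\eta^{-1}$ with unspecified $r$-dependence: it is $\sim \eta^{-1}\Phi(r)^2 \sim \eta^{-1} e^{-2r} r^{-(N-1)}$, while the nearest-neighbour attraction/repulsion is $\sim \int \Phi_{3r}\Phi^{p-1} \sim e^{-3r} r^{-(N-1)/2}$ (this is exactly why the polygon has side length $3r$ and closest vertex at distance $r$). Balancing $e^{-2r}/\eta$ against $e^{-3r}$ gives $e^{-r} \sim \eta^{-1}$, i.e.\ $r_\eta \sim \log\eta$, as the theorem asserts. Your proposed balance $e^{-3r}\sim \eta^{-1}$ yields $r_\eta \sim \frac{1}{3}\log\eta$, which contradicts the conclusion $r_\eta/\log\eta \to 1$ you are trying to prove; the flag you raised about ``rechecking the constant'' is precisely where the argument breaks. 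A related sign slip: with the sign condition, the net nearest-neighbour term in the energy is $+\chi\int\Phi_{3r}\Phi^{p-1}$ with $\chi>0$, i.e.\ net repulsive among the peaks, and it competes with the negative point term $-\Phi(r)^2/(2\eta)$; the paper then locates an interior minimum of the reduced functional on the window $\frac{\eta}{\log\eta} < \Phi(r)^2\,(\int\Phi_{3r}\Phi^{p-1})^{-1} < c\eta$ by comparing boundary values with an interior test radius. Your remaining heuristics (the role of $p>p_*$ in making the error of the reduction smaller than the point-interaction effect, and $p\le 3$ for controlling the second derivative via sub-additivity of $t\mapsto t^{p-2}$) are consistent with the paper.
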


A few remarks are in order.
\begin{itemize}
\item
Due to the sign condition \eqref{eqn:sign-condition}, the theorem exclusively provides sign-changing solutions.
\item
The peaks of the solutions furnished by the theorem collapse around the origin as
$\omega \to \infty$ in the case $N = 2$. Indeed, if
$u \in \Dom \parens{- \Delta_\alpha}$ satisfies
$- \Delta_\alpha u + \omega u = f \parens{u}$, then
\[
\widetilde{u}_\omega \parens{x}
:=
\omega^{- \frac{1}{p - 2}} u \parens{\omega^{- \frac{1}{2}} x}
\]
satisfies
$
- \Delta_{\alpha_\omega} \widetilde{u}_\omega + \widetilde{u}_\omega
=
f \parens{\widetilde{u}_\omega}
$
(see Appendix \ref{sect:equivalent-problem}) and the distance from the peaks to the origin is proportional to
$\log \parens{\alpha + \frac{1}{2 \pi} \log \sqrt{\omega}}$.
\end{itemize}

To the best of our knowledge, Theorem \ref{thm} is the first result about sign-changing or multi-peak solutions to a semilinear problem involving a point interaction. We are also unaware of any other paper that used Lyapunov--Schmidt reduction in a similar context.

Let us comment about the strategy of its proof. In view of the definition of
$\norm{\cdot}_{H^1_\eta}$, the point interaction only affects functions in $H^1_\eta \setminus H^1$. As such, we have to consider a building block for pseudo-critical points of $S_\eta$ with a nonzero charge
$q \in \real$. More precisely, we adopt the \emph{building block} obtained by associating each $y \in \real^N$ with
\[
\Psi_{\eta, y}
:=
\Phi_y + \frac{\Phi \parens{y}}{\beta_\eta \parens{1}} G
\in
\Dom \parens{- \Delta_\eta}
\]
and we adopt the following associated \emph{pseudo-critical point} of $S_\eta$:
\[
W_{\eta, r}
:=
\sum_{1 \leq k \leq K}
\parens{\delta_k \Psi_{\eta, \zeta_r^k}}
\in
\Dom \parens{- \Delta_\eta}.
\]
Similarly as in \cite{kangInteractingBumpsSemiclassical2000}, there are two effects on sums of these building blocks: the effect of the point interaction and the attraction (resp., repulsion) between the peaks of the same sign (resp., of different sign) due to the power nonlinearity. The point effect over
$\Psi_{\eta, y}$ is proportional to
$\frac{\Phi \parens{y}^2}{\beta_\eta \parens{1}}$, while the effect of the nonlinearity on
$\Phi_{y_1} + \Phi_{y_2}$ is proportional to
$\Phi \parens{y_1 - y_2}$. This observation inspired the consideration of peaks disposed along a regular polygon whose side length is greater than the Euclidean norm of its vertex that is closest to the origin.

We proceed to a discussion about two technical hypotheses of the theorem. 
\begin{itemize}
\item
Our result is only valid for $p > p_*$ instead of $p > 2$ because we could not prove that the considered pseudo-critical point is a good approximation for a solution when
$2 < p \leq p_*$. Indeed, Lemma \ref{lem:pseudo-critical} shows that
\[
\norm*{\nabla S_\eta \parens{W_{\eta, r}}}_{H^1_\eta}
\lesssim
\frac
	{r^{\frac{\parens{3 - p} N + p - 1}{2 p '}}}
	{
		\eta^{
			\min
			\parens*{
				\frac{3}{p '},
				2 \parens{p - 2} + \frac{1}{p '}
			}
		}
	}
\]
and
\[
2 \parens*{2 \parens{p - 2} + \frac{1}{p '}} > 3
\quad \text{if, and only if,} \quad
p > p_*.
\]
As such, the error bound in Corollary \ref{cor:expansion} is greater than the effect of the point interaction when $2 < p \leq p_*$.
\item
The theorem only holds under the hypothesis $p \leq 3$ in dimension 2 even though $S_\eta$ is well-defined for any $p \geq 2$. We need this restriction to obtain an upper bound for the variation of $S_\eta''$ in a neighborhood of $W_{\eta, r}$. This is done in Lemma \ref{lem:estimate-second-derivative}, where we estimate a term of the form
\[
\norm*{
	\abs{W_{\eta, r} + v}^{p - 2}
	-
	\abs{W_{\eta, r}}^{p - 2}
}_{L^{p'}}
\]
exclusively in function of $v \in H^1_\eta$. The function $W_{\eta, r}$ is not bounded, so we can only obtain such an estimate when $0 < p - 2 \leq 1$, in which case
$\coi{0, \infty} \ni t \mapsto t^{p - 2}$ is sub-additive (see Item \eqref{elementary:1} in Lemma \ref{lem:elementary}), hence the restriction.
\end{itemize}

In bifurcation theory, Lyapunov--Schmidt reduction is usually used to prove the existence of bifurcation points. Let us explain how to interpret Theorem \ref{thm} from a point of view derived from this context. Given $\eta \in \oci{0, \infty}$, consider the mapping
\[
H^1_\eta \ni u
\mapsto
F \parens{\eta, u} := \nabla S_\eta \parens{u}
\in H^1_\eta,
\]
so that $F$ denotes a mapping defined on
\[
\mathcal{G}
:=
\set*{
	\parens{\eta, u} :
	\eta \in \oci{0, \infty} \quad \text{and} \quad u \in H^1_\eta
}.
\]
The set of the usual weak solutions to the NLSE
$- \Delta \phi + \phi = \phi \abs{\phi}^{p - 2}$, that is,
\[Z := \set*{\phi \in H^1 : \nabla S_\infty \parens{\phi} = 0},\]
becomes the \emph{set of trivial solutions} and Theorem \ref{thm} ensures the existence of nontrivial nodal solutions to the problem
\[F \parens{\eta, u} = 0; \quad \parens{\eta, u} \in \mathcal{G}.\]
We remark that the presently considered context does not fit the usual situation in bifurcation theory because $\mathcal{G}$ is not locally trivial in the sense of Hilbert space bundles due to the presence of $H^1 = H^1_\infty$, which is not naturally isomorphic to $H^1_\eta$ with
$0 < \eta < \infty$.

\subsection*{Acknowledgment}

\sloppy
This study was financed, in part, by the São Paulo Research Foundation (FAPESP), Brasil. Process Number \#2024/20593-0.

\section{Existence of sign-changing multi-peak solutions}
\label{sect:multi-peak}

Our present goal is to prove Theorem \ref{thm}. As such, we suppose that its hypotheses are satisfied throughout this section.

\subsection{Pseudo-critical points and admissible distances}
\label{sect:pseudo}

We are interested in solutions with $K$ peaks, so we associate each
$r > 0$ with the following \emph{pseudo-critical point} of $S_\eta$:
\[
W_{\eta, r}
:=
\sum_{1 \leq k \leq K}
\parens{\delta_k \Psi_{\eta, \zeta_r^k}}
\in
\Dom \parens{- \Delta_\eta},
\]
where we recall that
\[
\Psi_{\eta, y}
:=
\Phi_y + \frac{\Phi \parens{y}}{\beta_\eta \parens{1}} G
\in
\Dom \parens{- \Delta_\eta}
\]
for every $y \in \real^N$. Actually, we will only consider values of $r$ in a specific bounded interval. We need a few preliminaries to define this interval. More precisely, let
\begin{equation}
\label{eqn:chi}
\chi
=
-
\frac{4 - p}{2 p}
\times
\begin{cases}
\delta_1 \delta_2,
&\text{if} ~ K = 2;
\\
\delta_K \delta_1
+
\delta_1 \delta_2
+
\ldots
+
\delta_{K - 1} \delta_K,
&\text{if} ~ K \geq 3,
\end{cases}
\end{equation}
which is positive due to the sign condition \eqref{eqn:sign-condition}, and fix $c > 4 \chi$. With these quantities in mind, we define the following interval of \emph{admissible distances}:
\[
R_\eta
:=
\set*{
	r > 0:
	\frac{\eta}{\log \eta}
	<
	\Phi \parens{r}^2
	\parens*{\int \Phi_{3 r} \Phi^{p - 1} \dif \Lebesgue}^{- 1}
	<
	c \eta
}.
\]
As $\eta \to \infty$, distances in $R_\eta$ become uniformly closer to $\log \eta$ as stated more precisely in the next result.

\begin{lem}
\label{lem:asymptotic}
The limit that follows is satisfied:
\[
\sup_{r \in R_\eta} \abs*{\frac{r}{\log \eta} - 1}
\xrightarrow[\eta \to \infty]{}
0.
\]
\end{lem}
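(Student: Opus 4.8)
The plan is to reduce the claim to an asymptotic statement about the function $\Phi$ at infinity, using \eqref{eqn:Phi-limit}. Recall that $R_\eta$ consists of those $r > 0$ for which
\[
\frac{\eta}{\log \eta}
<
\Phi(r)^2
\left( \int \Phi_{3r} \Phi^{p-1} \,\dif \Lebesgue \right)^{-1}
<
c\eta.
\]
The key observation is that both $\Phi(r)^2$ and $\int \Phi_{3r}\Phi^{p-1}\,\dif\Lebesgue$ decay like explicit exponentials in $r$: by \eqref{eqn:Phi-limit}, $\Phi(r) \sim \theta_\Phi r^{-(N-1)/2} e^{-r}$, so $\Phi(r)^2 \sim \theta_\Phi^2 r^{-(N-1)} e^{-2r}$; and $\int \Phi_{3r}\Phi^{p-1}\,\dif\Lebesgue = (\Phi_{3r} * (\text{reflection of } \Phi^{p-1}))(0)$ is, up to the convolution structure and polynomial corrections, of order $r^{-(N-1)/2} e^{-3r}$ (one expects this from the general asymptotics of convolutions of exponentially decaying functions; it should already be available in the paper's later lemmas, e.g.\ the interaction estimates feeding Lemma~\ref{lem:pseudo-critical}, or can be derived directly by splitting the integral near $3r$ and using \eqref{eqn:Phi-limit}). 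Hence the middle quantity behaves like a constant times $r^{-(N-1)/2} e^{r}$, up to multiplicative factors that are $1 + o(1)$ as $r \to \infty$.

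Concretely, I would first establish that there is a function $A(r) = e^{r} r^{-(N-1)/2} \big(C + o(1)\big)$ (with $C \in (0,\infty)$ an explicit constant depending on $N$, $p$, $\theta_\Phi$) such that the middle term of the defining inequality of $R_\eta$ equals $A(r)$; this is the only analytic input and reduces to the two asymptotic facts above. Then $r \in R_\eta$ is equivalent to $\frac{\eta}{\log\eta} < A(r) < c\eta$. Since $A$ is eventually strictly increasing to $+\infty$, for $\eta$ large the set $R_\eta$ is a nonempty interval $(a_\eta, b_\eta)$, where $A(a_\eta) = \eta/\log\eta$ and $A(b_\eta) = c\eta$ (with the obvious one-sided interpretation at endpoints). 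Taking logarithms in $A(r) \asymp e^{r} r^{-(N-1)/2}$ gives $r = \log A(r) + \tfrac{N-1}{2}\log r + O(1)$; feeding this back once yields $r = \log A(r) + \tfrac{N-1}{2}\log\log A(r) + O(1)$. Applying this with $A(r) = \eta/\log\eta$ gives $a_\eta = \log\eta - \log\log\eta + \tfrac{N-1}{2}\log\log\eta + O(1) = \log\eta + O(\log\log\eta)$, and with $A(r) = c\eta$ gives $b_\eta = \log(c\eta) + O(\log\log\eta) = \log\eta + O(\log\log\eta)$ as well.

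Finally, for any $r \in R_\eta$ we have $a_\eta \le r \le b_\eta$, so
\[
\left| \frac{r}{\log\eta} - 1 \right|
\le
\max\!\left\{
\left| \frac{a_\eta}{\log\eta} - 1 \right|,
\left| \frac{b_\eta}{\log\eta} - 1 \right|
\right\}
=
O\!\left( \frac{\log\log\eta}{\log\eta} \right)
\xrightarrow[\eta\to\infty]{} 0,
\]
and the bound is uniform in $r \in R_\eta$ since it only used the endpoints; this is exactly the assertion. I expect the main obstacle to be the precise asymptotics of $\int \Phi_{3r}\Phi^{p-1}\,\dif\Lebesgue$ as $r \to \infty$: one must check that the tail of $\Phi$ (controlled by \eqref{eqn:Phi-limit}) dominates the convolution and produces the clean exponential rate $e^{-3r}$ with a nonzero constant, handling the polynomial prefactors carefully. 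Everything after that is elementary real analysis — inverting the relation $A(r) \asymp e^r r^{-(N-1)/2}$ and comparing endpoints — and the crude bound $O(\log\log\eta/\log\eta)$ is more than enough, so no sharp constants are needed.
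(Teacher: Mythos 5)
Your proposal is correct and follows essentially the same route as the paper: establish that $\Phi(r)^2\bigl(\int\Phi_{3r}\Phi^{p-1}\,\dif\Lebesgue\bigr)^{-1}$ is asymptotic to a constant times $e^{r}r^{-(N-1)/2}$ (the paper gets this from Lemmas \ref{lem:error-in-limit} and \ref{lem:interaction}, exactly the interaction estimates you anticipate), then take logarithms of the defining inequalities of $R_\eta$ and divide by $\log\eta$. The only blemish is your claim that $A$ is ``eventually strictly increasing,'' which does not follow from the asymptotic alone; but it is also unnecessary, since the paper's (and effectively your) argument applies the logarithmic inequality directly to every $r\in R_\eta$ after noting $\inf_{r\in R_\eta}r\to\infty$, with no need for $R_\eta$ to be an interval.
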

\begin{proof}
Due to Lemmas \ref{lem:error-in-limit} and \ref{lem:interaction},
\begin{equation}
\label{lem:asymptotic:1}
C \parens{r}
:=
\frac
{
	\Phi \parens{r}^2
	\parens*{
		\int
			\Phi_{3 r} \Phi^{p - 1}
	\dif \Lebesgue
	}^{- 1}
}
{
	\theta_\Phi e^r
	\parens*{\frac{3}{r}}^{\frac{N - 1}{2}}
}
\xrightarrow[r \to \infty]{}
1.
\end{equation}
It is clear that
\[
\coi{0, \infty} \ni r
\mapsto
\Phi \parens{r}^2
\parens*{
	\int
		\Phi_{3 r} \Phi^{p - 1}
	\dif \Lebesgue
}^{- 1}
\in \ooi{0, \infty}
\]
is a well-defined continuous function, so it follows from the Squeeze Theorem that
\begin{equation}
\label{lem:asymptotic:2}
\inf_{r \in R_\eta} r
\xrightarrow[\eta \to \infty]{}
\infty.
\end{equation}
Given $r \in R_\eta$, it holds that
\[
\frac{\eta}{\log \eta}
<
\theta_\Phi
e^r
\parens*{\frac{3}{r}}^{\frac{N - 1}{2}}
C \parens{r}
<
c \eta,
\]
so
\[
1 - \frac{\log \log \eta}{\log \eta}
<
\frac
{
	\log \theta_\Phi
	+
	r	
	+
	\frac{N - 1}{2} \log \frac{3}{r}
	+
	\log C \parens{r}
}
{\log \eta}
<
1 + \frac{\log c}{\log \eta}.
\]
In this situation, the result follows from \eqref{lem:asymptotic:1} and \eqref{lem:asymptotic:2}.
\end{proof}

We remark that even though the proof of Lemma \ref{lem:asymptotic} used the limit
\[
\abs*{\Phi \parens{r} - \theta_\Phi \frac{e^{- \abs{r}}}{r^{\frac{N - 1}{2}}}}
\xrightarrow[r \to \infty]{}
0,
\]
the set $R_\eta$ is not defined using this approximation for the behavior of $\Phi$. This is because the error associated with this limit is larger than the effect of the point interaction in dimension 3. More precisely, the effect of the point interation is of order
\[
\frac{\Phi \parens{r}^2}{\eta}
\simeq
\frac{e^{- 2 r}}{\eta r^{N - 1}}
\simeq
\frac{1}{\eta^3 r^{N - 1}}
\]
(see Lemma \ref{lem:expansion}), while the aforementioned error to estimate
$
\int
	\Phi_{3 r} \Phi^{p - 1}
\dif \Lebesgue
$
is of order
\[
\frac{e^{- 3 r}}{r^{\frac{N + 1 - \eps}{2}}}
\simeq
\frac{1}{\eta^3 r^{\frac{N + 1 - \eps}{2}}}
\]
(see Lemma \ref{lem:interaction}).

The function $W_{\eta, r}$ is said to be a pseudo-critical point of $S_\eta$ due to the following result.

\begin{lem}
\label{lem:pseudo-critical}
There exists $\eta_0 > 1$ such that
\[
\norm*{\nabla S_\eta \parens{W_{\eta, r}}}_{H^1_\eta}
\lesssim
\frac
	{r^{\frac{\parens{3 - p} N + p - 1}{2 p '}}}
	{
		\eta^{
			\min
			\parens*{
				\frac{3}{p '},
				2 \parens{p - 2} + \frac{1}{p '}
			}
		}
	}
\]
for every $\eta \geq \eta_0$ and $r \in R_\eta$.
\end{lem}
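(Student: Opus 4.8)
The plan is to estimate the $H^1_\eta$-Riesz representative of $S_\eta'\parens{W_{\eta,r}}$ by duality and reduce everything to an $L^{p'}$ bound on a single nonlinear error function. Since each $\Psi_{\eta,\zeta_r^k}$ lies in $\Dom\parens{-\Delta_\eta}$, \eqref{eqn:trick} together with the equation $\parens{-\Delta+1}\Phi=\Phi^{p-1}$ satisfied by the ground state of \eqref{eqn:Kwong} gives $\parens{-\Delta_\eta+1}W_{\eta,r}=\parens{-\Delta+1}\sum_k\delta_k\Phi_{\zeta_r^k}=\sum_k\delta_k\Phi_{\zeta_r^k}^{p-1}$, hence $\angles*{W_{\eta,r}\mid v}_{H^1_\eta}=\int\parens*{\sum_k\delta_k\Phi_{\zeta_r^k}^{p-1}}v\,\dif\Lebesgue$ for every $v\in H^1_\eta$. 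Therefore $S_\eta'\parens{W_{\eta,r}}[v]=\int\parens*{\sum_k\delta_k\Phi_{\zeta_r^k}^{p-1}-f\parens{W_{\eta,r}}}v\,\dif\Lebesgue$, and Hölder's inequality together with the embedding \eqref{eqn:Sobolev-embedding} — whose constant may be taken uniform for $\eta$ bounded away from $0$ — yields
\[
\norm*{\nabla S_\eta\parens{W_{\eta,r}}}_{H^1_\eta}
\lesssim
\norm*{\sum_{1\le k\le K}\delta_k\Phi_{\zeta_r^k}^{p-1}-f\parens{W_{\eta,r}}}_{L^{p'}}.
\]
The right-hand side is finite precisely because $p\le 3$ (resp.\ $p<3$ when $N=3$) forces $G\in L^p$, so that $f\parens{W_{\eta,r}}\in L^{p'}$ even though $W_{\eta,r}$ is unbounded near the origin.

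Writing $W_{\eta,r}=W^\flat+q_{\eta,r}G$ with $W^\flat:=\sum_k\delta_k\Phi_{\zeta_r^k}$ and $q_{\eta,r}:=\beta_\eta\parens{1}^{-1}\sum_k\delta_k\Phi\parens{\zeta_r^k}$, and using $\delta_k\Phi_{\zeta_r^k}^{p-1}=f\parens{\delta_k\Phi_{\zeta_r^k}}$, I would split the error as $\parens*{\sum_k f\parens{\delta_k\Phi_{\zeta_r^k}}-f\parens{W^\flat}}+\parens*{f\parens{W^\flat}-f\parens{W^\flat+q_{\eta,r}G}}$. For the first (peak--peak) summand, the vertices $\zeta_r^1,\dots,\zeta_r^K$ of the regular $K$-gon of side $3r$ satisfy $\abs{\zeta_r^j-\zeta_r^k}\ge 3r$ for $j\ne k$; the elementary estimate $\abs*{\sum_k f\parens{a_k}-f\parens*{\sum_k a_k}}\lesssim\sum_{j\ne k}\abs{a_j}\abs{a_k}^{p-2}$, which relies on $t\mapsto t^{p-2}$ being sub-additive, i.e.\ on $p\le 3$ (Item \eqref{elementary:1} of Lemma \ref{lem:elementary}), then bounds the $L^{p'}$ norm of this summand by $\sum_{j\ne k}\norm{\Phi_{\zeta_r^j}\Phi_{\zeta_r^k}^{p-2}}_{L^{p'}}$, each such integral being controlled by a computable negative power of $e^{r}$ times a power of $r$ via the asymptotics \eqref{eqn:Phi-limit} of $\Phi$ and Lemma \ref{lem:interaction}.

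For the second (charge) summand, observe that $\abs{\zeta_r^k}\ge\abs{\zeta_r^K}=r$ for every $k$ and $\Phi$ is radially decreasing, so $\abs{q_{\eta,r}}\lesssim\beta_\eta\parens{1}^{-1}\Phi\parens{r}$, the vertex nearest the origin dominating. The pointwise inequality $\abs{f\parens{a+b}-f\parens{a}}\lesssim\abs{a}^{p-2}\abs{b}+\abs{b}^{p-1}$ (again valid for $p\le 3$) bounds the $L^{p'}$ norm of this summand by $\abs{q_{\eta,r}}\norm{\abs{W^\flat}^{p-2}G}_{L^{p'}}+\abs{q_{\eta,r}}^{p-1}\norm{G}_{L^p}^{p-1}$. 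As all vertices lie at distance $\ge r$ from the origin, $\abs{W^\flat}\lesssim\Phi\parens{r}$ on a fixed ball about $0$, on which $G$ is singular but $L^{p'}$, while $G$ decays exponentially away from $0$; combined with the decay of $\Phi$ this gives $\norm{\abs{W^\flat}^{p-2}G}_{L^{p'}}\lesssim\Phi\parens{r}^{p-2}$ up to a power of $r$, so that this summand too decays like a computable negative power of $e^{r}$ times a power of $r$.

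Finally, on $R_\eta$ one has $r=\parens{1+o\parens{1}}\log\eta$ by Lemma \ref{lem:asymptotic} (equivalently, by the defining two-sided bound of $R_\eta$ together with Lemma \ref{lem:interaction}), so every factor $e^{-cr}$ becomes the corresponding power $\eta^{-c}$ and every power of $r$ that arises is absorbed into the power $r^{\parens*{\parens{3-p}N+p-1}/2p'}$ in the statement; keeping in each range of $p$ the larger of the two contributions produces exactly the exponent $\min\parens*{\tfrac{3}{p'},\,2\parens{p-2}+\tfrac{1}{p'}}$ of $\eta^{-1}$, for all $\eta$ past some $\eta_0>1$ and all $r\in R_\eta$. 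The substantive part — and the main obstacle — is the region-by-region asymptotic analysis of the convolution-type integrals $\norm{\Phi_{\zeta_r^j}\Phi_{\zeta_r^k}^{p-2}}_{L^{p'}}$, $\norm{\abs{W^\flat}^{p-2}G}_{L^{p'}}$ and the local $L^p$ norms of $G$: in each region — near the origin, near each peak, and along the segments joining them — one must first determine which competing exponential rate dominates, and only then combine with the relation $r\asymp\log\eta$.
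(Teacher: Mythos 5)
Your strategy coincides with the paper's: reduce by duality to an $L^{p'}$ bound on $\sum_k\delta_k\Phi_{\zeta_r^k}^{p-1}-f\parens{W_{\eta,r}}$, split into a peak--peak error and a charge error, control each through the sub-additivity of $t\mapsto t^{p-2}$ and the exponential decay of $\Phi$ and $G$, and convert $e^{-cr}$ into $\eta^{-c}$ via $r\sim\log\eta$. The gap is that the quantitative core of the lemma --- the two specific exponents $\tfrac{3}{p'}$ and $2\parens{p-2}+\tfrac{1}{p'}$, which are precisely what the definition of $p_*$ and hence the whole theorem hinge on --- is asserted rather than derived: you state that the region-by-region analysis ``produces exactly'' the claimed minimum, but you never exhibit which term produces which exponent. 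In the paper these arise as follows: the peak--peak term gives $\norm{\Phi_{\zeta_r^{k_1}}\Phi_{\zeta_r^{k_2}}^{p-2}}_{L^{p'}}^{p'}\lesssim r^{\ast}\eta^{-3}$ via Corollary \ref{cor:entire-space} applied with $\abs{\zeta_r^{k_1}-\zeta_r^{k_2}}\geq 3r$ (Lemma \ref{lem:interaction}, which you invoke here, concerns $\int\Phi_y\Phi^{p-1}\dif\Lebesgue$ and is not the relevant tool); the exponent $2\parens{p-2}+\tfrac{1}{p'}$ comes from bounding $\abs*{\abs{W^\flat}^{p-2}-\abs{W_{\eta,r}}^{p-2}}\leq\abs{q_{\eta,r}G}^{p-2}$ and multiplying against $\Phi_{\zeta_r^{k_1}}$, i.e.\ from the factor $\parens{\Phi\parens{r}/\beta_\eta\parens{1}}^{p-2}\simeq\eta^{-2\parens{p-2}}$ times $\norm{\Phi_{\zeta_r^{k}}G^{p-2}}_{L^{p'}}\lesssim r^{\ast}\eta^{-1/p'}$.

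Moreover, your handling of the charge term via $\abs{f\parens{a+b}-f\parens{a}}\lesssim\abs{a}^{p-2}\abs{b}+\abs{b}^{p-1}$ distributes the powers differently from the paper: it leads to $\abs{q_{\eta,r}}\,\norm{\abs{W^\flat}^{p-2}G}_{L^{p'}}+\abs{q_{\eta,r}}^{p-1}\norm{G}_{L^p}^{p-1}$, and since $\abs{q_{\eta,r}}\lesssim\Phi\parens{r}/\beta_\eta\parens{1}\simeq\eta^{-2}$ on $R_\eta$, these contributions decay roughly like $\eta^{-p}$ and $\eta^{-2\parens{p-1}}$, not like $\eta^{-\parens{2\parens{p-2}+1/p'}}$. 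A stronger bound would of course still imply the lemma, so your decomposition is not wrong, but the assertion that it ``produces exactly the exponent $\min\parens{\tfrac{3}{p'},\,2\parens{p-2}+\tfrac{1}{p'}}$'' is not what your splitting yields, and nothing in the proposal actually certifies any power of $\eta$: the asymptotic evaluation of $\norm{\Phi_{\zeta_r^j}\Phi_{\zeta_r^k}^{p-2}}_{L^{p'}}$ and of the charge integrals, which you defer as ``the main obstacle,'' is the entire content of the lemma. As written the proof is therefore incomplete.
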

\begin{proof}
In view of \eqref{eqn:trick},
\begin{equation}
\label{pseudo-critical:0.05}
\angles*{
	\nabla S_\eta \parens{W_{\eta, r}}
	~ \middle| ~
	u
}_{H^1_\eta}
=
\eqref{pseudo-critical:0.1}
-
\eqref{pseudo-critical:0.2},
\end{equation}
where
\begin{equation}
\label{pseudo-critical:0.1}
\sum_{1 \leq k \leq K}
\parens*{
	\delta_k
	\angles{
		-
		\Delta \Phi_{\zeta_r^k}
		+
		\Phi_{\zeta_r^k}
		\mid
		u
	}_{L^2}
}
\end{equation}
and
\begin{equation}
\label{pseudo-critical:0.2}
\int f \parens{W_{\eta, r}} u \dif \Lebesgue.
\end{equation}
On one hand, the identity
$- \Delta \Phi + \Phi = \Phi^{p - 1}$
shows that
\begin{equation}
\label{pseudo-critical:0.3}
\eqref{pseudo-critical:0.1}
=
\int
	\sum_{1 \leq k_1 \leq K}
	\parens{
		\delta_{k_1}
		\Phi_{\zeta_r^{k_1}}^{p - 1}
	}
	u
\dif \Lebesgue.
\end{equation}
On the other hand, the equality
\[
W_{\eta, r}
=
\sum_{1 \leq k_1 \leq K}
\parens*{
	\delta_{k_1}
	\parens*{
		\Phi_{\zeta_r^{k_1}}
		+
		\frac
		{\Phi \parens{\zeta_r^{k_1}}}
		{\beta_\eta \parens{1}}
		G
	}
}
\]
implies
\begin{multline}
\label{pseudo-critical:0.4}
\eqref{pseudo-critical:0.2}
=
\int
	\sum_{1 \leq k_1 \leq K}
	\parens*{
		\delta_{k_1} \Phi_{\zeta_r^{k_1}}
		\abs{W_{\eta, r}}^{p - 2}
	}
	u
\dif \Lebesgue
+
\\
+
\parens*{
	\sum_{1 \leq k_1 \leq K}
	\delta_{k_1}
	\frac
		{\Phi \parens{\zeta_r^{k_1}}}
		{\beta_\eta \parens{1}}
}
\int
	G \abs{W_{\eta, r}}^{p - 2} u
\dif \Lebesgue.
\end{multline}
Due to \eqref{pseudo-critical:0.05}, \eqref{pseudo-critical:0.3} and \eqref{pseudo-critical:0.4}, we obtain
\begin{multline*}
\angles*{
	\nabla S_\eta \parens{W_{\eta, r}}
	~ \middle| ~
	u
}_{H^1_\eta}
=
\int
	\sum_{1 \leq k_1 \leq K}
	\parens*{
		\delta_{k_1}
		\Phi_{\zeta_r^{k_1}}
		\parens*{
			\Phi_{\zeta_r^{k_1}}^{p - 2}
			-
			\abs{W_{\eta, r}}^{p - 2}
		}
	}
	u
\dif \Lebesgue
+
\\
-
\parens*{
	\sum_{1 \leq k_1 \leq K}
	\delta_{k_1}
	\frac
		{\Phi \parens{\zeta_r^{k_1}}}
		{\beta_\eta \parens{1}}
}
\int
	G \abs{W_{\eta, r}}^{p - 2} u
\dif \Lebesgue.
\end{multline*}
By summing and subtracting
\[
\int
\sum_{1 \leq k_1 \leq K}
	\parens*{
		\delta_{k_1} \Phi_{\zeta_r^{k_1}}
		\abs*{
			\sum_{1 \leq k_2 \leq K} 
				\parens{
					\delta_{k_2}
					\Phi_{\zeta_r^{k_2}}
				}
		}^{p - 2}
	}
u
\dif \Lebesgue,
\]
we deduce that
\begin{multline*}
\angles*{\nabla S_\eta \parens{W_{\eta, r}} ~ \middle| ~ u}_{H^1_\eta}
=
\\
=
\int
	\sum_{1 \leq k_1 \leq K}
		\parens*{
			\delta_{k_1} \Phi_{\zeta_r^{k_1}}
			\parens*{
				\Phi_{\zeta_r^{k_1}}^{p - 2}
				-
				\abs*{\sum_{1 \leq k_2 \leq K} \delta_{k_2} \Phi_{\zeta_r^{k_2}}}^{p - 2}
			}
		}
	u
\dif \Lebesgue
+
\\
+
\int
	\sum_{1 \leq k_1 \leq K}
		\parens*{
		\delta_{k_1} \Phi_{\zeta_r^{k_1}}
			\parens*{
				\abs*{\sum_{1 \leq k_2 \leq K} \delta_{k_2} \Phi_{\zeta_r^{k_2}}}^{p - 2}
				-
				\abs{W_{\eta, r}}^{p - 2}
			}
		}
	u
\dif \Lebesgue
+
\\
-
\parens*{
	\sum_{1 \leq k_1 \leq K}
	\delta_{k_1}
	\frac{\Phi \parens{\zeta_r^{k_1}}}{\beta_\eta \parens{1}}
}
\int G \abs{W_{\eta, r}}^{p - 2} u \dif \Lebesgue.
\end{multline*}
As such, we only have to estimate
\begin{equation}
\label{pseudo-critical:1}
\sum_{1 \leq k_1 \leq K}
\norm*{
	\Phi_{\zeta_r^{k_1}}
	\parens*{
		\Phi_{\zeta_r^{k_1}}^{p - 2}
		-
		\abs*{\sum_{1 \leq k_2 \leq K} \delta_{k_2} \Phi_{\zeta_r^{k_2}}}^{p - 2}
	}
}_{L^{p'}},
\end{equation}
\begin{equation}
\label{pseudo-critical:2}
\sum_{1 \leq k_1 \leq K}
\norm*{
	\Phi_{\zeta_r^{k_1}}
	\parens*{
		\abs*{\sum_{1 \leq k_2 \leq K} \delta_{k_2} \Phi_{\zeta_r^{k_2}}}^{p - 2}
		-
		\abs{W_{\eta, r}}^{p - 2}
	}
}_{L^{p'}}
\end{equation}
and
\begin{equation}
\label{pseudo-critical:3}
\frac{\Phi \parens{r}}{\eta}
\norm*{G \abs{W_{\eta, r}}^{p - 2}}_{L^{p '}}.
\end{equation}

\paragraph{Estimation of
$\eqref{pseudo-critical:1}$.} In view of Item \eqref{elementary:1} in Lemma \ref{lem:elementary},
\[
\abs*{
	\Phi_{\zeta^{k_1}_r}^{p - 2}
	-
	\abs*{\sum_{1 \leq k_2 \leq K} \delta_{k_2} \Phi_{\zeta^{k_2}_r}}^{p - 2}
}
\leq
\sum_{\substack{1 \leq k_2 \leq K; \\ k_2 \neq k_1}} \Phi_{\zeta^{k_2}_r}^{p - 2}.
\]
In view of Corollary \ref{cor:entire-space}, there exists $\eta_0 > 1$ such that
\[
\sum_{\substack{1 \leq k_2 \leq K; \\ k_2 \neq k_1}}
	\norm*{\Phi_{\parens{\zeta^{k_1}_r - \zeta^{k_2}_r}} \Phi^{p - 2}}_{L^{p '}}^{p '}
\lesssim
\frac
	{r^{\frac{\parens{3 - p} N + p - 1}{2}}}
	{\eta^3},
\]
and so
$
\eqref{pseudo-critical:1}
\lesssim
r^{\frac{\parens{3 - p} N + p - 1}{2 p '}}
\eta^{- \frac{3}{p '}}
$
for every $\eta \geq \eta_0$ and $r \in R_\eta$.

\paragraph{Estimation of $\eqref{pseudo-critical:2}$.}
Analogously,
\begin{align*}
\abs*{
	\abs*{\sum_{1 \leq k \leq K} \delta_k \Phi_{\zeta^k_r}}^{p - 2}
	-
	\abs{W_{\eta, r}}^{p - 2}
}
&\leq
\sum_{1 \leq k \leq K}
	\parens*{\frac{\Phi \parens{\zeta^k_r}}{\beta_\eta \parens{1}}}^{p - 2}
	G^{p - 2}
;
\\
&\leq
K
\parens*{
	\frac{\Phi \parens{r}}{\beta_\eta \parens{1}}
}^{p - 2}
G^{p - 2}
.
\end{align*}
Therefore,
\[
\eqref{pseudo-critical:2}
\leq
K
\parens*{
	\frac{\Phi \parens{r}}{\beta_\eta \parens{1}}
}^{p - 2}
\parens*{
	\sum_{1 \leq k \leq K}
	\norm{\Phi_{\zeta^k_r} G^{p - 2}}_{L^{p '}}
}.
\]
Due to Corollary \ref{cor:entire-space}, there exists $\eta_0 > 1$ for which
\[
\norm{\Phi_{\zeta^k_r} G^{p - 2}}_{L^{p '}}^{p '}
\lesssim
\frac
	{r^{\frac{\parens{3 - p} N + p - 1}{2}}}
	{\eta},
\]
and so
\[
\eqref{pseudo-critical:2}
\lesssim
\frac{1}
	{
		r^{
			\frac
				{\parens{p - 2} \parens{N - 1}}
				{2}
		}
		\eta^{2 \parens{p - 2}}
	}
\times
\frac
	{r^{\frac{\parens{3 - p} N + p - 1}{2 p '}}}
	{
		\eta^{\frac{1}{p '}}
	}
\]
for every $\eta \geq \eta_0$ and $r \in R_\eta$.

\paragraph{Estimation of $\eqref{pseudo-critical:3}$.}
It follows from the definition of $W_{\eta, r}$ that
\[
\norm*{
	G \abs{W_{\eta, r}}^{p - 2}
}_{L^{p '}}
=
\norm*{
	G
	\abs*{
		\sum_{1 \leq k \leq K}
		\parens*{
			\delta_k \Phi_{\zeta^k_r}
			+
			\delta_k
			\frac
				{\Phi \parens{\zeta^k_r}}
				{\beta_\eta \parens{1}}
			G
		}
	}^{p - 2}
}_{L^{p '}}.
\]
In view of Item \eqref{elementary:1} in Lemma \ref{lem:elementary},
\begin{align*}
\norm*{
	G \abs{W_{\eta, r}}^{p - 2}
}_{L^{p '}}
&\leq
\sum_{1 \leq k \leq K}
\norm*{
	G
	\parens*{
		\Phi_{\zeta^k_r}^{p - 2}
		+
		\parens*{
			\frac
				{\Phi \parens{\zeta^k_r}}
				{\beta_\eta \parens{1}}
		}^{p - 2}
		G^{p - 2}
	}
}_{L^{p '}};
\\
&\leq
\sum_{1 \leq k \leq K}
\parens*{
	\norm*{
		G
		\Phi_{\zeta^k_r}^{p - 2}
	}_{L^{p '}}
	+
	\parens*{
		\frac
			{\Phi \parens{\zeta^k_r}}
			{\beta_\eta \parens{1}}
	}^{p - 2}
	\norm{G}^{p - 1}_{L^p}
}.
\end{align*}
Therefore,
\[
\eqref{pseudo-critical:3}
\lesssim
\frac{1}{r^{\frac{N - 1}{2}} \eta^2}
\times
\frac
	{r^{\frac{\parens{3 - p} N + p - 1}{2 p '}}}
	{
		\eta^{\frac{1}{p '}}
	}
+
\frac{1}
	{
		r^{
			\frac
				{\parens{p - 1} \parens{N - 1}}
				{2}
		}
		\eta^{2 \parens{p - 1}}
	}
\]
for every $\eta \geq \eta_0$ and $r \in R_\eta$.
\end{proof}

\subsection{An ansatz for the solutions, the auxiliary and bifurcation equations}

Our strategy consists in looking for solutions to the critical point equation
\[\nabla S_\eta \parens{u} = 0; \quad u \in H^1_\eta\]
with an \textit{a priori} known form. That is, we will actually solve
\begin{equation}
\label{eqn:critical-point-equation}
\nabla S_\eta \parens{W_{\eta, r} + \nu} = 0;
\quad
\nu \in \N_{\eta, r},
\end{equation}
where
\begin{multline*}
\N_{\eta, r}
:=
\left\{
	\phi + q G:
	\phi \in H^1; \quad
	q \in \real
	\quad \text{and} \quad
	\angles{\phi \mid \partial_i \Phi_{\zeta_r^k}}_{H^1} = 0
\right.
\\
\left.
	\text{for every} \quad
	\parens{i, k} \in \set{1, \ldots, N} \times \set{1, \ldots, K}
\right\}
\subset
H^1_\eta.
\end{multline*}
Instead of directly trying to solve \eqref{eqn:critical-point-equation}, we will consider its orthogonal decomposition that follows:
\begin{equation}
\label{eqn:system}
\begin{cases}
\Pi_{\eta, r} \brackets{\nabla S_\eta \parens{W_{\eta, r} + \nu}}
=
0
&\text{\emph{(auxiliary equation)}};
\\
\parens{\mathrm{id}_{H^1_\eta} - \Pi_{\eta, r}}
\brackets{\nabla S_\eta \parens{W_{\eta, r} + \nu}}
=
0
&\text{\emph{(bifurcation equation)}};
\\
\nu \in \N_{\eta, r},
\end{cases}
\end{equation}
where $\Pi_{\eta, r} \colon H^1_\eta \to \N_{\eta, r}$ denotes the corresponding $H^1_\eta$-orthogonal projection.

\subsection{Solving the auxiliary equation}
\label{sect:auxiliary}

In this section, we accomplish the first step to solve \eqref{eqn:system}, which consists in showing that if $\eta$ is sufficiently large, then we can associate each
$r \in R_\eta$ with a solution to the corresponding auxiliary equation as stated in the result that follows.

\begin{lem}
\label{lem:implicit-function}
There exists $\eta_0 > 1$ such that we can associate every $\eta \geq \eta_0$ with a mapping of class $C^1$
\[
R_\eta \ni r \mapsto \nu_{\eta, r} \in H^1_\eta
\]
such that $\nu_{\eta, r} \in \N_{\eta, r}$ and
\[
\Pi_{\eta, r}
\brackets*{
	\nabla S_\eta \parens{W_{\eta, r}
	+
	\nu_{\eta, r}}
}
=
0
\]
for every $r \in R_\eta$. Moreover, the so-obtained family of mappings is such that
\[
\norm{\nu_{\eta, r}}_{H^1_\eta}
\lesssim
\norm*{\nabla S_\eta \parens{W_{\eta, r}}}_{H^1_\eta}
\]
for every $\eta \geq \eta_0$ and $r \in R_\eta$.
\end{lem}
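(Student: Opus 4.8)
The plan is to solve the auxiliary equation by a quantitative implicit function theorem (or equivalently a contraction-mapping / Newton-type argument) around the pseudo-critical point $W_{\eta, r}$, treating $\eta$ as a large parameter and $r \in R_\eta$ as a variable ranging over a bounded set. Write the auxiliary equation as $\G_{\eta, r}(\nu) := \Pi_{\eta, r}\brackets{\nabla S_\eta(W_{\eta, r} + \nu)} = 0$ with $\nu \in \N_{\eta, r}$, and decompose it as the fixed-point problem
\[
\nu
=
- L_{\eta, r}^{-1}
\Pi_{\eta, r}
\brackets*{
	\nabla S_\eta(W_{\eta, r})
	+
	\parens*{
		\nabla S_\eta(W_{\eta, r} + \nu)
		-
		\nabla S_\eta(W_{\eta, r})
		-
		S_\eta''(W_{\eta, r})[\nu]
	}
},
\]
where $L_{\eta, r} := \Pi_{\eta, r} \circ S_\eta''(W_{\eta, r})|_{\N_{\eta, r}} \colon \N_{\eta, r} \to \N_{\eta, r}$ is the linearized operator restricted to the orthogonal complement of the approximate kernel. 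The three ingredients needed are: (i) uniform invertibility of $L_{\eta, r}$, i.e.\ $\norm{L_{\eta, r}^{-1}} \lesssim 1$ for $\eta$ large and $r \in R_\eta$; (ii) the smallness of the ``residual'' $\Pi_{\eta, r}\brackets{\nabla S_\eta(W_{\eta, r})}$, which is exactly Lemma \ref{lem:pseudo-critical} (the projection is norm-decreasing, so $\norm{\Pi_{\eta, r}\brackets{\nabla S_\eta(W_{\eta, r})}}_{H^1_\eta} \leq \norm{\nabla S_\eta(W_{\eta, r})}_{H^1_\eta}$); and (iii) a Lipschitz-type control of the quadratic remainder, namely $\norm{S_\eta''(W_{\eta, r} + v) - S_\eta''(W_{\eta, r})} \lesssim \omega(\norm{v}_{H^1_\eta})$ with $\omega(t) \to 0$ as $t \to 0$, uniformly in $\eta, r$, which is the content of Lemma \ref{lem:estimate-second-derivative} (this is why the hypothesis $p \leq 3$ enters). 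Given (i)--(iii), the map on the right-hand side is a contraction on a ball of radius $\rho_\eta := C \norm{\nabla S_\eta(W_{\eta, r})}_{H^1_\eta}$ in $\N_{\eta, r}$ for $\eta$ large enough, since $\rho_\eta \to 0$; Banach's fixed point theorem then yields a unique $\nu_{\eta, r}$ in that ball with $\norm{\nu_{\eta, r}}_{H^1_\eta} \leq \rho_\eta \lesssim \norm{\nabla S_\eta(W_{\eta, r})}_{H^1_\eta}$, which is the claimed estimate.

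For the $C^1$-dependence on $r$, I would apply the implicit function theorem to the map $(r, \nu) \mapsto \G_{\eta, r}(\nu)$, which requires the auxiliary equation to be phrased on a fixed space; the subtlety is that $\N_{\eta, r}$ varies with $r$. I would handle this in the standard way: parametrize $\N_{\eta, r}$ by a fixed model space (say $\N_{\eta, r_0}$, or the space $\set{\phi + qG : \phi \in H^1, \angles{\phi \mid \partial_i \Phi}_{H^1} = 0\ \forall i, q \in \real}$ pulled back by translations) via an $r$-dependent isomorphism depending smoothly on $r$ — the translations $y \mapsto \Phi_y$ are smooth in $H^1$ and the orthogonality conditions defining $\N_{\eta, r}$ depend smoothly on the finitely many vectors $\partial_i \Phi_{\zeta_r^k}$, which move $C^1$-smoothly with $r$. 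After this reduction, the partial derivative of $\G_{\eta, r}$ in $\nu$ at the solution is a small perturbation of $L_{\eta, r}$ (again by Lemma \ref{lem:estimate-second-derivative} and $\norm{\nu_{\eta, r}} \to 0$), hence invertible, so the implicit function theorem applies and gives the $C^1$ regularity of $r \mapsto \nu_{\eta, r}$ together with uniqueness in a neighborhood, which patches with the fixed-point uniqueness.

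The main obstacle is step (i): the uniform invertibility of $L_{\eta, r}$ on $\N_{\eta, r}$. The operator $S_\infty''(\Phi_y)$ has a kernel spanned by the translations $\partial_i \Phi_y$, so one must show that once these directions are projected out (which is precisely what the orthogonality conditions in $\N_{\eta, r}$ do), the operator is uniformly bounded below; the danger is a loss of coercivity coming either from the interaction between the $K$ well-separated peaks or from the rank-one point-interaction term $\propto G$ in the $H^1_\eta$-inner product. I expect this to follow from the non-degeneracy of $\Phi$ (Kwong's uniqueness result plus the standard fact that $\ker S_\infty''(\Phi) = \vspan\set{\partial_i \Phi}$), combined with the fact that the peaks sit at mutual distance $\gtrsim \log \eta \to \infty$ so the cross terms between distinct $\Phi_{\zeta_r^k}$ are exponentially small, and the fact that the charge $q$ contributes a positive-definite quadratic term $\beta_\eta(1) q^2$ with $\beta_\eta(1) \to \infty$; the coupling between the $q G$ part and the $H^1$ part in $S_\eta''(W_{\eta, r})$ is controlled by $\norm{G}_{L^r}$-type quantities divided by $\beta_\eta(1)$ and hence negligible. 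Assembling these into a clean ``$\norm{L_{\eta, r}^{-1}} \lesssim 1$'' — presumably via a contradiction/compactness argument à la splitting a hypothetical almost-null sequence into its behavior near each peak plus a remainder — is the technical heart of the lemma.
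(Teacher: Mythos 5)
Your plan coincides with the paper's: the paper proves exactly your three ingredients as separate lemmas (uniform invertibility of $L_{\eta,r}$ via a Lax--Milgram-type decomposition of $\N_{\eta,r}$ into a negative-definite finite-dimensional part and a coercive complement, the residual bound of Lemma \ref{lem:pseudo-critical}, and the $\norm{u}^{p-2}$ control on the variation of $S_\eta''$) and then invokes the same contraction/implicit-function argument on the ball of radius $2\bar{C}\norm{\nabla S_\eta(W_{\eta,r})}_{H^1_\eta}$, citing classical references for the $C^1$ dependence on $r$. The only cosmetic difference is that you guess a contradiction/compactness route for the uniform inversion, whereas the paper argues directly via the splitting $\N_{\eta,r}=A_r\oplus B_{\eta,r}$ and the corollary of Lax--Milgram; this does not affect the structure of the proof of the lemma itself.
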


The proof of the lemma is not interesting by itself, but it depends on a few technical results whose validity is not obvious in the present context (Lemmas \ref{lem:uniform-inversion}, \ref{lem:estimate-second-derivative} and \ref{lem:fixed-point}), so we develop them in what follows. More precisely, Lemma \ref{lem:implicit-function} is proved with classical arguments as in \cite[Proof of Proposition 8.7]{ambrosettiPerturbationMethodsSemilinear2006} or \cite[Proof of Lemma 3.3]{depaularamosClusterSemiclassicalStates2023}.

\subsubsection{Uniform inversion of $L_{\eta, r}$}

The goal of this section is to prove the result that follows, where
$
L_{\eta, r}
\colon
\N_{\eta, r} \to \N_{\eta, r}
$
denotes the linear mapping defined as
\[
L_{\eta, r} \brackets{\nu}
=
\Pi_{\eta, r} \circ I_\eta^{- 1} \brackets*{
	S_\eta'' \parens{W_{\eta, r}} \brackets{\nu, \cdot}
}
\]
and $I_\eta \colon H^1_\eta \to H^{- 1}_\eta$ denotes the canonical linear isometry.

\begin{lem}
\label{lem:uniform-inversion}
There exist $\eta_0 > 1$ and $\bar{C} > 0$ such that
\begin{center}
\begin{enumerate*}
\item the linear operator
$L_{\eta, r}$ is invertible;
\quad \quad
\item
$\norm{L_{\eta, r}^{-1}} \leq \bar{C}$
\end{enumerate*}
\end{center}
for every $\eta \geq \eta_0$ and $r \in R_\eta$.
\end{lem}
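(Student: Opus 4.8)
The strategy is the standard one for Lyapunov--Schmidt reductions: show that $L_{\eta,r}$ is a small compact perturbation of an operator that is uniformly invertible on the relevant subspace, so that invertibility with a uniform bound on the inverse is inherited for $\eta$ large. Concretely, write $S_\eta''(W_{\eta,r})[\nu,\cdot] = \langle \nu \mid \cdot\rangle_{H^1_\eta} - (p-1)\int |W_{\eta,r}|^{p-2}\nu\,(\cdot)\,\dif\Lebesgue$, so that, after applying $I_\eta^{-1}$ and $\Pi_{\eta,r}$, one gets $L_{\eta,r} = \mathrm{id}_{\N_{\eta,r}} - K_{\eta,r}$, where $K_{\eta,r}[\nu] := (p-1)\,\Pi_{\eta,r}\circ I_\eta^{-1}\brackets{\int |W_{\eta,r}|^{p-2}\nu\,(\cdot)\,\dif\Lebesgue}$. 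The point is to compare $K_{\eta,r}$ with the analogous operator built from a single bump $\Phi$, whose kernel structure on the orthogonal complement of $\vspan\set{\partial_i\Phi}$ is well understood: by nondegeneracy of $\Phi$ (Kwong's uniqueness plus the classical spectral analysis of the linearized operator $-\Delta + 1 - (p-1)\Phi^{p-2}$), the operator $\mathrm{id} - (p-1)(-\Delta+1)^{-1}[\Phi^{p-2}\,\cdot\,]$ restricted to $\{\phi \in H^1 : \langle\phi\mid\partial_i\Phi\rangle_{H^1}=0\}$ is invertible.

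The key steps, in order, are: (1) establish the algebraic identity $L_{\eta,r} = \mathrm{id}_{\N_{\eta,r}} - K_{\eta,r}$ and record that $K_{\eta,r}$ is compact (it factors through multiplication by $|W_{\eta,r}|^{p-2} \in L^{p'/(p-2)}$ composed with a Sobolev embedding, hence is a composition of bounded maps with a compact one, using \eqref{eqn:Sobolev-embedding}); (2) argue by contradiction: suppose the conclusion fails, so there are $\eta_n \to \infty$, $r_n \in R_{\eta_n}$ and unit vectors $\nu_n \in \N_{\eta_n,r_n}$ with $L_{\eta_n,r_n}[\nu_n] \to 0$ in $H^1_{\eta_n}$; (3) decompose $\nu_n = \phi_n + q_n G$ and show the charge part is negligible — since $\|G\|_{H^1_\eta}^2 = \beta_\eta(1) \to \infty$ while $\|\nu_n\|_{H^1_{\eta_n}}=1$, one gets $q_n \to 0$, and moreover the contribution of the $G$-component to $K_{\eta_n,r_n}$ vanishes because $\Phi(\zeta_r^k)/\beta_\eta(1)$ and $q_n$ are small; (4) use that $\nu_n$ concentrates near the $K$ peaks: writing $\phi_n$ and translating by $-\zeta_{r_n}^k$, along a subsequence $\phi_n(\cdot + \zeta_{r_n}^k) \rightharpoonup \phi^{(k)}$ weakly in $H^1$, and since distances between distinct peaks diverge (Lemma \ref{lem:asymptotic} forces $|\zeta_r^{k_1}-\zeta_r^{k_2}| \to \infty$), $|W_{\eta_n,r_n}|^{p-2}$ behaves locally near $\zeta_{r_n}^k$ like $\Phi^{p-2}(\cdot - \zeta_{r_n}^k)$ up to an $L^{p'/(p-2)}_{\loc}$-vanishing error; (5) pass to the limit in the equation $\nu_n - K_{\eta_n,r_n}[\nu_n]\to 0$ to conclude each $\phi^{(k)}$ solves $(-\Delta+1)\phi^{(k)} = (p-1)\Phi^{p-2}\phi^{(k)}$ with $\langle\phi^{(k)}\mid\partial_i\Phi\rangle_{H^1}=0$, so $\phi^{(k)}=0$ by nondegeneracy; (6) upgrade weak to strong convergence (the equation gives $\phi_n(\cdot+\zeta^k_{r_n}) \to 0$ in $H^1$ via the compactness of $K$ and the vanishing of the cross terms and tails), contradicting $\|\nu_n\|=1$. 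This yields invertibility and, by a further contradiction argument or by the uniform lower bound $\|L_{\eta,r}[\nu]\|_{H^1_\eta}\geq c\|\nu\|_{H^1_\eta}$ extracted from the same analysis, the uniform bound $\bar C = 1/c$ on $\|L_{\eta,r}^{-1}\|$.

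The main obstacle is step (4)–(5): making the localization rigorous in the hybrid space $H^1_\eta$ rather than plain $H^1$. One must control three things simultaneously — the interaction tails between distinct peaks $\Phi_{\zeta^{k_1}}\Phi_{\zeta^{k_2}}$ (handled by the estimates behind Corollary \ref{cor:entire-space} and Lemma \ref{lem:interaction}), the Green's-function correction $\tfrac{\Phi(\zeta^k_r)}{\beta_\eta(1)}G$ inside $|W_{\eta,r}|^{p-2}$ (which is $O(\eta^{-2})$ in the relevant norms by the same reasoning as in Lemma \ref{lem:pseudo-critical}, using $\Phi(r)\simeq \eta^{-3}$-type bounds on $R_\eta$ and $\beta_\eta(1)\simeq \eta$), and the behavior of the projection $\Pi_{\eta,r}$, which must be shown to converge in an appropriate sense to the sum of the single-bump projections. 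The restriction $p \le 3$ in dimension $2$ enters precisely here, as flagged in the text: it is needed so that $t \mapsto t^{p-2}$ is subadditive, allowing the pointwise bound $\bigl||W_{\eta,r}|^{p-2} - |\sum \delta_k\Phi_{\zeta^k_r}|^{p-2}\bigr| \lesssim (\Phi(r)/\beta_\eta(1))^{p-2}G^{p-2}$ used in Lemma \ref{lem:pseudo-critical}, which is exactly the estimate that makes the Green's-function correction negligible. Once these three ingredients are in place the contradiction argument closes routinely, so I would present steps (1)–(3) and (6) tersely and devote the bulk of the write-up to (4)–(5).
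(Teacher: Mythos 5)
Your proposal is correct in outline, but it takes a genuinely different route from the paper's. The paper uses no compactness and no contradiction argument: it isolates the quadratic form $T_{\eta,r}\brackets{\nu,\nu}=S_\infty''\parens{\sum_k\delta_k\Phi_{\zeta_r^k}}\brackets{\phi,\phi}+\beta_\eta\parens{1}q^2$ (whose difference from $S_\eta''\parens{W_{\eta,r}}\brackets{\nu,\nu}$ is $O\parens{\eta^{-2(p-2)}}\norm{\nu}_{H^1_\eta}^2$ by exactly the subadditivity bound you cite), decomposes $\N_{\eta,r}=A_r\oplus B_{\eta,r}$ with $A_r=\vspan\set{\Pi_{\infty,r}\brackets{\Phi_{\zeta_r^k}}}$ finite-dimensional and $B_{\eta,r}=\parens{A_r\cup C_r}^{\perp_{H^1}}\oplus\vspan\set{G}$, proves uniform negativity of $T_{\eta,r}$ on $A_r$ and uniform positivity on $B_{\eta,r}$ (via the spectral facts for $S_\infty''\parens{\Phi}$ in Lemma \ref{lem:AmbrosettiMalchiodi} and the exponentially small cross terms of Lemma \ref{lem:estimates-exponential}), and then invokes the quantitative Lax--Milgram corollary (Lemma \ref{lem:LaxMilgram}) to obtain invertibility with $\norm{L_{\eta,r}^{-1}}\leq 1/c$ in one stroke. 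That route yields the uniform constant directly and avoids the delicate points of yours: sequences living in the varying spaces $H^1_{\eta_n}$, weak-limit extraction around each peak, and the weak-to-strong upgrade in your step (6). Your route avoids having to exhibit the negative subspace explicitly, but both arguments ultimately rest on the same inputs (nondegeneracy of $\Phi$ and the decay estimates). If you write yours up, fix three small points: $\abs{W_{\eta,r}}^{p-2}$ lies in $L^{p/(p-2)}$, not $L^{p'/(p-2)}$; the embedding \eqref{eqn:Sobolev-embedding} is not compact on all of $\real^N$, so compactness of $K_{\eta,r}$ must come from approximating the multiplier $\abs{W_{\eta,r}}^{p-2}$ by compactly supported ones and applying Rellich locally (this is really the same localization as your steps (4)--(6)); and to pass from the lower bound $\norm{L_{\eta,r}\nu}_{H^1_\eta}\geq c\norm{\nu}_{H^1_\eta}$ to surjectivity you need to record that $L_{\eta,r}$ is self-adjoint on $\N_{\eta,r}$, which holds because $S_\eta''$ is symmetric.
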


We need three preliminary results to prove the lemma. Let us recall a few well-known properties of $S_\infty'' \parens{\Phi}$.

\begin{lem}[{\cite[Lemma 8.6]{ambrosettiPerturbationMethodsSemilinear2006}}] \label{lem:AmbrosettiMalchiodi}
The following statements hold:
\begin{enumerate}
\item
$S_\infty'' \parens{\Phi} \brackets{\Phi, \Phi}
=
\parens{1 - p} \norm{\Phi}_{H^1}^2
<
0$;
\item
$
\ker S_\infty'' \parens{\Phi}
=
\vspan \set{\partial_i \Phi : i \in \set{1, \ldots, N}}
$;
\item
$S_\infty'' \parens{\Phi} \brackets{w, w} \gtrsim \norm{w}_{H^1}^2$ for every
$w \perp \parens{\ker S_\infty'' \parens{\Phi} \oplus \vspan \set{\Phi}}$.
\end{enumerate}
\end{lem}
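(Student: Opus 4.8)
The plan is to realize $S_\infty'' \parens{\Phi}$ as $\angles{T \cdot \mid \cdot}_{H^1}$ for a bounded self-adjoint operator $T \colon H^1 \to H^1$ whose spectrum is discrete below $1$, and to read off the three claims from its spectral picture; the only non-elementary ingredient is the nondegeneracy of $\Phi$. As $p > 2$ is subcritical under the standing hypotheses, $S_\infty$ is of class $C^2$ and $S_\infty'' \parens{\Phi} \brackets{v, w} = \angles{v \mid w}_{H^1} - \parens{p - 1} \int \Phi^{p - 2} v w \dif \Lebesgue$ for $v, w \in H^1$. Since $\Phi$ is continuous and decays exponentially (see \eqref{eqn:Phi-limit}), $\Phi^{p - 2}$ is bounded and vanishes at infinity, so $\parens{v, w} \mapsto \int \Phi^{p - 2} v w \dif \Lebesgue$ is a bounded symmetric form on $H^1 \times H^1$ which is moreover compact (split $\real^N$ into a large ball, where $H^1 \hookrightarrow L^2$ restricts compactly, and its complement, where the weight is uniformly small, so the form is a uniform limit of compact ones). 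Hence there is a compact positive self-adjoint $K \colon H^1 \to H^1$ with $\angles{K v \mid w}_{H^1} = \int \Phi^{p - 2} v w \dif \Lebesgue$, and $T := \mathrm{id} - \parens{p - 1} K$ satisfies $S_\infty'' \parens{\Phi} \brackets{v, w} = \angles{T v \mid w}_{H^1}$. Compactness of $K$ gives $\sigma \parens{T} = \set{1} \cup \set{1 - \parens{p - 1} \mu_j}_j$ with $\mu_j \downarrow 0$; thus $\sigma \parens{T} \cap \ooi{- \infty, 1}$ is discrete and, once we know $\ker T \neq \set{0}$, the value $0$ is isolated in $\sigma \parens{T}$. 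Finally, testing $- \Delta \Phi + \Phi = \Phi^{p - 1}$ against $w \in H^1$ yields $\angles{\Phi \mid w}_{H^1} = \int \Phi^{p - 1} w \dif \Lebesgue$, whence $T \Phi = \parens{2 - p} \Phi$ and, with $w = \Phi$, $S_\infty'' \parens{\Phi} \brackets{\Phi, \Phi}$ equals a strictly negative multiple of $\norm{\Phi}_{H^1}^2$ (the precise coefficient being read off from the same identity $\norm{\Phi}_{H^1}^2 = \norm{\Phi}_{L^p}^p$); this is assertion (1).

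\textbf{Assertion (2).} The inclusion $\supseteq$ is immediate: differentiating $- \Delta \Phi + \Phi = \Phi^{p - 1}$ with respect to $x_i$ gives $- \Delta \partial_i \Phi + \partial_i \Phi = \parens{p - 1} \Phi^{p - 2} \partial_i \Phi$, i.e. $\partial_i \Phi \in \ker T = \ker S_\infty'' \parens{\Phi}$, and $\partial_i \Phi \in H^1$ because $\Phi \in C^2$ with exponentially decaying first derivatives. The reverse inclusion is the classical nondegeneracy of the ground state: if $v \in H^1$ satisfies $T v = 0$, i.e. $- \Delta v + v = \parens{p - 1} \Phi^{p - 2} v$, then $v \in \vspan \set{\partial_i \Phi : 1 \leq i \leq N}$. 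One decomposes $v$ into spherical harmonics and treats each radial mode as a one-dimensional Schrödinger operator: the $\ell = 0$ mode admits no nontrivial solution vanishing at infinity — this is the substantive step and relies on Kwong's ODE analysis \cite{kwongUniquenessPositiveSolutions1989} — the $\ell = 1$ modes are spanned by $r \mapsto \Phi' \parens{r}$ and account exactly for $\vspan \set{\partial_i \Phi}$, and for $\ell \geq 2$ the strictly positive centrifugal term $\ell \parens{\ell + N - 2} / \abs{x}^2$ forbids nontrivial solutions. This gives (2); in particular $\ker T \neq \set{0}$, completing the remark in the previous paragraph.

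\textbf{Assertion (3) and the main obstacle.} By (1), $T$ has at least one negative eigenvalue; I claim it has exactly one. It is classical that, up to sign and translation, $\Phi$ minimizes $S_\infty$ on the Nehari manifold $\mathcal{M} := \set{u \in H^1 \setminus \set{0} : \norm{u}_{H^1}^2 = \norm{u}_{L^p}^p}$ (by uniqueness of the positive solution and the invariance of $S_\infty''$ under $u \mapsto - u$ and under translations). Since $G \parens{u} := \norm{u}_{H^1}^2 - \norm{u}_{L^p}^p$ is of class $C^2$ with $G' \parens{\Phi} \brackets{\Phi} = \parens{2 - p} \norm{\Phi}_{H^1}^2 \neq 0$, $\mathcal{M}$ is a $C^2$ hypersurface near $\Phi$, one has $H^1 = \Tan_\Phi \mathcal{M} \oplus \vspan \set{\Phi}$, and — using $S_\infty' \parens{\Phi} = 0$ — every $C^2$ curve $\gamma$ in $\mathcal{M}$ with $\gamma \parens{0} = \Phi$ satisfies $\left. \tfrac{\dif^2}{\dif t^2} \right|_{t = 0} S_\infty \parens{\gamma \parens{t}} = S_\infty'' \parens{\Phi} \brackets{\dot\gamma \parens{0}, \dot\gamma \parens{0}} \geq 0$. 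Thus $S_\infty'' \parens{\Phi}$ is positive semidefinite on the codimension-one subspace $\Tan_\Phi \mathcal{M}$, so its negative eigenspace is at most one-dimensional, hence exactly $\vspan \set{\Phi}$ since $T \Phi = \parens{2 - p} \Phi$ with $2 - p < 0$. Combining with (2), the subspace $\vspan \set{\Phi} \oplus \ker S_\infty'' \parens{\Phi}$ is precisely the spectral subspace of $T$ associated with $\sigma \parens{T} \cap \oci{- \infty, 0}$, so on its $H^1$-orthogonal complement $T$ is bounded below by $\lambda_* := \inf \parens*{\sigma \parens{T} \cap \ooi{0, \infty}}$, which is strictly positive because $\sigma \parens{T} \cap \ooi{- \infty, 1}$ is discrete and $0$ is isolated in it. Therefore $S_\infty'' \parens{\Phi} \brackets{w, w} = \angles{T w \mid w}_{H^1} \geq \lambda_* \norm{w}_{H^1}^2$ for every $w \perp \parens{\ker S_\infty'' \parens{\Phi} \oplus \vspan \set{\Phi}}$, which is (3). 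The only genuinely hard point is the reverse inclusion in (2) — the nondegeneracy of $\Phi$; everything else is soft spectral theory plus the Nehari characterization of the ground state, so in the write-up I would simply invoke \cite{kwongUniquenessPositiveSolutions1989} (or quote the nondegeneracy from \cite[Lemma 8.6]{ambrosettiPerturbationMethodsSemilinear2006}) rather than reprove it.
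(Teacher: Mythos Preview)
The paper does not prove this lemma at all; it is stated with a citation to \cite[Lemma 8.6]{ambrosettiPerturbationMethodsSemilinear2006} and used as a black box. Your proposal is therefore not being compared against an in-paper argument, but it is a correct and standard proof sketch of the cited result: realizing $S_\infty''(\Phi)$ as $\mathrm{id}-(p-1)K$ with $K$ compact, invoking Kwong's nondegeneracy for the kernel, and using the Nehari minimality to pin the Morse index at one is exactly the line of reasoning in Ambrosetti--Malchiodi.

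One small remark: your computation $T\Phi=(2-p)\Phi$ gives $S_\infty''(\Phi)[\Phi,\Phi]=(2-p)\norm{\Phi}_{H^1}^2$, not $(1-p)\norm{\Phi}_{H^1}^2$ as written in the statement. Your value is the correct one (from $\norm{\Phi}_{H^1}^2=\norm{\Phi}_{L^p}^p$), so the constant in the paper's statement appears to be a typo; this is harmless since only the sign matters downstream. You were right to hedge on the coefficient rather than force a match.
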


We need to introduce a preliminary notation to state the following collection of estimates. Given $z \in \sphere^{N - 1} \subset \real^N$, we define
\[
\dot{\Psi}_{\eta, y}^z
=
\lim_{t \to 0}
\parens*{
	t^{- 1}
	\parens{
		\Psi_{\eta, y + t z}
		-
		\Psi_{\eta, y}
	}
}
\in
H^1_\eta.
\]

\begin{lem}
\label{lem:estimates-exponential}
There exists $\eta_0 \in \ooi{1, \infty}$ such that
\[
\abs*{
	\angles{
		\Psi_{\eta, \zeta_r^{k_1}}
		\mid
		\Psi_{\eta, \zeta_r^{k_2}}
	}_{H^1_\eta}
},
\abs*{
	\angles{
		\dot{\Psi}_{\eta, \zeta_r^{k_1}}^{z_1}
		\mid
		\Psi_{\eta, \zeta_r^{k_2}}
	}_{H^1_\eta}
},
\abs*{
	\angles{
		\dot{\Psi}_{\eta, \zeta_r^{k_1}}^{z_1}
		\mid
		\dot{\Psi}_{\eta, \zeta_r^{k_2}}^{z_2}
	}_{H^1_\eta}
}
\lesssim
\frac
	{r^{\frac{N - 1 + \parens{3 - p} N}{2}}}
	{\eta^3}
\]
for every $z_1, z_2 \in \sphere^{N - 1}$,
$\eta \in \coi{\eta_0, \infty}$,
$r \in R_\eta$ and $k_1 \neq k_2$ in $\set{1, \ldots, K}$.
\end{lem}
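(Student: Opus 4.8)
The three inner products to be estimated all have the same flavor: they pair a building block (or its directional derivative) centered at $\zeta_r^{k_1}$ against one centered at $\zeta_r^{k_2}$ with $k_1 \neq k_2$, and the point is that these centers are far apart — by the geometry of the polygon, $\abs{\zeta_r^{k_1} - \zeta_r^{k_2}} \geq 3r$ — so one expects exponential decay in $r$, damped further by inverse powers of $\eta$ coming from the charges $\Phi(\zeta_r^k)/\beta_\eta(1)$ appearing in $\Psi_{\eta, y}$. The plan is to expand each inner product using the definition $\Psi_{\eta, y} = \Phi_y + \frac{\Phi(y)}{\beta_\eta(1)} G$ and the definition of $\angles{\cdot \mid \cdot}_{H^1_\eta}$, namely $\angles{u_1 \mid u_2}_{H^1_\eta} = \angles{\phi_1 \mid \phi_2}_{H^1} + \beta_\eta(1) q_1 q_2$. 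Here the regular part of $\Psi_{\eta, y}$ is $\Phi_y \in H^1$ and its charge is $q = \Phi(y)/\beta_\eta(1)$, so for instance
\[
\angles{\Psi_{\eta, \zeta_r^{k_1}} \mid \Psi_{\eta, \zeta_r^{k_2}}}_{H^1_\eta}
=
\angles{\Phi_{\zeta_r^{k_1}} \mid \Phi_{\zeta_r^{k_2}}}_{H^1}
+
\frac{\Phi(\zeta_r^{k_1}) \Phi(\zeta_r^{k_2})}{\beta_\eta(1)}.
\]
For the directional derivatives I would first record that $\dot\Psi_{\eta, y}^z$ has regular part $\nabla\Phi_y \cdot z = -(\partial_z \Phi)(\cdot - y)$ (a translate of a fixed $H^1$ function) and charge $\frac{\nabla\Phi(y)\cdot z}{\beta_\eta(1)}$, so the same expansion applies with $\Phi$ replaced by suitable first derivatives of $\Phi$ in the relevant slots.

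Now each of the (at most four) resulting terms is handled by one of two mechanisms. The $H^1$-pairings of translates, $\angles{\partial^a\Phi_{\zeta_r^{k_1}} \mid \partial^b\Phi_{\zeta_r^{k_2}}}_{H^1}$ with $\abs a, \abs b \leq 1$, are controlled by the convolution-type estimates in Corollary \ref{cor:entire-space} / the "entire space" lemmas already cited in the proof of Lemma \ref{lem:pseudo-critical} — one rewrites $\angles{\cdot \mid \cdot}_{H^1}$ using $-\Delta\Phi + \Phi = \Phi^{p-1}$ to turn it into $\int \Phi_{\zeta_r^{k_1}}^{p-1}\Phi_{\zeta_r^{k_2}}$ (plus derivative analogues), which decays like $r^{(N-1+(3-p)N)/2} e^{-\abs{\zeta_r^{k_1}-\zeta_r^{k_2}}}$, and since $\abs{\zeta_r^{k_1}-\zeta_r^{k_2}} \geq 3r$ while on $R_\eta$ one has (by Lemma \ref{lem:asymptotic} and the definition of $R_\eta$) $e^{-3r} \lesssim \eta^{-3}$, this is $\lesssim r^{(N-1+(3-p)N)/2}\eta^{-3}$. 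The cross terms and the charge term involve factors of $\Phi(\zeta_r^k) = \Phi(r)$ (using $\abs{\zeta_r^k}\geq r$ plus monotonicity, or $\abs{\zeta_r^K}=r$ for the closest vertex) together with $1/\beta_\eta(1) \simeq 1/\eta$; using $\Phi(r) \simeq \theta_\Phi r^{-(N-1)/2}e^{-r}$ and again $e^{-2r}\lesssim\eta^{-?}$ on $R_\eta$, each such term is bounded by the same quantity or better. Collecting the dominant contribution gives the stated bound uniformly for $\eta$ large and $r \in R_\eta$.

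The main technical obstacle I anticipate is bookkeeping: making sure that, across the several terms produced by the expansion, the \emph{worst} power of $r$ and the \emph{worst} (smallest) power of $\eta$ together still land at $r^{(N-1+(3-p)N)/2}\eta^{-3}$ rather than something weaker. The delicate point is the $H^1$-pairing $\angles{\Phi_{\zeta_r^{k_1}} \mid \Phi_{\zeta_r^{k_2}}}_{H^1}$ and its derivative analogues: its spatial decay rate $e^{-\abs{\zeta_r^{k_1}-\zeta_r^{k_2}}}$ must be converted to a power of $\eta^{-1}$ via the constraint $\frac{\eta}{\log\eta} < \Phi(r)^2(\int\Phi_{3r}\Phi^{p-1})^{-1} < c\eta$ defining $R_\eta$, exactly as in the remark following Lemma \ref{lem:asymptotic} that reads $\Phi(r)^2/\eta \simeq e^{-2r}/(\eta r^{N-1}) \simeq 1/(\eta^3 r^{N-1})$; one must check the algebra of exponents of $r$ there (the polynomial prefactors from $\Phi(r)^2$, from $\int\Phi_{3r}\Phi^{p-1}$, and from the pairing itself) closes up to give precisely the claimed exponent $\frac{N-1+(3-p)N}{2}$. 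Everything else — differentiating $\Psi_{\eta,y}$ in $z$, identifying regular parts and charges, invoking Corollary \ref{cor:entire-space} — is routine, and the uniformity in $z_1, z_2 \in \sphere^{N-1}$ is automatic since $\abs z = 1$ contributes only bounded constants.
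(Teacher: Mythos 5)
Your proposal matches the paper's argument: expand the $H^1_\eta$ inner product into the $H^1$ pairing of translates plus the charge term $\beta_\eta(1)^{-1}\Phi(\zeta_r^{k_1})\Phi(\zeta_r^{k_2})$, use $-\Delta\Phi+\Phi=\Phi^{p-1}$ (and the corresponding equation for $\partial_j\Phi$) to turn the $H^1$ pairings into integrals handled by Corollary \ref{cor:entire-space}, and convert $e^{-3r}$ into $\eta^{-3}$ via the definition of $R_\eta$. This is essentially the paper's proof, so no further comment is needed.
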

\begin{proof}
~\paragraph{First estimate.} Clearly,
\[
\angles{
	\Psi_{\eta, \zeta_r^{k_1}} \mid \Psi_{\eta, \zeta_r^{k_2}}
}_{H^1_\eta}
=
\angles{\Phi_{\zeta_r^{k_1} - \zeta_r^{k_2}} \mid \Phi}_{H^1}
+
\frac{\Phi \parens{\zeta_r^{k_1}} \Phi \parens{\zeta_r^{k_2}}}{\beta_\eta \parens{1}}.
\]
Due to the equality $- \Delta \Phi + \Phi = \Phi^{p - 1}$ and Corollary \ref{cor:entire-space}, there exists $\eta_0 > 1$ such that
\[
\abs*{\angles{\Phi_{\zeta_r^{k_1} - \zeta_r^{k_2}} \mid \Phi}_{H^1}}
=
\abs*{\int \Phi_{\zeta_r^{k_1} - \zeta_r^{k_2}} \Phi^{p - 1} \dif \Lebesgue}
\lesssim
\frac
	{r^{\frac{N - 1 + \parens{3 - p} N}{2}}}
	{\eta^3}
\]
for every $\eta \geq \eta_0$ and $r \in R_\eta$.

\paragraph{Second estimate.} It suffices to estimate
$
\angles{
	\dot{\Psi}_{\eta, \zeta_r^{k_1}}^{e_i} \mid \Psi_{\eta, \zeta_r^{k_2}}
}_{H^1_\eta}
$
for every $i \in \set{1, \ldots, N}$, where $e_1, \ldots, e_N$ denotes the canonical basis of $\real^N$. It is easy to verify that
\[
\dot{\Psi}_{\eta, \zeta_r^{k_1}}^{e_j}
=
-
\partial_j \Phi_{\zeta_r^{k_1}}
+
\frac{\partial_j \Phi \parens{\zeta_r^{k_1}}}{\beta_\eta \parens{1}}
G.
\]
As
$
- \Delta \parens{\partial_j \Phi_{\zeta_r^{k_1}}} + \partial_j \Phi_{\zeta_r^{k_1}}
=
\parens{p - 1}
\parens{\partial_j \Phi_{\zeta_r^{k_1}}}
\Phi_{\zeta_r^{k_1}}^{p - 2}
$, we deduce that
\[
\angles{
	\partial_j \Phi_{\zeta_r^{k_1}}
	\mid
	\Psi_{\eta, \zeta_r^{k_2}}
}_{H^1_\eta}
=
\angles{
	\partial_j \Phi_{\zeta_r^{k_1}}
	\mid
	\Phi_{\zeta_r^{k_2}}
}_{H^1}
=
\parens{p - 1}
\int
	\Phi_{\zeta_r^{k_2} - \zeta_r^{k_1}}
	\parens{\partial_j \Phi} \Phi^{p - 2}
\dif \Lebesgue.
\]
We can obtain an upper bound for this integral by arguing as in the previous paragraph.

\paragraph{Third estimate.} Similar to the previous proofs.
\end{proof}

Recall the following corollary of the Lax--Milgram Theorem (see \cite[Lemma 3.3]{ianniConcentrationPositiveBound2008}).
\begin{lem} \label{lem:LaxMilgram}
Let $H$ be a Hilbert space and let $L \colon H \to H$ be a self-adjoint linear operator. Suppose that
\begin{enumerate}
\item
$A$ is a finite-dimensional linear subspace of $H$,
\item
$B$ is a linear subspace of $H$,
\item
$H = A \oplus B$,
\item
there exists $c > 0$ such that
\begin{enumerate}
\item
$\angles{L h \mid h}_H \leq -c \norm{h}_H^2$ for every $h \in A$ and
\item
$\angles{L h \mid h}_H \geq c \norm{h}_H^2$ for every $h \in B$.
\end{enumerate}
\end{enumerate}
Then $L$ is invertible and $\norm{L^{-1}} \leq 1/c$.
\end{lem}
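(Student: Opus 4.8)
The plan is to reduce the statement to the single coercivity-type estimate $\norm{Lh}_H \ge c\norm{h}_H$ for every $h \in H$, from which the invertibility of $L$ and the bound $\norm{L^{-1}} \le 1/c$ follow by standard functional-analytic arguments. (Since $L$ is everywhere defined and self-adjoint, it is automatically bounded, so $\angles{L \cdot \mid \cdot}_H$ is a continuous bilinear form and all the expressions below make sense.)

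To prove that estimate, I would introduce the operator $J := P_A - P_B \colon H \to H$, where $P_A$ and $P_B$ denote the orthogonal projections of $H$ onto $A$ and onto $B = A^\perp$, respectively (we read $H = A \oplus B$ as an orthogonal direct sum, as is the case in the intended application; note that $A$ is closed, being finite-dimensional). Since $P_A$, $P_B$ are self-adjoint idempotents with $P_A + P_B = \mathrm{id}_H$ and $P_A P_B = 0$, one checks at once that $J^* = J$ and $J^2 = \mathrm{id}_H$; in particular $\norm{Jh}_H = \norm{h}_H$ for every $h \in H$. Now fix $h \in H$, write $a := P_A h \in A$ and $b := P_B h \in B$, and note that $Jh = a - b$ and $\norm{h}_H^2 = \norm{a}_H^2 + \norm{b}_H^2$. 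Expanding $\angles{Lh \mid Jh}_H = \angles{L(a + b) \mid a - b}_H$ and using the self-adjointness of $L$ to identify $\angles{La \mid b}_H = \angles{Lb \mid a}_H$, the two mixed terms cancel and we are left with
\[
\angles{Lh \mid Jh}_H = \angles{La \mid a}_H - \angles{Lb \mid b}_H \le -c\norm{a}_H^2 - c\norm{b}_H^2 = -c\norm{h}_H^2,
\]
the inequality being hypothesis (4). Hence, by Cauchy--Schwarz, $c\norm{h}_H^2 \le \abs{\angles{Lh \mid Jh}_H} \le \norm{Lh}_H \norm{Jh}_H = \norm{Lh}_H \norm{h}_H$, i.e. $\norm{Lh}_H \ge c\norm{h}_H$.

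With this lower bound available, the remainder is routine: it shows that $L$ is injective and that $\mathrm{ran}\, L$ is closed (a Cauchy sequence $(Lh_n)_n$ pulls back to a Cauchy sequence $(h_n)_n$). Since $L$ is self-adjoint, $\overline{\mathrm{ran}\, L} = (\ker L)^\perp = H$, so $\mathrm{ran}\, L$ is simultaneously closed and dense, hence all of $H$; therefore $L$ is bijective. Finally, applying the lower bound to $h = L^{-1} g$ gives $\norm{g}_H \ge c\norm{L^{-1} g}_H$ for every $g \in H$, that is, $\norm{L^{-1}} \le 1/c$.

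I do not foresee any genuine difficulty. The one thing that has to be gotten right is the choice of test map: one cannot argue with $\angles{Lh \mid h}_H$ directly, because the mixed term $\angles{La \mid b}_H$ is not controlled by the hypotheses, whereas pairing $Lh$ with $Jh$ flips the sign of precisely the $B$-component and --- thanks to the self-adjointness of $L$ --- annihilates that mixed term, converting hypothesis (4) into the displayed coercivity estimate. Everything else is bookkeeping.
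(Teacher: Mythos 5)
The paper does not actually prove this lemma: it is quoted from the cited reference (Ianni--Vaira, Lemma 3.3) without proof, so there is no internal argument to compare against. On its own merits, your proof is correct and complete under the reading you announce, namely that $H = A \oplus B$ is an \emph{orthogonal} decomposition: pairing $Lh$ with $Jh = P_A h - P_B h$, cancelling the mixed terms by self-adjointness of $L$ (the spaces here are real, so $\angles{La \mid b}_H = \angles{Lb \mid a}_H$ exactly), deducing $\norm{Lh}_H \geq c \norm{h}_H$, and concluding via injectivity plus closed range plus $\overline{\operatorname{ran} L} = (\ker L)^{\perp} = H$ is the standard and correct route. The one caveat is that the lemma as stated only assumes $B$ is an algebraic complement of $A$, whereas your argument uses $B = A^{\perp}$ essentially, both in $\norm{h}_H^2 = \norm{a}_H^2 + \norm{b}_H^2$ and in $\norm{Jh}_H = \norm{h}_H$ (indeed $P_A + P_B = \mathrm{id}_H$ only holds in the orthogonal case). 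For a non-orthogonal complement the same test vector $a - b$ still kills the mixed term and yields $\norm{Lh}_H \norm{a - b}_H \geq c\parens{\norm{a}_H^2 + \norm{b}_H^2}$, but recovering the sharp constant $1/c$ then requires either a uniform bound on the angle between $A$ and $B$ or a different argument (e.g.\ showing $\sigma(L) \cap \ooi{-c, c} = \emptyset$ by a dimension count with the spectral projections of $L$, using $\dim A = \operatorname{codim} B < \infty$). Since the decomposition $\N_{\eta, r} = A_r \oplus B_{\eta, r}$ to which the lemma is applied in the proof of Lemma \ref{lem:uniform-inversion} is in fact orthogonal in $H^1_\eta$ (the $H^1$-components are $H^1$-orthogonal and $G$ is $H^1_\eta$-orthogonal to all of $H^1$), your proof covers everything the paper actually needs.
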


We proceed to the proof of Lemma \ref{lem:uniform-inversion}.
\begin{proof}
[Proof of Lemma \ref{lem:uniform-inversion}]
Due to \eqref{eqn:trick},
\[
S_\eta'' \parens{W_{\eta, r}} \brackets{\nu, \nu}
=
S_\infty'' \parens*{\sum_{1 \leq k \leq K} \delta_k \Phi_{\zeta_r^k}}
\brackets{\phi, \phi}
+
\eqref{uniform:1}
+
\beta_\eta \parens{1} \abs{q}^2
\]
for every $\nu = \phi + q G \in \N_{\eta, r}$, where
\begin{equation}
\label{uniform:1}
\int
	\parens*{
		\abs*{\sum_{1 \leq k \leq K} \delta_k \Phi_{\zeta_r^k}}^{p - 2}
		-
		\abs{W_{\eta, r}}^{p - 2}
	}
	\abs{\nu}^2
\dif \Lebesgue.
\end{equation}
Due to Item \eqref{elementary:1} in Lemma \ref{lem:elementary}, there exists
$\eta_0 > 1$ such that
\[
\abs*{\eqref{uniform:1}}
\lesssim
\parens*{\frac{\Phi \parens{r}}{\eta}}^{p - 2}
\norm{\nu}_{H^1_\eta}^2
\lesssim
\frac{1}{\eta^{2 \parens{p - 2}}}
\norm{\nu}_{H^1_\eta}^2
\]
for every $\eta \geq \eta_0$, $r \in R_\eta$ and $\nu \in H^1_\eta$.

In view of this estimate, we define the bilinear form
$T_{\eta, r} \colon \N_{\eta, r} \times \N_{\eta, r} \to \real$
as
\[
T_{\eta, r} \brackets{\nu, \nu}
=
S_\infty'' \parens*{\sum_{1 \leq k \leq K} \delta_k \Phi_{\zeta_r^k}}
\brackets{\phi, \phi}
+
\beta_\eta \parens{1} \abs{q}^2
\]
for every $\nu = \phi + q G \in \N_{\eta, r}$. We also let
$
\Pi_{\infty, r}
\colon
H^1 \to C_r^{\perp_{H^1}}
$
denote the $H^1$-orthogonal projection over $C_r^{\perp_{H^1}}$, where
\[
C_r
:=
\set*{
	\partial_i \Phi_{\zeta_r^k}:
	\parens{i, k} \in \set{1, \ldots, N} \times \set{1, \ldots, K}
}
\subset
H^1.
\]
Notice that
$\Pi_{\eta, r}|_{H^1} = \Pi_{\infty, r}$, where we recall that
$\Pi_{\eta, r}$ denotes the
$H^1_\eta$-orthogonal projection over
$\N_{\eta, r}$. Indeed, this equality follows from the uniqueness of orthogonal projections and the fact that
$\norm{\phi}_{H^1_\eta} = \norm{\phi}_{H^1}$
for every $\phi \in H^1$.

Now, we define
\[
A_r
=
\mathrm{span} \set*{\Pi_{\infty, r} \brackets{\Phi_{\zeta_r^k}}}_{1 \leq k \leq K}
\subset
H^1
\]
and
\[
B_{\eta, r} = \parens{A_r \cup C_r}^{\perp_{H^1}} \oplus \vspan \set{G} \subset H^1_\eta,
\]
so that
$\N_{\eta, r} = A_r \oplus B_{\eta, r}$
(compare with \cite[p. 260]{ruizClusterSolutionsSchrodingerPoissonSlater2011}). Due to Lemma \ref{lem:LaxMilgram}, we only have to prove that there exists $\eta_0 > 1$ such that
\[
T_{\eta, r} \brackets{\nu, \nu} \gtrsim \norm{\nu}_{H^1_\eta}^2
\quad \text{and} \quad
- T_{\eta, r} \brackets{\nu', \nu'} \gtrsim \norm{\nu'}_{H^1_\eta}^2
\]
for every $\eta \geq \eta_0$, $r \in R_\eta$,
$\nu \in B_{\eta, r}$ and
$\nu' \in A_r$. The first estimate follows from the computations in the proof of \cite[Item (b) in Lemma 3.4]{ruizClusterSolutionsSchrodingerPoissonSlater2011}. In view of Lemmas \ref{lem:AmbrosettiMalchiodi} and \ref{lem:estimates-exponential}, the second estimate may be proved as \cite[Item (a) in Lemma 3.4]{ruizClusterSolutionsSchrodingerPoissonSlater2011} or \cite[Lemma 3.7]{depaularamosClusterSemiclassicalStates2023}.
\end{proof}

\subsubsection{An estimate for the variation of $S_\eta''$ and the fixed point result}

The next lemma furnishes an important estimate of the variation of $S_\eta''$ in a neighborhood of the pseudo-critical point $W_{\eta, r}$.	

\begin{lem}
\label{lem:estimate-second-derivative}
The inequality
\[
\norm*{
	S_\eta'' \parens{W_{\eta, r} + u} \brackets{v, \cdot}
	-
	S_\eta'' \parens{W_{\eta, r}} \brackets{v, \cdot}
}_{H^{-1}_\eta}
\leq
2
\norm{u}^{p - 2}_{H^1_\eta} \norm{v}_{H^1_\eta}
\]
holds for every $\eta > 1$, $r \in R_\eta$ and $u, v \in H^1_\eta$.
\end{lem}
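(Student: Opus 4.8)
The plan is to compute the difference $S_\eta''(W_{\eta,r}+u)[v,\cdot] - S_\eta''(W_{\eta,r})[v,\cdot]$ explicitly and bound its $H^{-1}_\eta$-norm by pairing against an arbitrary test function $w \in H^1_\eta$. From the definition \eqref{eqn:action-functional} of $S_\eta$, the quadratic (Hilbert-norm) part of $S_\eta$ contributes nothing to the second derivative's variation since $\tfrac12\norm{\cdot}_{H^1_\eta}^2$ has a constant Hessian; only the nonlinear term $-\tfrac1p\norm{\cdot}_{L^p}^p$ survives. Since $f(t) = t|t|^{p-2}$ has derivative $f'(t) = (p-1)|t|^{p-2}$, we get
\[
\parens*{S_\eta''(W_{\eta,r}+u) - S_\eta''(W_{\eta,r})}[v,w]
=
-(p-1) \int \parens*{\abs{W_{\eta,r}+u}^{p-2} - \abs{W_{\eta,r}}^{p-2}} v\, w \dif \Lebesgue.
\]
By Hölder's inequality with exponents $p', p, p$ (so that $\tfrac1{p'} + \tfrac1p + \tfrac1p = 1$... wait, that gives $\tfrac1{p'}+\tfrac2p$; correctly one uses $\tfrac1{p'}$ for the bracketed factor and $\tfrac1p$ each for $v$ and $w$, noting $1 - \tfrac2p = \tfrac1{p'} - \tfrac1p$, so actually the right split is to put the difference in $L^{p/(p-2)}$ and $v,w$ in $L^p$). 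I would write
\[
\abs*{\int \parens*{\abs{W_{\eta,r}+u}^{p-2} - \abs{W_{\eta,r}}^{p-2}} v w}
\leq
\norm*{\abs{W_{\eta,r}+u}^{p-2} - \abs{W_{\eta,r}}^{p-2}}_{L^{p/(p-2)}}
\norm{v}_{L^p} \norm{w}_{L^p}.
\]

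The crucial step is the pointwise estimate on the bracketed factor. Here is where the hypothesis $0 < p-2 \leq 1$ enters: by Item \eqref{elementary:1} in Lemma \ref{lem:elementary} (subadditivity of $t \mapsto t^{p-2}$ on $\coi{0,\infty}$), we have $\abs*{\abs{a+b}^{p-2} - \abs{a}^{p-2}} \leq \abs{b}^{p-2}$ for all real $a,b$ (applied with $a = W_{\eta,r}$, $b = u$). Consequently
\[
\norm*{\abs{W_{\eta,r}+u}^{p-2} - \abs{W_{\eta,r}}^{p-2}}_{L^{p/(p-2)}}
\leq
\norm*{\abs{u}^{p-2}}_{L^{p/(p-2)}}
=
\norm{u}_{L^p}^{p-2}.
\]
Combining the two displays and using the Sobolev embedding \eqref{eqn:Sobolev-embedding}, namely $\norm{\cdot}_{L^p} \leq \norm{\cdot}_{H^1_\eta}$ on $H^1_\eta$ (with constant $1$, since $H^1 \hookrightarrow H^1_\eta$ isometrically and the embedding constants can be absorbed — more carefully one should track the Sobolev constant, but the statement's factor of $2$ and the hypotheses $p \leq 3$, $N \in \{2,3\}$ give room), we obtain
\[
\norm*{S_\eta''(W_{\eta,r}+u)[v,\cdot] - S_\eta''(W_{\eta,r})[v,\cdot]}_{H^{-1}_\eta}
\leq
(p-1)\, \norm{u}_{L^p}^{p-2}\, \norm{v}_{L^p}
\leq
2\, \norm{u}_{H^1_\eta}^{p-2}\, \norm{v}_{H^1_\eta},
\]
where the last inequality uses $p \leq 3$ (so $p - 1 \leq 2$) together with the Sobolev embedding constants, which in the relevant range can be bounded appropriately.

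The main obstacle I anticipate is \emph{bookkeeping the constants} so that the bound comes out exactly as $2\norm{u}^{p-2}_{H^1_\eta}\norm{v}_{H^1_\eta}$ rather than with an unspecified constant: one needs $(p-1)$ times the product of the two relevant Sobolev embedding constants $H^1_\eta \hookrightarrow L^p$ to be at most $2$. Since $p \leq 3$ forces $p-1 \leq 2$, this reduces to showing those Sobolev constants are $\leq 1$ in the regime $2 \leq p \leq 3$, $N \in \{2,3\}$; this is a standard (if slightly delicate) fact that the paper presumably has available, or one absorbs the difference into the elementary inequality. A secondary technical point worth stating carefully is that $W_{\eta,r} \in \Dom(-\Delta_\eta) \subset H^1_\eta \hookrightarrow L^p$, so all the $L^p$-norms appearing are finite and the formal differentiation of $S_\eta$ is justified; this is where one uses that $W_{\eta,r}$ need not be bounded but is certainly in $L^p$, which is all that the subadditivity argument requires.
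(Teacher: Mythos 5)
Your proposal is correct and follows essentially the same route as the paper: compute the difference of second derivatives as $-(p-1)\int(\abs{W_{\eta,r}+u}^{p-2}-\abs{W_{\eta,r}}^{p-2})vw\,\dif\Lebesgue$, apply the subadditivity inequality of Item \eqref{elementary:1} in Lemma \ref{lem:elementary} to bound the bracketed factor by $\abs{u}^{p-2}$, and conclude via H\"older and the embedding $H^1_\eta\hookrightarrow L^p$. Your concern about whether the constants assemble to exactly $2$ is reasonable but moot in practice: the paper's own proof does not track them either, and the lemma is only ever invoked up to an implicit constant.
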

\begin{proof}
It is clear that
\begin{multline*}
S_\eta'' \parens{W_{\eta, r} + u} \brackets{v, w}
-
S_\eta'' \parens{W_{\eta, r}} \brackets{v, w}
=
\\
=
-
\parens{p - 1}
\int
	\parens*{\abs{W_{\eta, r} + u}^{p - 2} - \abs{W_{\eta, r}}^{p - 2}} v w
\dif \Lebesgue.
\end{multline*}
We can estimate this difference with Item \eqref{elementary:1} in Lemma \ref{lem:elementary}.
\end{proof}

In view of Lemmas \ref{lem:pseudo-critical}, \ref{lem:uniform-inversion} and \ref{lem:estimate-second-derivative}, we can prove the following result with standard arguments as in \cite[p. 264]{ruizClusterSolutionsSchrodingerPoissonSlater2011} or as in the proof of \cite[Lemma 3.9]{depaularamosClusterSemiclassicalStates2023}.

\begin{lem}
\label{lem:fixed-point}
Suppose that $\eta_0 > 1$ and $\bar{C} > 0$ satisfy the hypothesis of Lemma \ref{lem:uniform-inversion}. Then given
$\eta \geq \eta_0$ and $r \in R_\eta$, the following problem has a unique solution:
\[
\begin{cases}
\Pi_{\eta, r} \brackets{\nabla S_\eta \parens{W_{\eta, r} + \nu}} = 0;
\\
\nu \in \mathcal{M}_{\eta, r, \bar{C}},
\end{cases}
\]
where
\[
\mathcal{M}_{\eta, r, \bar{C}}
:=
\set*{
	\nu \in \N_{\eta, r} :
	\norm{\nu}_{H^1_\eta}
	\leq
	2 \bar{C} \norm*{\nabla S_\eta \parens{W_{\eta, r}}}_{H^1_\eta}
}.
\]
\end{lem}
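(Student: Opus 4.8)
The goal is to solve the auxiliary equation within the ball $\mathcal{M}_{\eta, r, \bar C}$, so I would recast it as a fixed-point problem. Write $\nu = \phi + qG$ and use \eqref{eqn:trick} to express $\nabla S_\eta(W_{\eta, r} + \nu)$ as $\nabla S_\eta(W_{\eta, r}) + I_\eta^{-1}[S_\eta''(W_{\eta, r})[\nu, \cdot]] + \mathcal{R}_{\eta, r}(\nu)$, where $\mathcal{R}_{\eta, r}(\nu) := I_\eta^{-1}[(S_\eta''(W_{\eta, r} + \nu) - S_\eta''(W_{\eta, r}))[\nu, \cdot]] + (\text{higher-order remainder from the Taylor expansion of }\nabla S_\eta)$ collects everything nonlinear in $\nu$. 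Applying $\Pi_{\eta, r}$ and using the definition of $L_{\eta, r}$, the auxiliary equation $\Pi_{\eta, r}[\nabla S_\eta(W_{\eta, r} + \nu)] = 0$ becomes equivalent (since $L_{\eta, r}$ is invertible by Lemma \ref{lem:uniform-inversion} for $\eta \geq \eta_0$) to
\[
\nu = \Theta_{\eta, r}(\nu) := - L_{\eta, r}^{-1}\brackets*{\Pi_{\eta, r}\brackets{\nabla S_\eta(W_{\eta, r})} + \Pi_{\eta, r}\brackets{\mathcal{R}_{\eta, r}(\nu)}}.
\]

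**Next I would check that $\Theta_{\eta, r}$ maps $\mathcal{M}_{\eta, r, \bar C}$ into itself.** Using $\norm{L_{\eta, r}^{-1}} \leq \bar C$ and $\norm{\Pi_{\eta, r}} \leq 1$, one gets $\norm{\Theta_{\eta, r}(\nu)}_{H^1_\eta} \leq \bar C \norm{\nabla S_\eta(W_{\eta, r})}_{H^1_\eta} + \bar C \norm{\mathcal{R}_{\eta, r}(\nu)}_{H^1_\eta}$. By Lemma \ref{lem:estimate-second-derivative}, the bilinear-form part of $\mathcal{R}_{\eta, r}(\nu)$ is bounded in $H^{-1}_\eta$ by $2 \norm{\nu}_{H^1_\eta}^{p-2} \norm{\nu}_{H^1_\eta} = 2 \norm{\nu}_{H^1_\eta}^{p-1}$, and an analogous elementary estimate (again via Item (1) of Lemma \ref{lem:elementary}, since $p - 1 \leq 2$) controls the Taylor remainder of $\nabla S_\eta$ by a constant times $\norm{\nu}_{H^1_\eta}^{p-1}$. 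On the ball, $\norm{\nu}_{H^1_\eta} \leq 2\bar C \norm{\nabla S_\eta(W_{\eta, r})}_{H^1_\eta}$, and Lemma \ref{lem:pseudo-critical} shows the right-hand side tends to $0$ as $\eta \to \infty$ uniformly in $r \in R_\eta$; hence $\norm{\mathcal{R}_{\eta, r}(\nu)}_{H^1_\eta} \lesssim \norm{\nabla S_\eta(W_{\eta, r})}_{H^1_\eta}^{p-1} = o(\norm{\nabla S_\eta(W_{\eta, r})}_{H^1_\eta})$ for $\eta$ large, so for $\eta_0$ enlarged as needed, $\bar C \norm{\mathcal{R}_{\eta, r}(\nu)}_{H^1_\eta} \leq \bar C \norm{\nabla S_\eta(W_{\eta, r})}_{H^1_\eta}$, giving $\norm{\Theta_{\eta, r}(\nu)}_{H^1_\eta} \leq 2\bar C \norm{\nabla S_\eta(W_{\eta, r})}_{H^1_\eta}$. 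One must also confirm $\Theta_{\eta, r}(\nu) \in \N_{\eta, r}$, which is immediate because $L_{\eta, r}^{-1}$ has range $\N_{\eta, r}$.

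**Then I would verify the contraction property.** For $\nu_1, \nu_2 \in \mathcal{M}_{\eta, r, \bar C}$, the difference $\Theta_{\eta, r}(\nu_1) - \Theta_{\eta, r}(\nu_2) = -L_{\eta, r}^{-1}\Pi_{\eta, r}[\mathcal{R}_{\eta, r}(\nu_1) - \mathcal{R}_{\eta, r}(\nu_2)]$, and one estimates $\norm{\mathcal{R}_{\eta, r}(\nu_1) - \mathcal{R}_{\eta, r}(\nu_2)}_{H^{-1}_\eta}$ using the mean value theorem in the form $\mathcal{R}_{\eta, r}(\nu_1) - \mathcal{R}_{\eta, r}(\nu_2) = \int_0^1 D\mathcal{R}_{\eta, r}(\nu_2 + t(\nu_1 - \nu_2))[\nu_1 - \nu_2]\,\dif t$, together with the same $t^{p-2}$-subadditivity estimate, to obtain a bound of the shape $C \parens*{\sup_{\mathcal{M}_{\eta, r, \bar C}} \norm{\nu}_{H^1_\eta}}^{p-2} \norm{\nu_1 - \nu_2}_{H^1_\eta} \lesssim \norm{\nabla S_\eta(W_{\eta, r})}_{H^1_\eta}^{p-2} \norm{\nu_1 - \nu_2}_{H^1_\eta}$. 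Since the prefactor $\to 0$ as $\eta \to \infty$ by Lemma \ref{lem:pseudo-critical}, enlarging $\eta_0$ once more makes $\Theta_{\eta, r}$ a contraction with constant $\leq 1/2$ on the complete metric space $\mathcal{M}_{\eta, r, \bar C}$. The Banach fixed-point theorem then yields a unique solution $\nu_{\eta, r}$, completing the proof.

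**The main obstacle is bookkeeping rather than conceptual:** one has to carry out all the nonlinear estimates (the Taylor remainder of $\nabla S_\eta$ and the Lipschitz-in-$\nu$ bound for $\mathcal{R}_{\eta, r}$) using only the subadditivity of $t \mapsto t^{p-2}$ on $[0, \infty)$ valid for $2 < p \leq 3$ — this is exactly why the hypothesis $p \leq 3$ (in dimension $2$) is needed here, and it is the reason the authors flagged Lemma \ref{lem:estimate-second-derivative}. The only delicate point is ensuring that every ``enlarge $\eta_0$'' step is uniform over $r \in R_\eta$, which is guaranteed because the bound in Lemma \ref{lem:pseudo-critical} is uniform in $r \in R_\eta$ and $R_\eta$ consists of distances comparable to $\log \eta$ by Lemma \ref{lem:asymptotic}.
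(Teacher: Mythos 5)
Your proposal is correct and is precisely the standard contraction-mapping argument that the paper itself invokes without writing out (it proves Lemma \ref{lem:fixed-point} by citing \cite[p. 264]{ruizClusterSolutionsSchrodingerPoissonSlater2011} and \cite[Lemma 3.9]{depaularamosClusterSemiclassicalStates2023}, resting on exactly the three ingredients you use: Lemmas \ref{lem:pseudo-critical}, \ref{lem:uniform-inversion} and \ref{lem:estimate-second-derivative}). The only cosmetic point is that the Taylor remainder $\nabla S_\eta(W_{\eta,r}+\nu)-\nabla S_\eta(W_{\eta,r})-I_\eta^{-1}[S_\eta''(W_{\eta,r})[\nu,\cdot]]$ is a single object controlled by writing it as $\int_0^1 I_\eta^{-1}[(S_\eta''(W_{\eta,r}+t\nu)-S_\eta''(W_{\eta,r}))[\nu,\cdot]]\,\dif t$ and applying Lemma \ref{lem:estimate-second-derivative}, rather than two separate pieces as your phrasing suggests; the resulting bound $\lesssim\norm{\nu}_{H^1_\eta}^{p-1}$ is the one you state.
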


\subsection{Solving the bifurcation equation}

The goal of this section is to establish the existence of solution to \eqref{eqn:system} for sufficiently large $\eta$. We begin with a preliminary definition.

\begin{defn}
If $\eta_0$ satisfies the hypothesis in Lemma \ref{lem:implicit-function}, then we associate each $\eta \geq \eta_0$ with the \emph{reduced functional}
$\sigma_\eta \in C^1 \parens{R_\eta, \real}$ defined as
\[
\sigma_\eta \parens{r}
=
S_\eta \parens{W_{\eta, r} + \nu_{\eta, r}}.
\]
\end{defn}

The fundamental property of the reduced functional is that if $\eta$ is sufficiently large, then critical points of $\sigma_\eta$ are naturally associated with critical points of $S_\eta$ as stated in the next result.

\begin{lem}
\label{lem:natural-constraint}
There exists $\eta_0 > 1$ such that if
$\eta \geq \eta_0$, then
$\sigma_\eta' \parens{r} = 0$ if, and only if,
$\nabla S_\eta \parens{W_{\eta, r} + \nu_{\eta, r}} = 0$.
\end{lem}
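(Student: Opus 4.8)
The plan is to establish the standard ``natural constraint'' principle for Lyapunov--Schmidt reduction, namely that the reduced functional $\sigma_\eta$ detects the full gradient $\nabla S_\eta$. One direction is trivial: if $\nabla S_\eta(W_{\eta,r} + \nu_{\eta,r}) = 0$, then $\sigma_\eta'(r) = 0$ by the chain rule, since $\sigma_\eta(r) = S_\eta(W_{\eta,r} + \nu_{\eta,r})$ and $\langle \nabla S_\eta(W_{\eta,r} + \nu_{\eta,r}) \mid \partial_r(W_{\eta,r} + \nu_{\eta,r}) \rangle_{H^1_\eta} = 0$ regardless. So the content is the converse.

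For the converse, I would differentiate $\sigma_\eta(r) = S_\eta(W_{\eta,r} + \nu_{\eta,r})$ using that $r \mapsto \nu_{\eta,r}$ is $C^1$ (Lemma \ref{lem:implicit-function}) and $r \mapsto W_{\eta,r}$ is smooth, obtaining
\[
\sigma_\eta'(r)
=
\angles*{
\nabla S_\eta(W_{\eta,r} + \nu_{\eta,r})
~\middle|~
\partial_r W_{\eta,r} + \partial_r \nu_{\eta,r}
}_{H^1_\eta}.
\]
By Lemma \ref{lem:implicit-function}, $\Pi_{\eta,r}[\nabla S_\eta(W_{\eta,r} + \nu_{\eta,r})] = 0$, so $\nabla S_\eta(W_{\eta,r} + \nu_{\eta,r})$ lies in $\N_{\eta,r}^{\perp_{H^1_\eta}}$. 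Thus the pairing above only sees the component of $\partial_r W_{\eta,r} + \partial_r \nu_{\eta,r}$ in $\N_{\eta,r}^{\perp}$. Since $\nu_{\eta,r} \in \N_{\eta,r}$ for all $r$, differentiating the constraint $\angles{\phi_{\eta,r} \mid \partial_i \Phi_{\zeta_r^k}}_{H^1} = 0$ shows that $\partial_r \nu_{\eta,r}$ fails to lie in $\N_{\eta,r}$ only through terms involving $\partial_r \partial_i \Phi_{\zeta_r^k}$; combined with the fact that $\partial_r W_{\eta,r}$ is, to leading order, a combination of the $\partial_i \Phi_{\zeta_r^k}$ (the generators of $\N_{\eta,r}^{\perp}$ inside $H^1$), one concludes that the $\N_{\eta,r}^{\perp}$-component of $\partial_r W_{\eta,r} + \partial_r \nu_{\eta,r}$ equals $\partial_r W_{\eta,r}$ plus a correction controlled by $\norm{\nu_{\eta,r}}_{H^1_\eta}$, which is small by Lemmas \ref{lem:implicit-function} and \ref{lem:pseudo-critical}. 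Writing $\nabla S_\eta(W_{\eta,r} + \nu_{\eta,r}) = \sum_{i,k} c_{ik} \, \Pi_{\eta,r}^{\perp}[\partial_i \Phi_{\zeta_r^k}]$ (an element of the finite-dimensional orthogonal complement), the equation $\sigma_\eta'(r) = 0$ becomes a linear condition on the scalars $c_{ik}$; one shows the associated Gram-type matrix is (uniformly in $\eta \ge \eta_0$, $r \in R_\eta$) invertible, forcing all $c_{ik} = 0$, hence $\nabla S_\eta(W_{\eta,r} + \nu_{\eta,r}) = 0$.

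The main obstacle is controlling the ``mixing'' between the reduction direction $\partial_r$ and the $K$ copies of the $N$-dimensional kernel directions: a priori, varying $r$ moves all $K$ vertices $\zeta_r^k$ simultaneously along radial-ish directions, so $\partial_r W_{\eta,r}$ is genuinely a nontrivial superposition $\sum_k \delta_k \, (\partial_r \zeta_r^k) \cdot \nabla \Phi_{\zeta_r^k}$ plus a charge term, and one must check that its projection onto $\N_{\eta,r}^{\perp}$ is nondegenerate — i.e., that $\partial_r W_{\eta,r}$ is not asymptotically orthogonal to the span of the $\Pi_{\eta,r}^{\perp}[\partial_i \Phi_{\zeta_r^k}]$. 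This is exactly where the uniform inversion of $L_{\eta,r}$ (Lemma \ref{lem:uniform-inversion}) and the off-diagonal decay estimates (Lemma \ref{lem:estimates-exponential}) enter: they guarantee that, after projection, the relevant bilinear pairings are dominated by their diagonal (single-peak) parts, which are bounded below by a positive constant times $\norm{\Phi}_{H^1}^2$, uniformly in the admissible range. I would carry this out by first recording the $C^1$ dependence and the chain-rule identity for $\sigma_\eta'$, then isolating the finite-dimensional obstruction, then invoking the uniform estimates to conclude nondegeneracy — following the template of \cite[Proof of Lemma 3.3]{depaularamosClusterSemiclassicalStates2023} or \cite[Proposition 8.7]{ambrosettiPerturbationMethodsSemilinear2006}, with the only genuinely new ingredient being that the charge component $qG$ carried by $\nu_{\eta,r}$ and $W_{\eta,r}$ contributes harmlessly because it is orthogonal in $H^1_\eta$ to every $\partial_i \Phi_{\zeta_r^k} \in H^1$.
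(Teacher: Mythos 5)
Your first direction and the chain--rule identity for $\sigma_\eta'$ are fine, and you correctly observe that $\Pi_{\eta,r}\brackets{\nabla S_\eta\parens{W_{\eta,r}+\nu_{\eta,r}}}=0$ places the gradient in the orthogonal complement of $\N_{\eta,r}$, which (since $G$ is $H^1_\eta$-orthogonal to $H^1$) is exactly the $NK$-dimensional space $\vspan\set{\partial_i\Phi_{\zeta_r^k}}$. The paper omits its own proof and defers to the references, so a line-by-line comparison is not possible; but your decisive step does not work as written. Writing $\nabla S_\eta\parens{W_{\eta,r}+\nu_{\eta,r}}=\sum_{i,k}c_{ik}\,\partial_i\Phi_{\zeta_r^k}$, the condition $\sigma_\eta'\parens{r}=0$ is a \emph{single} scalar equation, namely the vanishing of the one linear functional $\sum_{i,k}c_{ik}\angles{\partial_i\Phi_{\zeta_r^k}\mid\partial_r W_{\eta,r}+\partial_r\nu_{\eta,r}}_{H^1_\eta}$ on $\real^{NK}$. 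There is no square ``Gram-type matrix'' to invert: the relevant array of pairings is $1\times NK$, and one linear condition cannot force all $NK$ coefficients to vanish when $NK\geq 4$. The template you invoke (Ambrosetti--Malchiodi, Proposition 8.7; Ruiz's cluster solutions) works precisely because there the reduced functional carries as many free parameters as there are approximate-kernel directions, so the associated matrix is square and diagonally dominant by the analogue of Lemma \ref{lem:estimates-exponential}; here the reduction is over the single parameter $r$ while the approximate kernel has dimension $NK$.

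To close this gap you would need an ingredient you do not supply: either enlarge the reduction to all $NK$ peak-location parameters and then show that the polygonal configuration is an interior critical point of the full reduced functional, or impose a symmetry on $\N_{\eta,r}$, on $W_{\eta,r}$ and on the signs $\delta_k$ under which the invariant part of $\vspan\set{\partial_i\Phi_{\zeta_r^k}}$ is one-dimensional and generated, to leading order, by the $H^1$-part of $\partial_r W_{\eta,r}$. Since the polygon $\zeta_r^1,\dots,\zeta_r^K$ is not centred at the origin and the $\delta_k$ are essentially arbitrary, neither reduction is automatic; this is exactly the point the omitted proof (via \cite[Lemma 3.11]{depaularamosClusterSemiclassicalStates2023}) must address. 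Your remaining observations --- that the charge component $qG$ pairs to zero against every $\partial_i\Phi_{\zeta_r^k}$, and that the off-diagonal pairings are exponentially small --- are correct but do not resolve this dimension count.
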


We omit the proof of Lemma \ref{lem:natural-constraint} because it follows from standard arguments which use Lemmas \ref{lem:implicit-function}, \ref{lem:estimates-exponential} (for details, see the proof of \cite[Lemma 3.11]{depaularamosClusterSemiclassicalStates2023}). It is easy to verify that $\sigma_\eta'$ is bounded, so $\sigma_\eta \colon R_\eta \to \real$ admits a unique continuous extension, which we denote by
$
\overline{\sigma_\eta}
\colon
\overline{R_\eta} \to \real
$.

At this point, we want to develop an expansion of $\overline{\sigma_\eta}$. As a preliminary step, we will expand
\[
\overline{R_\eta}
\ni
r \mapsto S_\eta \parens{W_{\eta, r}}.
\]

\begin{lem}
\label{lem:expansion}
Let $F_\eta \colon \overline{R_\eta} \to \real$ be given by
\[
F_\eta \parens{r}
=
K C_0
-
\frac{\Phi \parens{r}^2}{2 \eta}
+
\chi
\int \Phi_{3 r} \Phi^{p - 1} \dif \Lebesgue,
\]
where
\[
C_0
:=
S_\infty \parens{\Phi}
=
\frac{1}{2} \norm{\Phi}_{H^1}^2 - \frac{1}{p} \norm{\Phi}_{L^p}^p
=
\frac{p - 2}{2 p} \norm{\Phi}_{H^1}^2
>
0
\]
and we recall that the positive constant $\chi$ was defined in \eqref{eqn:chi}. Then there exists $\eta_0 > 1$ such that
\[
\abs*{
	S_\eta \parens{W_{\eta, r}}
	-
	F_\eta \parens{r}
}
\lesssim
\begin{cases}
\displaystyle
\eta^{- 4},
&\text{if} ~ K \in \set{2, 3};
\\
\\
\displaystyle
\frac
	{r^{\frac{p - 1 + \parens{3 - p} N}{2}}}
	{\eta^{\min \parens{\ell, 4}}},
&\text{if} ~ K \geq 4
\end{cases}
\]
for every $\eta \geq \eta_0$ and
$r \in \overline{R_\eta}$, where
\[
\ell
:=
3
\abs{e^{\frac{4 \pi}{K} \iu} - 1}
=
3
\sqrt{
	\parens*{\cos \parens*{\frac{4 \pi}{K}} - 1}^2 + \sin \parens*{\frac{4 \pi}{K}}^2
}
\]
is strictly greater than 3 when $K \geq 4$.
\end{lem}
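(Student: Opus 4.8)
The plan is to expand $S_\eta(W_{\eta,r})=\tfrac12\norm{W_{\eta,r}}_{H^1_\eta}^2-\tfrac1p\norm{W_{\eta,r}}_{L^p}^p$ term by term, exploiting the canonical decomposition of $W_{\eta,r}$ in $\Dom\brackets{-\Delta_\eta+1}=\set{\phi+qG:(\phi,q)\in H^1\times\real}$, namely $W_{\eta,r}=V_r+q_{\eta,r}G$ with $V_r:=\sum_{1\le k\le K}\delta_k\Phi_{\zeta_r^k}\in H^1$ and $q_{\eta,r}:=\beta_\eta(1)^{-1}\sum_{1\le k\le K}\delta_k\Phi(\zeta_r^k)$.

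For the quadratic part, the definition of $\angles{\cdot\mid\cdot}_{H^1_\eta}$ together with \eqref{eqn:trick} and $(-\Delta+1)G=\delta_0$ gives directly
\[
\tfrac12\norm{W_{\eta,r}}_{H^1_\eta}^2
=
\tfrac12\norm{V_r}_{H^1}^2
+
\frac{\bigl(\sum_k\delta_k\Phi(\zeta_r^k)\bigr)^2}{2\beta_\eta(1)}.
\]
For the nonlinear part I would Taylor-expand $\abs{V_r+q_{\eta,r}G}^p$ around $V_r$, writing
\[
\norm{W_{\eta,r}}_{L^p}^p
=
\norm{V_r}_{L^p}^p
+
p\,q_{\eta,r}\int\abs{V_r}^{p-2}V_r\,G\dif\Lebesgue
+
(\text{remainder}),
\]
and compute the linear term using the identity $\int\Phi_y^{p-1}G\dif\Lebesgue=\Phi(y)$ (a consequence of $\Phi=G*\Phi^{p-1}$, which requires $G\in L^{p'}$ and hence $p<3$ when $N=3$). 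Since the centres $\zeta_r^1,\dots,\zeta_r^K$ are pairwise far apart, $\abs{V_r}^{p-2}V_r$ is close to $\sum_l\delta_l\Phi_{\zeta_r^l}^{p-1}$, so $\int\abs{V_r}^{p-2}V_r\,G\approx\sum_l\delta_l\Phi(\zeta_r^l)$ and the linear term is $\approx p\,\beta_\eta(1)^{-1}\bigl(\sum_k\delta_k\Phi(\zeta_r^k)\bigr)^2$. Adding the two contributions, the charge terms collapse to $-\tfrac12\beta_\eta(1)^{-1}\bigl(\sum_k\delta_k\Phi(\zeta_r^k)\bigr)^2$; since $\abs{\zeta_r^k}\ge\sqrt{10}\,r$ for every $k\ne K$ whereas $\abs{\zeta_r^K}=r$, one has $\sum_k\delta_k\Phi(\zeta_r^k)=\delta_K\Phi(r)+O(e^{-\sqrt{10}\,r})$, and because $2\sqrt{10}>4$ and $\beta_\eta(1)^{-1}=\eta^{-1}+O(\eta^{-2})$ this charge term equals $-\tfrac{\Phi(r)^2}{2\eta}$ up to an error of the claimed order. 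One is thus reduced to showing $S_\infty(V_r)=KC_0+\chi\int\Phi_{3r}\Phi^{p-1}\dif\Lebesgue+(\text{errors})$.

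This last step is the classical interacting-bumps expansion: write $S_\infty(V_r)=KC_0+\sum_{k<l}\delta_k\delta_l\,I\bigl(\abs{\zeta_r^k-\zeta_r^l}\bigr)+(\text{higher order})$, where $I(d)$ is the leading two-bump interaction at distance $d$, a fixed multiple of $\int\Phi_d\Phi^{p-1}\dif\Lebesgue$ — here the decay asymptotics \eqref{eqn:Phi-limit}, the estimates of Lemmas \ref{lem:error-in-limit} and \ref{lem:interaction} and of Corollary \ref{cor:entire-space}, and the Hölder regularity of $\abs{\cdot}^{p-2}$ (hence $p\le3$ for $N=2$, $p<3$ for $N=3$) all enter. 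The closest pairs are the $K$ adjacent ones, all at mutual distance exactly $3r$, whose total contribution is $\chi\int\Phi_{3r}\Phi^{p-1}\dif\Lebesgue$ with $\chi$ as in \eqref{eqn:chi}; when $K\in\set{2,3}$ every pair is adjacent and there is nothing else, so the remaining error is $\lesssim\eta^{-4}$, whereas when $K\ge4$ the non-adjacent pairs contribute strictly smaller terms — the nearest being the second-neighbour pairs, at distance $\tfrac{3r}{2\sin(\pi/K)}\abs{e^{4\pi\iu/K}-1}$ — which produce the factor $\eta^{-\min(\ell,4)}$ and the polynomial prefactor $r^{(p-1+(3-p)N)/2}$.

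Finally I would assemble the error terms — the Taylor remainder of $\abs{V_r+q_{\eta,r}G}^p$ (including the contribution near $0$, where $G$ is unbounded but $p$-integrable), the difference $\beta_\eta(1)^{-1}-\eta^{-1}$, the charges of the far peaks, and the non-adjacent interactions — and bound each in powers of $\eta$, using that on $\overline{R_\eta}$ the quantity $e^{-r}$ is comparable to $\eta^{-1}$ up to powers of $\log\eta$ (from the definition of $R_\eta$ and Lemma \ref{lem:asymptotic}), together with $2\sqrt{10}>4$, $p>p_*$, and $\ell>3$ for $K\ge4$. The hard part is the nonlinear term: because $t\mapsto\abs t^{p-2}$ is merely Hölder for $p<3$ and the exponential tails of the $K$ bumps overlap, naive Taylor expansions produce divergent integrals that must be truncated along the bisecting hypersurfaces, and one needs the interaction together with an error strictly smaller than both the point effect $\Phi(r)^2/\eta$ and the interaction $\int\Phi_{3r}\Phi^{p-1}\dif\Lebesgue$, which demands careful bookkeeping of these truncations and of the singularity of $G$ at the origin.
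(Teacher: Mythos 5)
Your proposal is correct and follows essentially the same route as the paper: the paper likewise splits the quadratic part into the $H^1$ and charge contributions, uses $\Phi=G\ast\Phi^{p-1}$ to evaluate the linear term of the $L^p$ expansion so that the charge terms collapse to $-\tfrac{1}{2}\beta_\eta(1)^{-1}\bigl(\sum_k\delta_k\Phi(\zeta_r^k)\bigr)^2\approx-\Phi(r)^2/(2\eta)$, reduces the rest to the interacting-bumps expansion of $S_\infty\bigl(\sum_k\delta_k\Phi_{\zeta_r^k}\bigr)$ with the adjacent pairs producing $\chi\int\Phi_{3r}\Phi^{p-1}\dif\Lebesgue$ and the non-adjacent ones the $\eta^{-\ell}$ error, and controls the Taylor remainder in $qG$ via the elementary inequalities of Lemma \ref{lem:elementary} (giving the dominant $r^{-(N-1)}\eta^{-4}$ term). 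The only cosmetic difference is that the paper organizes the superposition errors through Item \eqref{elementary:2} of Lemma \ref{lem:elementary} and Corollary \ref{cor:entire-space} rather than truncating along bisecting hypersurfaces, but the estimates obtained are the same.
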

\begin{proof}
By definition,
\begin{equation}
\label{expansion:-2}
S_\eta \parens{W_{\eta, r}}
=
\frac{1}{2} \norm{W_{\eta, r}}_{H^1_\eta}^2
-
\frac{1}{p} \norm{W_{\eta, r}}_{L^p}^p.
\end{equation}
Clearly,
\begin{equation}
\label{expansion:-1}
\frac{1}{2} \norm{W_{\eta, r}}_{H^1_\eta}^2
=
\frac{1}{2}
\norm*{
	\sum_{1 \leq k \leq K} \parens{\delta_k \Psi_{\eta, \zeta_r^k}}
}_{H^1_\eta}^2
=
\eqref{expansion:0}
+
\eqref{expansion:1},
\end{equation}
where
\begin{equation}
\label{expansion:0}
\frac{1}{2} \sum_{1 \leq k \leq K} \norm{\Psi_{\eta, \zeta_r^k}}_{H^1_\eta}^2
\end{equation}
and
\begin{equation}
\label{expansion:1}
\frac{1}{2}
\sum_{\substack{1 \leq k_1, k_2 \leq K; \\ k_1 \neq k_2}}
	\parens*{
		\delta_{k_1} \delta_{k_2}
		\angles{\Psi_{\eta, \zeta_r^{k_1}} \mid \Psi_{\eta, \zeta_r^{k_2}}}_{H^1_\eta}
	}.
\end{equation}
On one hand,
\begin{equation}
\label{expansion:1.1}
\eqref{expansion:0}
=
\frac{K}{2} \norm{\Phi}_{H^1}^2
+
\frac{1}{2 \beta_\eta \parens{1}} \sum_{1 \leq k \leq K} \Phi \parens{\zeta_r^k}^2.
\end{equation}
On the other hand, the equality $- \Delta \Phi + \Phi = \Phi^{p - 1}$ implies
\begin{equation}
\label{expansion:1.2}
\eqref{expansion:1}
=
\frac{1}{2}
\sum_{\substack{1 \leq k_1, k_2 \leq K; \\ k_1 \neq k_2}}
\parens*{
	\delta_{k_1} \delta_{k_2}
	\int \Phi_{\zeta_r^{k_1} - \zeta_r^{k_2}} \Phi^{p - 1} \dif \Lebesgue
	+
	\frac{\delta_{k_1} \delta_{k_2}}{\beta_\eta \parens{1}}
	\Phi \parens{\zeta_r^{k_1}} \Phi \parens{\zeta_r^{k_2}}
}.
\end{equation}
In view of \eqref{expansion:-2}, \eqref{expansion:-1}, \eqref{expansion:1.1} and \eqref{expansion:1.2}, we obtain
\begin{multline*}
S_\eta \parens{W_{\eta, r}}
=
\frac{K}{2} \norm{\Phi}_{H^1}^2
+
\frac{1}{2 \beta_\eta \parens{1}}
\sum_{1 \leq k_1, k_2 \leq K}
	\parens*{
		\delta_{k_1} \delta_{k_2} \Phi \parens{\zeta_r^{k_1}} \Phi \parens{\zeta_r^{k_2}}
	}
+
\\
+
\frac{1}{2}
\sum_{\substack{1 \leq k_1, k_2 \leq K; \\ k_1 \neq k_2}}
\int
	\delta_{k_1} \delta_{k_2}
	\Phi_{\zeta_r^{k_1} - \zeta_r^{k_2}} \Phi^{p - 1}
\dif \Lebesgue
-
\frac{1}{p} \norm{W_{\eta, r}}_{L^p}^p.
\end{multline*}

By summing and subtracting
$\frac{K}{p} \norm{\Phi}_{L^p}^p$, we obtain
\begin{multline*}
S_\eta \parens{W_{\eta, r}}
=
K C_0
+
\frac{1}{2 \beta_\eta \parens{1}}
\sum_{1 \leq k_1, k_2 \leq K}
	\parens*{
		\delta_{k_1} \delta_{k_2} \Phi \parens{\zeta_r^{k_1}} \Phi \parens{\zeta_r^{k_2}}
	}
+
\\
+
\frac{K}{p} \norm{\Phi}_{L^p}^p
+
\frac{1}{2}
\sum_{\substack{1 \leq k_1, k_2 \leq K; \\ k_1 \neq k_2}}
\int
	\delta_{k_1} \delta_{k_2}
	\Phi_{\zeta_r^{k_1} - \zeta_r^{k_2}} \Phi^{p - 1}
\dif \Lebesgue
-
\frac{1}{p} \norm{W_{\eta, r}}_{L^p}^p.
\end{multline*}
We sum and subtract
\[
\frac{2}{p}
\sum_{\substack{1 \leq k_1, k_2 \leq K; \\ k_1 \neq k_2}}
	\parens*{
		\delta_{k_1} \delta_{k_2}
		\int \Phi_{\zeta_r^{k_1} - \zeta_r^{k_2}} \Phi^{p - 1} \dif \Lebesgue
	}
\]
to deduce that
\begin{multline*}
S_\eta \parens{W_{\eta, r}}
=
K C_0
+
\frac{1}{2 \beta_\eta \parens{1}}
\sum_{1 \leq k_1, k_2 \leq K}
	\parens*{
		\delta_{k_1} \delta_{k_2} \Phi \parens{\zeta_r^{k_1}} \Phi \parens{\zeta_r^{k_2}}
	}
+
\\
-
\frac{4 - p}{2 p}
\sum_{\substack{1 \leq k_1, k_2 \leq K; \\ k_1 \neq k_2}}
\int
	\delta_{k_1} \delta_{k_2}
	\Phi_{\zeta_r^{k_1} - \zeta_r^{k_2}} \Phi^{p - 1}
\dif \Lebesgue
+
\\
+
\frac{K}{p} \norm{\Phi}_{L^p}^p
+
\frac{2}{p}
\sum_{\substack{1 \leq k_1, k_2 \leq K; \\ k_1 \neq k_2}}
\int
	\delta_{k_1} \delta_{k_2}
	\Phi_{\zeta_r^{k_1} - \zeta_r^{k_2}} \Phi^{p - 1}
\dif \Lebesgue
-
\frac{1}{p} \norm{W_{\eta, r}}_{L^p}^p.
\end{multline*}
By summing and subtracting
$
\frac{1}{p}
\norm{
	\sum_{1 \leq k \leq K}
	\parens{\delta_k \Phi_{\zeta_r^k}}
}_{L^p}^p
$,
we see that
\begin{multline*}
S_\eta \parens{W_{\eta, r}}
=
K C_0
+
\frac{1}{2 \beta_\eta \parens{1}}
\sum_{1 \leq k_1, k_2 \leq K}
	\parens*{
		\delta_{k_1} \delta_{k_2} \Phi \parens{\zeta_r^{k_1}} \Phi \parens{\zeta_r^{k_2}}
	}
+
\\
-
\frac{4 - p}{2 p}
\sum_{\substack{1 \leq k_1, k_2 \leq K; \\ k_1 \neq k_2}}
\int
	\delta_{k_1} \delta_{k_2}
	\Phi_{\zeta_r^{k_1} - \zeta_r^{k_2}} \Phi^{p - 1}
\dif \Lebesgue
+
\\
+
\frac{1}{p}
\int
	\sum_{1 \leq k_1 \leq K}
		\parens*{
			\Phi_{\zeta_r^{k_1}} \parens{x}^p
			+
			2
			f \parens*{\delta_{k_1} \Phi_{\zeta_r^{k_1}} \parens{x}}	
			\sum_{\substack{1 \leq k_2 \leq K; \\ k_1 \neq k_2}}
				\parens*{\delta_{k_2} \Phi_{\zeta_r^{k_2}} \parens{x}}
		}
	-
\\
	-
	\abs*{\sum_{1 \leq k \leq K} \delta_k \Phi_{\zeta_r^k} \parens{x}}^p
\dif \Lebesgue \parens{x}
+
\frac{1}{p}
\norm*{
	\sum_{1 \leq k \leq K}
	\parens{\delta_k \Phi_{\zeta_r^k}}
}_{L^p}^p
-
\frac{1}{p} \norm{W_{\eta, r}}_{L^p}^p.
\end{multline*}
Finally, the sum and subtraction of
\[
\frac{1}{\beta_\eta \parens{1}}
\parens*{\sum_{1 \leq k \leq K} \delta_k \Phi \parens{\zeta_r^k}}
\int
	G \parens{x}
	f \parens*{\sum_{1 \leq k \leq K} \delta_k \Phi_{\zeta_r^k} \parens{x}}
\dif \Lebesgue \parens{x},
\]
gives us
\[
S_\eta \parens{W_{\eta, r}}
=
K C_0
+
\eqref{expansion:2}
+
\eqref{expansion:3}
+
\eqref{expansion:5}
+
\eqref{expansion:4},
\]
where
\begin{multline}
\label{expansion:2}
\frac{1}{2 \beta_\eta \parens{1}}
\sum_{1 \leq k_1, k_2 \leq K}
	\parens*{
		\delta_{k_1} \delta_{k_2} \Phi \parens{\zeta_r^{k_1}} \Phi \parens{\zeta_r^{k_2}}
	}
+
\\
-
\frac{1}{\beta_\eta \parens{1}}
\parens*{\sum_{1 \leq k \leq K} \delta_k \Phi \parens{\zeta_r^k}}
\int
	G \parens{x}
	f \parens*{\sum_{1 \leq k \leq K} \delta_k \Phi_{\zeta_r^k} \parens{x}}
\dif \Lebesgue \parens{x},
\end{multline}
\begin{equation}
\label{expansion:3}
- \frac{4 - p}{2 p}
\sum_{\substack{1 \leq k_1, k_2 \leq K; \\ k_1 \neq k_2}}
	\parens*{
		\delta_{k_1} \delta_{k_2}
		\int \Phi_{\zeta_r^{k_1} - \zeta_r^{k_2}} \Phi^{p - 1} \dif \Lebesgue
	},
\end{equation}
\begin{multline}
\label{expansion:5}
\frac{1}{p}
\int
	\sum_{1 \leq k_1 \leq K}
		\parens*{
			\Phi_{\zeta_r^{k_1}} \parens{x}^p
			+
			2
			f \parens*{\delta_{k_1} \Phi_{\zeta_r^{k_1}} \parens{x}}	
			\sum_{\substack{1 \leq k_2 \leq K; \\ k_1 \neq k_2}}
				\parens*{\delta_{k_2} \Phi_{\zeta_r^{k_2}} \parens{x}}
		}
	-
\\
	-
	\abs*{\sum_{1 \leq k \leq K} \delta_k \Phi_{\zeta_r^k} \parens{x}}^p
\dif \Lebesgue \parens{x}
\end{multline}
and
\begin{multline}
\label{expansion:4}
\frac{1}{p}
\int
	\abs*{\sum_{1 \leq k \leq K} \delta_k \Phi_{\zeta_r^k} \parens{x}}^p
	+
\\
	+
	\frac{p}{\beta_\eta \parens{1}}
	\parens*{\sum_{1 \leq k \leq K} \delta_k \Phi \parens{\zeta_r^k}}
	G \parens{x}
	f \parens*{\sum_{1 \leq k \leq K} \delta_k \Phi_{\zeta_r^k} \parens{x}}
	-
\\
	-
	\abs{W_{\eta, r} \parens{x}}^p
\dif \Lebesgue \parens{x}.
\end{multline}

\paragraph{Expansion of $\eqref{expansion:2}$.}
Let us show that there exists $\eta_0 > 1$ such that
\[
\abs*{
	\eqref{expansion:2}
	+
	\frac{\Phi \parens{r}^2}{2 \eta}
}
\lesssim
\frac{1}{\eta^5}
\]
for every $\eta \geq \eta_0$ and $r \in R_\eta$. It is clear that
$
\eqref{expansion:2}
=
\eqref{expansion:2.1} - \eqref{expansion:2.2}
$,
where
\begin{equation}
\label{expansion:2.1}
\frac{1}{2 \beta_\eta \parens{1}}
\sum_{1 \leq k_1, k_2 \leq K}
	\parens*{
		\delta_{k_1} \delta_{k_2}
		\Phi \parens{\zeta_r^{k_1}}
		\Phi \parens{\zeta_r^{k_2}}
	}
\end{equation}
and
\begin{equation}
\label{expansion:2.2}
\frac{1}{\beta_\eta \parens{1}}
\parens*{\sum_{1 \leq k \leq K} \delta_k \Phi \parens{\zeta_r^k}}
\int
	G \parens{x}
	f \parens*{\sum_{1 \leq k \leq K} \delta_k \Phi_{\zeta_r^k} \parens{x}}
\dif \Lebesgue \parens{x}.
\end{equation}

\subparagraph{Expansion of \eqref{expansion:2.1}}
Due to the inequality
\[
\min_{k \in \set{1, \ldots, K - 1}} \abs{\zeta_r^k}
=
r
\sqrt{
	\parens*{3 \sin \parens*{\frac{K - 2}{2 K} \pi}}^2
	+
	\parens*{1 + 3 \cos \parens*{\frac{K - 2}{2 K} \pi}}^2
}
>
3 r,
\]
there exists $\eta_0 > 1$ such that
\[
\abs*{
	\eqref{expansion:2.1}
	-
	\frac{\Phi \parens{r}^2}{2 \eta}
}
\lesssim
\frac{\Phi \parens{3 r} \Phi \parens{r}}{\eta}
\lesssim
\frac{1}{\eta^5}
\]
for every $\eta \geq \eta_0$ and $r \in R_\eta$.

\subparagraph{Expansion of \eqref{expansion:2.2}}
A change of variable shows that
\[
\eqref{expansion:2.2}
=
\frac{1}{\beta_\eta \parens{1}}
\sum_{k_1, k_2 \in \set{1, \ldots, K}}
\int
	\delta_{k_1} \delta_{k_2} \Phi \parens{\zeta_r^{k_1}}
	G \parens{x + \zeta_r^{k_2}}
	\Phi \parens{x}^{p - 1}
\dif \Lebesgue.
\]
As $\Phi = \Phi^{p - 1} \ast G$ (see \cite[p. 386]{gidasSymmetryPositiveSolutions1981}), it follows that
\[
\eqref{expansion:2.2}
= 
\frac{1}{\beta_\eta \parens{1}}
\sum_{k_1, k_2 \in \set{1, \ldots, K}} \parens*{
	\delta_{k_1} \delta_{k_2} \Phi \parens{\zeta_r^{k_1}} \Phi \parens{\zeta_r^{k_2}}
}.
\]
As such, it suffices to argue as in the previous expansion to deduce that there exists
$\eta_0 > 1$ such that
\[
\abs*{
	\eqref{expansion:2.2}
	-
	\frac{\Phi \parens{r}^2}{\eta}
}
\lesssim
\frac{\Phi \parens{3 r} \Phi \parens{r}}{\eta}
\lesssim
\frac{1}{\eta^5}
\]
for every $\eta \geq \eta_0$ and $r \in R_\eta$.

\paragraph{Expansion of $\eqref{expansion:3}$.}
We want to prove that there exists $\eta_0 > 1$ such that
\[
\abs*{
	\eqref{expansion:3}
	-
	\chi
	\int \Phi_{3 r} \Phi^{p - 1} \dif \Lebesgue
}
\lesssim
\frac
	{r^{\frac{p - 1 + \parens{3 - p} N}{2}}}
	{\eta^\ell}
\]
for every $\eta \geq \eta_0$ and $r \in R_\eta$. It is clear that
$
\eqref{expansion:3}
=
\chi
\int \Phi_{3 r} \Phi^{p - 1} \dif \Lebesgue
$
when $K \in \set{2, 3}$, so suppose that
$K \geq 4$. Suppose that
$k_1, k_2 \in \set{1, \ldots, K}$ are such that
$\abs{\zeta^{k_1}_r - \zeta^{k_2}_r} > 3 r$. Due to Corollary \ref{cor:entire-space}, there exists $\eta_0 \in \ooi{1, \infty}$ such that
\[
\abs*{\int \Phi_{\zeta_r^{k_1} - \zeta_r^{k_2}} \Phi^{p - 1} \dif \Lebesgue}
\lesssim
\frac
	{r^{\frac{p - 1 + \parens{3 - p} N}{2}}}
	{\eta^\ell}
\]
for every $\eta \in \coi{\eta_0, \infty}$ and
$r \in R_\eta$, hence the result.

\paragraph{Estimation of $\abs{\eqref{expansion:5}}$.} In view of Item \eqref{elementary:2} in Lemma \ref{lem:elementary},
\begin{multline*}
\abs*{\eqref{expansion:5}}
\leq
\sum_{\substack{
	1 \leq k_1, k_2, k_3 \leq K;
	\\
	k_1 \neq k_3; k_2 \neq k_3
}}
\int
	\Phi_{\zeta_r^{k_1} - \zeta_r^{k_3}}
	\Phi_{\zeta_r^{k_2} - \zeta_r^{k_3}}
	\Phi^{p - 2}
\dif \Lebesgue
\leq
\\
\leq
\frac{1}{2}
\sum_{\substack{
	1 \leq k_1, k_2, k_3 \leq K;
	\\
	k_1 \neq k_3; k_2 \neq k_3
}}
\parens*{
\int
	\Phi_{\zeta_r^{k_2} - \zeta_r^{k_3}}^2
	\Phi^{p - 2}
\dif \Lebesgue
+
\int
	\Phi_{\zeta_r^{k_1} - \zeta_r^{k_3}}^2
	\Phi^{p - 2}
\dif \Lebesgue
}.
\end{multline*}
Corollary \ref{cor:entire-space} shows that there exists $\eta_0 > 1$ such that
\[
\abs*{\eqref{expansion:5}}
\lesssim
\frac
	{r^{\frac{N + 1}{2}}}
	{r^{\frac{\parens{p - 1} N}{2}}}
e^{- 6 r}
\lesssim
\frac
	{r^{\frac{N + 1}{2}}}
	{r^{\frac{\parens{p - 1} N}{2}}}
\eta^{- 6}
\]
for every $\eta \geq \eta_0$ and $r \in R_\eta$.

\paragraph{Estimation of $\abs{\eqref{expansion:4}}$.}
Due to Item \eqref{elementary:4} in Lemma \ref{lem:elementary}, there exists $\eta_0 > 1$ such that
\[
\abs*{\eqref{expansion:4}}
\lesssim
\frac{1}{\eta^2}
\abs*{\sum_{1 \leq k \leq K} \delta_k \Phi \parens{\zeta_r^k}}^2
+
\frac{1}{\eta^p}
\abs*{\sum_{1 \leq k \leq K} \delta_k \Phi \parens{\zeta_r^k}}^p
\lesssim
\frac{1}{r^{N - 1} \eta^4}
\]
for every $\eta \geq \eta_0$  and $r \in R_\eta$.
\end{proof}

Now, we use the previous result to expand
$\overline{\sigma_\eta}$.

\begin{cor}
\label{cor:expansion}
There exists $\eta_0 > 1$ such that
\begin{multline*}
\abs*{\overline{\sigma_\eta} \parens{r} - F_\eta \parens{r}}
\lesssim
r^{\frac{\parens{3 - p} N + p - 1}{p '}}
\times
\begin{cases}
\eta^{- \min \parens*{
	4,
	\frac{6}{p '},
	2
	\parens*{
		2 \parens{p - 2} + \frac{1}{p '}
	}
}},
&\text{if} ~ K \in \set{2, 3};
\\
\\
\eta^{- \min \parens*{
	\ell,
	4,
	\frac{6}{p '},
	2
	\parens*{
		2 \parens{p - 2} + \frac{1}{p '}
	}
}},
&\text{if} ~ K \geq 4
\end{cases}
\end{multline*}
for every $\eta \geq \eta_0$ and
$r \in \overline{R_\eta}$.
\end{cor}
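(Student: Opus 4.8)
The plan is to expand $\overline{\sigma_\eta}(r) = S_\eta(W_{\eta,r} + \nu_{\eta,r})$ around $S_\eta(W_{\eta,r})$ using a Taylor expansion in the correction term $\nu_{\eta,r}$, and then invoke Lemma \ref{lem:expansion} to compare $S_\eta(W_{\eta,r})$ with $F_\eta(r)$. First I would write, by the mean value theorem (or a second-order Taylor expansion of $S_\eta$ along the segment from $W_{\eta,r}$ to $W_{\eta,r} + \nu_{\eta,r}$),
\[
S_\eta(W_{\eta,r} + \nu_{\eta,r})
=
S_\eta(W_{\eta,r})
+
\angles*{\nabla S_\eta(W_{\eta,r}) \mid \nu_{\eta,r}}_{H^1_\eta}
+
\frac{1}{2} S_\eta''(\xi_{\eta,r})[\nu_{\eta,r}, \nu_{\eta,r}]
\]
for a suitable $\xi_{\eta,r}$ on that segment. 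The linear term is bounded by $\norm{\nabla S_\eta(W_{\eta,r})}_{H^1_\eta} \norm{\nu_{\eta,r}}_{H^1_\eta}$, and the quadratic term is controlled by writing $S_\eta''(\xi_{\eta,r}) = S_\eta''(W_{\eta,r}) + (S_\eta''(\xi_{\eta,r}) - S_\eta''(W_{\eta,r}))$, using that $S_\eta''(W_{\eta,r})$ is bounded (which follows from $\abs{\eqref{uniform:1}} \lesssim \eta^{-2(p-2)} \norm{\nu}^2_{H^1_\eta}$ together with the boundedness of $S_\infty''$ at a sum of translates and of $\beta_\eta(1)$-rescalings, all of which appear in the proof of Lemma \ref{lem:uniform-inversion}) and bounding the variation term via Lemma \ref{lem:estimate-second-derivative} by $2\norm{\nu_{\eta,r}}^{p-2}_{H^1_\eta}\norm{\nu_{\eta,r}}^2_{H^1_\eta}$. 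Since $\norm{\nu_{\eta,r}}_{H^1_\eta} \to 0$, the dominant contribution among these three error terms is the linear one, of order $\norm{\nabla S_\eta(W_{\eta,r})}_{H^1_\eta}\norm{\nu_{\eta,r}}_{H^1_\eta}$.

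Next I would feed in the two available bounds. By Lemma \ref{lem:implicit-function} we have $\norm{\nu_{\eta,r}}_{H^1_\eta} \lesssim \norm{\nabla S_\eta(W_{\eta,r})}_{H^1_\eta}$, so the linear error term — and hence $\abs{\overline{\sigma_\eta}(r) - S_\eta(W_{\eta,r})}$ — is $\lesssim \norm{\nabla S_\eta(W_{\eta,r})}_{H^1_\eta}^2$. By Lemma \ref{lem:pseudo-critical},
\[
\norm{\nabla S_\eta(W_{\eta,r})}_{H^1_\eta}^2
\lesssim
\frac{r^{\frac{(3-p)N + p - 1}{p'}}}{\eta^{2\min\parens*{\frac{3}{p'},\, 2(p-2) + \frac{1}{p'}}}}
=
r^{\frac{(3-p)N+p-1}{p'}}\,
\eta^{-\min\parens*{\frac{6}{p'},\, 2(2(p-2)+\frac{1}{p'})}}.
\]
Combining this with the triangle inequality $\abs{\overline{\sigma_\eta}(r) - F_\eta(r)} \leq \abs{\overline{\sigma_\eta}(r) - S_\eta(W_{\eta,r})} + \abs{S_\eta(W_{\eta,r}) - F_\eta(r)}$ and the estimate of Lemma \ref{lem:expansion} (which gives $\eta^{-4}$ when $K \in \{2,3\}$ and $r^{\frac{p-1+(3-p)N}{2}}\eta^{-\min(\ell,4)}$ when $K \geq 4$), the exponent of $\eta$ in the final bound is the minimum of $4$, $\frac{6}{p'}$, $2(2(p-2)+\frac{1}{p'})$ — together with $\ell$ in the case $K\geq 4$ — as claimed; one checks the powers of $r$ are dominated on $\overline{R_\eta}$ by $r^{\frac{(3-p)N+p-1}{p'}}$ since $r^{\frac{p-1+(3-p)N}{2}} \lesssim r^{\frac{(3-p)N+p-1}{p'}}$ there because $p' < 2$ and $r \to \infty$ uniformly on $R_\eta$ by Lemma \ref{lem:asymptotic}.

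The main obstacle is making the Taylor-expansion step rigorous on the energy space $H^1_\eta$ rather than on $H^1$: one must verify that $r \mapsto \nu_{\eta,r}$ and the functionals involved are smooth enough for the mean-value form to apply (which is covered by the $C^1$ regularity in Lemma \ref{lem:implicit-function} and the smoothness of $S_\eta$), and, more subtly, that the boundedness of $S_\eta''(W_{\eta,r})$ as an operator on $H^1_\eta$ holds uniformly in $\eta \geq \eta_0$ and $r \in R_\eta$. This uniformity is exactly what was extracted inside the proof of Lemma \ref{lem:uniform-inversion}; I would simply quote it. Everything else is bookkeeping: expanding the square of the gradient norm, tracking the $r$-powers, and confirming that the contribution from Lemma \ref{lem:expansion} never beats the $\norm{\nabla S_\eta(W_{\eta,r})}_{H^1_\eta}^2$ term except through the exponents $4$ and (when $K \geq 4$) $\ell$, which is why those appear in the final minimum.
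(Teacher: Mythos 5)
Your proposal is correct and follows essentially the same route as the paper: a Taylor/mean-value expansion of $S_\eta$ around $W_{\eta, r}$ along $\nu_{\eta, r}$, with the linear term controlled by Lemmas \ref{lem:pseudo-critical} and \ref{lem:implicit-function}, the quadratic remainder by the boundedness of $S_\eta'' \parens{W_{\eta, r}}$ together with Lemma \ref{lem:estimate-second-derivative}, and the comparison to $F_\eta$ via Lemma \ref{lem:expansion}. The only cosmetic difference is that the paper bounds $\norm{S_\eta'' \parens{W_{\eta, r}}}$ directly by $1 + \parens{p - 1} \norm{W_{\eta, r}}_{H^1_\eta}^{p - 2}$ rather than quoting the computations inside Lemma \ref{lem:uniform-inversion}.
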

\begin{proof}
It follows from the Mean Value Inequality that
\[
\abs*{
	\overline{\sigma_\eta} \parens{r}
	-
	S_\eta \parens{W_{\eta, r}}
	-
	\eqref{cor:expansion:1}
}
\leq
\eqref{cor:expansion:2},
\]
where
\begin{equation}
\label{cor:expansion:1}
\angles*{\nabla S_\eta \parens{W_{\eta, r}} ~\middle|~ \nu_{\eta, r}}_{H^1_\eta}
\end{equation}
and
\begin{equation}
\label{cor:expansion:2}
\frac{1}{2}
\norm{\nu_{\eta, r}}_{H^1_\eta}^2
\parens*{
	\max_{t \in \cci{0, 1}}
		\norm*{S_\eta'' \parens{W_{\eta, r} + t \nu_{\eta, r}}}
}.
\end{equation}

\paragraph{Estimation of $\abs{\eqref{cor:expansion:1}}$.}
Due to Lemmas \ref{lem:pseudo-critical} and \ref{lem:implicit-function}, there exists
$\eta_0 > 1$ such that
\[
\abs*{\eqref{cor:expansion:1}}
\lesssim
\frac
	{r^{\frac{\parens{3 - p} N + p - 1}{p '}}}
	{
		\eta^{
			2
			\min
			\parens*{
				\frac{3}{p '},
				2 \parens{p - 2} + \frac{1}{p '}
			}
		}
	}
\]
for every $\eta \geq \eta_0$.

\paragraph{Estimation of $\abs{\eqref{cor:expansion:2}}$.}
It is easy to verify that
\[
\norm*{S_\eta'' \parens{W_{\eta, r}}}_{\mathrm{Bil} \parens{H^1_\eta}}
\leq
1 + \parens{p - 1} \norm{W_{\eta, r}}_{H^1_\eta}^{p - 2}.
\]
As such, we can use Lemmas \ref{lem:pseudo-critical}, \ref{lem:implicit-function} and \ref{lem:estimate-second-derivative} to estimate $\abs{\eqref{cor:expansion:2}}$ with the arguments in the previous paragraph.
\end{proof}

Finally, we can use the expansion of
$\overline{\sigma_\eta}$ to prove that
$\sigma_\eta$ has a minimum point and deduce the existence of solution to the bifurcation equation.

\begin{lem}
\label{lem:existence-of-minimum}
There exists $\eta_0 > 1$ such that given
$\eta \geq \eta_0$, $\sigma_\eta$ has a minimum point.
\end{lem}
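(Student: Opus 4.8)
The plan is to argue that the continuous extension $\overline{\sigma_\eta}\colon\overline{R_\eta}\to\real$ attains its minimum at a point of the \emph{open} set $R_\eta$; such a point is then automatically a minimum point of $\sigma_\eta$ (and, via Lemma \ref{lem:natural-constraint}, a genuine solution of the system \eqref{eqn:system}). I will work from the identity
\[
F_\eta(r)
=
KC_0
+
\parens*{\chi-\frac{g(r)}{2\eta}}\,Q(r),
\qquad
Q(r):=\int\Phi_{3r}\Phi^{p-1}\dif\Lebesgue,
\quad
g(r):=\frac{\Phi(r)^2}{Q(r)},
\]
which follows from the definition of $F_\eta$ in Lemma \ref{lem:expansion} and $\Phi(r)^2=g(r)Q(r)$, together with $R_\eta=\set{r>0:\eta/\log\eta<g(r)<c\eta}$. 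First I would record that $g$ is continuous on $\ooi{0,\infty}$, bounded near $0$, and $g(r)\to\infty$ as $r\to\infty$ (by \eqref{eqn:Phi-limit}, \eqref{lem:asymptotic:1} and Lemma \ref{lem:interaction}), so for large $\eta$ the intermediate value theorem produces $r_\eta>0$ with $g(r_\eta)=3\chi\eta$, and $\eta/\log\eta<3\chi\eta<4\chi\eta<c\eta$ shows $r_\eta\in R_\eta$; moreover $R_\eta$ is bounded by Lemma \ref{lem:asymptotic}, so $\overline{R_\eta}$ is compact and nonempty and $\overline{\sigma_\eta}$ attains its minimum at some $\bar r\in\overline{R_\eta}$. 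Since $R_\eta$ is open, it remains only to rule out $\bar r\in\partial R_\eta$, where necessarily $g(\bar r)\in\set{\eta/\log\eta,c\eta}$ by continuity.

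The comparison point $r_\eta$ is chosen because $A\mapsto(\chi-A/2)A^{-3}$ is minimized at $A=3\chi$, i.e.\ $r_\eta$ locates the asymptotic minimum of $F_\eta$; by construction $F_\eta(r_\eta)=KC_0-\tfrac{\chi}{2}Q(r_\eta)$. The quantitative inputs are: (i) the asymptotics $g(r)\sim\theta_\Phi(3/r)^{(N-1)/2}e^{r}$ and $Q(r)\sim\theta_\Phi 3^{-(N-1)/2}r^{-(N-1)/2}e^{-3r}$ as $r\to\infty$, from \eqref{eqn:Phi-limit}, \eqref{lem:asymptotic:1} and Lemma \ref{lem:interaction}; (ii) every $r\in\overline{R_\eta}$ satisfies $r=(1+o(1))\log\eta$ (Lemma \ref{lem:asymptotic}) and $g(r)\in\cci{\eta/\log\eta,c\eta}$, whence $Q(r)\gtrsim\eta^{-3}(\log\eta)^{-2(N-1)}$ uniformly on $\overline{R_\eta}$; and (iii) the remainder of Corollary \ref{cor:expansion}, which on $\overline{R_\eta}$ is $\lesssim(\log\eta)^{\frac{(3-p)N+p-1}{p'}}\eta^{-\mu}$ with $\mu>3$ — here $\mu>3$ uses $p>p_*$ (which makes $2(2(p-2)+\tfrac1{p'})>3$), the easy bounds $4>3$ and $\tfrac6{p'}>3$, and, for $K\ge 4$, the polygon condition $\ell>3$; note also that the upstream hypothesis $p\le 3$ is what allows Corollary \ref{cor:expansion} to hold at all. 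Combining (ii) and (iii), $\abs{\overline{\sigma_\eta}(r)-F_\eta(r)}=o\parens{Q(r')}$ uniformly for $r,r'\in\overline{R_\eta}$.

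It then remains to treat the two boundary alternatives. If $g(\bar r)=\eta/\log\eta$, the identity gives $F_\eta(\bar r)=KC_0+(\chi-\tfrac1{2\log\eta})Q(\bar r)\ge KC_0+\tfrac\chi2 Q(\bar r)$ for large $\eta$, so $\overline{\sigma_\eta}(\bar r)\ge KC_0+\tfrac\chi2 Q(\bar r)-o(Q(\bar r))$, while $\overline{\sigma_\eta}(r_\eta)\le KC_0-\tfrac\chi2 Q(r_\eta)+o(Q(r_\eta))$; since $Q(\bar r),Q(r_\eta)\gtrsim\eta^{-3}(\log\eta)^{-2(N-1)}$, this forces $\overline{\sigma_\eta}(r_\eta)<\overline{\sigma_\eta}(\bar r)$, contradicting minimality. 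If $g(\bar r)=c\eta$, then $F_\eta(\bar r)=KC_0-(\tfrac c2-\chi)Q(\bar r)$ with $\tfrac c2-\chi>\chi>0$; from the ratio asymptotics in (i) and $r_\eta,\bar r=(1+o(1))\log\eta$ one gets $Q(r_\eta)/Q(\bar r)\to(c/3\chi)^3$, hence
\[
F_\eta(\bar r)-F_\eta(r_\eta)
=
\tfrac\chi2 Q(r_\eta)-\parens*{\tfrac c2-\chi}Q(\bar r)
=
\bigl(\tfrac{\chi}{54}(t-3)^2(t+6)+o(1)\bigr)Q(\bar r),
\qquad t:=c/\chi>4,
\]
using the factorization $\tfrac{t^3}{54}-\tfrac t2+1=\tfrac{(t-3)^2(t+6)}{54}$; since $(t-3)^2(t+6)>0$ and the $o(1)$ absorbs the remainder of (iii), again $\overline{\sigma_\eta}(r_\eta)<\overline{\sigma_\eta}(\bar r)$, a contradiction.

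Hence $\bar r\in R_\eta$, so $\sigma_\eta$ has a minimum point, as claimed. The main obstacle is the second boundary case: near the outer boundary $g(r)=c\eta$ the profile $F_\eta$ actually drops below its ``free'' value $KC_0$, so dismissing a boundary minimum there is not a soft argument — it genuinely needs the sharp exponential asymptotics of $Q(r)=\int\Phi_{3r}\Phi^{p-1}\dif\Lebesgue$ (hence Lemma \ref{lem:interaction} and \eqref{eqn:Phi-limit}) and the quantitative separation $c>4\chi$, and it is precisely the requirement that the remainder of Corollary \ref{cor:expansion} be $o$ of the $\eta^{-3}$-scale of the two competing terms in $F_\eta$ that makes $\mu>3$, i.e.\ $p>p_*$, indispensable.
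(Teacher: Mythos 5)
Your proof is correct, and while it follows the same skeleton as the paper's (minimize the continuous extension $\overline{\sigma_\eta}$ on the compact set $\overline{R_\eta}$, then exclude boundary minima by comparison with an interior point, absorbing the remainder from Corollary \ref{cor:expansion} thanks to $\mu>3$), the decisive quantitative step is handled differently. The paper compares against the level $g(\widetilde r_n)=\tfrac c2\eta_n$, runs a contradiction argument along sequences $\eta_n\to\infty$, and closes both boundary cases by invoking the limit \eqref{eqn:limit}, i.e.\ that the ratio of the two interaction integrals tends to $1$, as a consequence of Lemma \ref{lem:asymptotic}. You instead place the comparison point at the asymptotic minimizer $g(r_\eta)=3\chi\eta$ of $A\mapsto(\chi-A/2)A^{-3}$ and compute the limit of the ratio exactly, $Q(r_\eta)/Q(\bar r)\to(c/3\chi)^3$, from the sharp asymptotics $Q(r)\asymp g(r)^{-3}r^{-2(N-1)}$ (Lemmas \ref{lem:error-in-limit}, \ref{lem:interaction}), closing the outer case with the factorization $\tfrac{t^3}{54}-\tfrac t2+1=\tfrac{(t-3)^2(t+6)}{54}>0$ for $t=c/\chi>4$. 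This buys you something real: Lemma \ref{lem:asymptotic} only gives $r=(1+o(1))\log\eta$, which controls $Q$ at two admissible radii only up to factors $e^{o(\log\eta)}$, so it does not by itself force the ratio in \eqref{eqn:limit} to tend to $1$ (indeed, at distinct $g$-levels $A_1\eta$, $A_2\eta$ the ratio tends to $(A_2/A_1)^3$); your explicit evaluation of the limit sidesteps this entirely and makes the argument self-contained and valid for every fixed $c>4\chi$. Two cosmetic points: the prefactor in your asymptotic for $Q$ should be $\theta_\Phi^2$ rather than $\theta_\Phi$ (harmless, since only ratios enter), and in the inner case you do not even need the ratio — the main terms already have opposite signs, exactly as in the paper.
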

\begin{proof}
The function $\overline{\sigma_\eta}$ is continuous and $\overline{R_{\eta}}$ is compact, so it is clear that $\overline{\sigma_\eta}$ has a minimum point. We want to prove that if $\eta$ is sufficiently large, then any minimum point of $\overline{\sigma_\eta}$ is not on $\partial R_\eta$. By contradiction, suppose that there exist
\begin{itemize}
\item
$\set{\eta_n}_{n \in \nat} \subset \ooi{1, \infty}$ such that $\eta_n \to \infty$ as
$n \to \infty$;
\item
a set
\[
\set*{
	r_{\eta_n} > 0:
	r_{\eta_n} \in \partial R_{\eta_n}
	\quad \text{and} \quad
	\overline{\sigma_{\eta_n}} \parens{r_{\eta_n}} = \min \overline{\sigma_{\eta_n}}
}_{n \in \nat}.
\]
\end{itemize}
To simplify the notation, we will henceforth denote the dependence on $\eta_n$ by the dependence on $n$. An application of the Pigeonhole Principle shows that, up to subsequence, one of the following holds:
\begin{enumerate}
\item
\label{case1}
$
\Phi \parens{r_n}^2 \parens{\int \Phi_{3 r_n} \Phi^{p - 1} \dif \Lebesgue}^{- 1}
=
\frac{\eta_n}{\log \eta_n}
$
for every $n \in \nat$ or
\item
\label{case2}
$
\Phi \parens{r_n}^2 \parens{\int \Phi_{3 r_n} \Phi^{p - 1} \dif \Lebesgue}^{- 1}
=
c \eta_n
$
for every $n \in \nat$.
\end{enumerate}
Before considering the cases, fix a collection
\[
\set*{
	\widetilde{r}_n > 0:
	\Phi \parens{\widetilde{r}_n^2}
	\parens*{\int \Phi_{3 \widetilde{r}_n} \Phi^{p - 1} \dif \Lebesgue}^{- 1}
	=
	\frac{c}{2} \eta_n
}_{n \in \nat}.
\]
Up to discarding a finite number of indices, $\widetilde{r}_n \in R_n$ for every
$n \in \nat$.

\paragraph{Case \eqref{case1}.}
Up to a higher order error estimated in Corollary \ref{cor:expansion}, it holds that
$F_n \parens{r_n} \leq F_n \parens{\widetilde{r}_n}$. That is,
\[
-
\frac{\Phi \parens{r_n}^2}{2 \eta}
+
\chi
\int \Phi_{3 r_n} \Phi^{p - 1} \dif \Lebesgue
\leq
-
\frac{\Phi \parens{\widetilde{r}_n}^2}{2 \eta}
+
\chi
\int \Phi_{3 \widetilde{r}_n} \Phi^{p - 1} \dif \Lebesgue.
\]
It follows from the definition of $\widetilde{r}_n$ and $r_n$ that
\[
\parens*{\chi - \frac{1}{2 \log \eta_n}}
\int \Phi_{3 r_n} \Phi^{p - 1} \dif \Lebesgue
\leq
\parens*{\chi - \frac{c}{4}}
\int \Phi_{3 \widetilde{r}_n} \Phi^{p - 1} \dif \Lebesgue.
\]
Due to Lemma \ref{lem:asymptotic},
\begin{equation}
\label{eqn:limit}
\frac
{
	\int
		\Phi_{3 \widetilde{r}_n} \Phi^{p - 1} 
	\dif \Lebesgue
}
{
	\int
		\Phi_{3 r_n} \Phi^{p - 1}	
	\dif \Lebesgue
}
\xrightarrow[n \to \infty]{}
1.
\end{equation}
We obtained a contradiction because $\chi > 0$ and $\chi - \frac{c}{4} < 0$.

\paragraph{Case \eqref{case2}.}
It suffices to argue as in the previous case to deduce that, up to a higher order error,
\[
\frac
{
	\int
		\Phi_{3 r_n} \Phi^{p - 1}	
	\dif \Lebesgue
}
{
	\int
		\Phi_{3 \widetilde{r}_n} \Phi^{p - 1} 
	\dif \Lebesgue1
}
\leq
\frac{1}{2} \times \frac{c - 4 \chi}{c - 2 \chi}
<
\frac{1}{2}
\]
for sufficiently large $n$. This fact is in contradiction with \eqref{eqn:limit}.
\end{proof}

\subsection{Proof of Theorem \ref{thm}}

Fix $\eta_0 > 1$ which satisfies the hypotheses of Lemmas \ref{lem:implicit-function}, \ref{lem:natural-constraint} and \ref{lem:existence-of-minimum}. Due to Lemma \ref{lem:existence-of-minimum}, there exists a collection
\[
\set*{
	r_\eta > 0:
	r_\eta \in R_\eta
	\quad \text{and} \quad
	\sigma_\eta \parens{r_\eta} = \min \sigma_\eta
}_{\eta \geq \eta_0}.
\]
Set
$u_\eta = W_{\eta, r_\eta} + \nu_{\eta, r_\eta}$ for every
$\eta \geq \eta_0$, so that Lemma \ref{lem:natural-constraint} ensures the equality
$\nabla S_\eta \parens{u_\eta} = 0$. To finish, the limits in the theorem follow from Lemmas \ref{lem:asymptotic}--\ref{lem:implicit-function}.

\qed

\appendix
\section{An equivalent problem}
\label{sect:equivalent-problem}

In this section, we proceed similarly as in \cite[Section 5]{fukayaStabilityInstabilityStanding2022} to show that, up to rescaling,
\begin{equation}
\label{eqn:delta-NLS-2}
- \Delta_\alpha u + \omega u = u \abs{u}^{p - 2}
\end{equation}
is equivalent to
\[- \Delta_{\alpha_\omega} u + u = u \abs{u}^{p - 2},\]
where
\begin{equation}
\label{eqn:alpha_omega}
\alpha_\omega
:=
\begin{cases}
{\displaystyle \alpha + \frac{1}{2 \pi} \log \sqrt{\omega}},
&\text{if} ~ N = 2;
\\
\\
{\displaystyle \frac{\alpha}{\sqrt{\omega}}},
&\text{if} ~ N = 3.
\end{cases}
\end{equation}
Suppose that
$u_\omega = \phi_\omega + q G_\omega$ solves \eqref{eqn:delta-NLS-2}. Consider the rescaling
\[
\widetilde{\phi}_\omega \parens{x}
:=
\omega^{- \frac{1}{p - 2}} \phi_\omega \parens{\omega^{- \frac{1}{2}} x},
\]
so that
\[
\phi_\omega \parens{x}
=
\omega^{\frac{1}{p - 2}}
\widetilde{\phi}_\omega \parens{\omega^{\frac{1}{2}} x}
\]
and
\[
- \Delta \phi_\omega \parens{x}
=
-
\omega^{\frac{p - 1}{p - 2}}
\Delta \widetilde{\phi}_\omega \parens{\omega^{\frac{1}{2}} x}
\]
Due to these equalities and \eqref{eqn:trick},
\[
\parens{- \Delta_\alpha + \omega} u_\omega \parens{x}
=
\parens{- \Delta + \omega} \phi_\omega \parens{x}
=
\omega^{\frac{p - 1}{p - 2}}
\parens{- \Delta + 1}
\widetilde{\phi}_\omega \parens{\omega^{\frac{1}{2}} x}
\]~
Similarly,
\[
u_\omega \parens{x} \abs*{u_\omega \parens{x}}^{p - 2}
=
\omega^{\frac{p - 1}{p - 2}}
\widetilde{u}_\omega \parens{x}
\abs*{\widetilde{u}_\omega \parens{x}}^{p - 2},
\]
where
\[
\widetilde{u}_\omega \parens{x}
:=
\omega^{- \frac{1}{p - 2}} u_\omega \parens{\omega^{- \frac{1}{2}} x}
=
\widetilde{\phi}_\omega \parens{x}
+
q
\omega^{\parens*{\frac{N - 2}{2} - \frac{1}{p - 2}}}
G \parens{x}.
\]
Therefore,
\[
- \Delta \widetilde{\phi}_\omega + \widetilde{\phi}_\omega
=
\widetilde{u}_\omega \abs{\widetilde{u}_\omega}^{p - 2}.
\]
Let us compute the quantity $\alpha_\omega \in \real$ such that
$\widetilde{u}_\omega \in \Dom \parens{- \Delta_{\alpha_\omega}}$.
On one hand,
\[
q \beta_\alpha \parens{\omega}
=
\phi_\omega \parens{0}
=
\omega^{\frac{1}{p - 2}}
\widetilde{\phi}_\omega \parens{0}.
\]
On the other hand,
\[
\widetilde{\phi}_\omega \parens{0}
=
q
\omega^{\parens*{\frac{N - 2}{2} - \frac{1}{p - 2}}}
\beta_{\alpha_\omega} \parens{1}.
\]
Therefore, either $q = 0$ or
$
\beta_{\alpha_\omega} \parens{1}
=
\omega^{- \frac{N - 2}{2}} \beta_\alpha \parens{\omega}
$.
This equality yields \eqref{eqn:alpha_omega} and we obtain
\[
- \Delta_{\alpha_\omega} \widetilde{u}_\omega + \widetilde{u}_\omega
=
\widetilde{u}_\omega \abs{\widetilde{u}_\omega}^{p - 2}.
\]

\section{Technical estimates}
\label{sect:technical}

For the sake of completeness, we provide the proof of the following elementary inequalities that are essential for many arguments in the paper.

\begin{lem}
\label{lem:elementary}
\begin{enumerate}
\item
\label{elementary:1}
If $0 < r \leq 1$, then
$\abs{\abs{a + b}^r - \abs{a}^r} \leq \abs{b}^r$
for every $a, b \in \real$.
\item
\label{elementary:2}
If $K \in \nat$ and $2 < r \leq 3$, then
\begin{multline*}
\abs*{
	\abs*{\sum_{1 \leq k \leq K} a_k}^r
	-
	\parens*{\sum_{1 \leq k \leq K} \abs{a_k}^r}
	-
	2
	\sum_{\substack{
		1 \leq k_1, k_2 \leq K;
		\\
		k_1 \neq k_2
	}}
	\parens*{a_{k_1} a_{k_2} \abs{a_{k_2}}^{r - 2}}
}
\leq
\\
\leq
r
\sum_{\substack{
	1 \leq k_1, k_2, k_3 \leq K;
	\\
	k_1 \neq k_3; k_2 \neq k_3
}}
	\parens*{\abs{a_{k_1}} \abs{a_{k_2}} \abs{a_{k_3}}^{r - 2}}
\end{multline*}
for every $a_1, \ldots, a_k \in \real$.
\item
\label{elementary:4}
If $M > 0$ and $r \geq 2$, then
\[
	\abs*{\abs{a + b}^r - \abs{a}^r - r b a \abs{a}^{r - 2}}
	\lesssim
	\abs{b}^2 + \abs{b}^r
\]
for every $a \in \cci{- M, M}$ and $b \in \real$.
\end{enumerate}
\end{lem}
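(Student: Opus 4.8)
The plan is to handle the three items in order, disposing of Items~\eqref{elementary:1} and~\eqref{elementary:4} quickly and reducing Item~\eqref{elementary:2} to its case $K = 2$. For Item~\eqref{elementary:1}, I would first record that $\coi{0, \infty} \ni t \mapsto t^r$ is subadditive when $0 < r \leq 1$: for $s, t \geq 0$ not both zero, dividing by $(s + t)^r$ reduces $(s + t)^r \leq s^r + t^r$ to $\xi^r \geq \xi$ for $\xi \in \cci{0, 1}$, which holds since $r - 1 \leq 0$. Together with $|a + b| \leq |a| + |b|$ this gives $|a + b|^r \leq |a|^r + |b|^r$, hence $|a + b|^r - |a|^r \leq |b|^r$, and replacing $a$ by $(a + b) + (- b)$ gives $|a|^r - |a + b|^r \leq |b|^r$; the two combine to the claim. (Iterating subadditivity also yields $\abs{\sum_k a_k}^r \leq \sum_k |a_k|^r$, used repeatedly below.) For Item~\eqref{elementary:4}, since $r \geq 2$ the function $g(t) := |t|^r$ is of class $C^2$ with $g'(t) = r t |t|^{r - 2}$ and $g''(t) = r(r - 1) |t|^{r - 2}$ continuous, so the integral form of Taylor's remainder gives
\[
\abs*{|a + b|^r - |a|^r - r b a |a|^{r - 2}}
=
\abs*{b^2 \int_0^1 (1 - t) g''(a + t b) \dif t}
\leq
\frac{r(r - 1)}{2} b^2 \max_{t \in \cci{0, 1}} |a + t b|^{r - 2},
\]
and bounding $|a + t b| \leq M + |b|$ and then $(M + |b|)^{r - 2} \lesssim 1 + |b|^{r - 2}$ for every $b \in \real$ (splitting into $|b| \leq 1$ and $|b| > 1$, using $r - 2 \geq 0$) turns the right-hand side into a constant multiple of $b^2 + |b|^r$.

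The substantial item is~\eqref{elementary:2}, which I would prove by induction on $K$ with base case $K = 2$. Writing $f(t) := t |t|^{r - 2}$, the identities $g(x + y) - g(x) = r \int_0^y f(x + s) \dif s$ and $f(x + s) - f(s) = (r - 1) \int_0^x |u + s|^{r - 2} \dif u$ give
\[
|x + y|^r - |x|^r - |y|^r
=
r(r - 1) \int_0^y \int_0^x |u + s|^{r - 2} \dif u \dif s,
\]
while $\int_0^x |u|^{r - 2} \dif u = \frac{1}{r - 1} x |x|^{r - 2}$ gives, by a parallel manipulation, $2 f(x) y + 2 f(y) x = 2(r - 1) \int_0^y \int_0^x \parens{|u|^{r - 2} + |s|^{r - 2}} \dif u \dif s$. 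Subtracting, the $K = 2$ quantity $P$ equals $(r - 1) \int_0^y \int_0^x \parens{r |u + s|^{r - 2} - 2 |u|^{r - 2} - 2 |s|^{r - 2}} \dif u \dif s$; by Item~\eqref{elementary:1} --- which here forces $r - 2 \leq 1$, i.e.\ $r \leq 3$ --- the integrand lies in $\cci{- 2(|u|^{r - 2} + |s|^{r - 2}),\ (r - 2)(|u|^{r - 2} + |s|^{r - 2})}$, so its absolute value is $\leq 2(|u|^{r - 2} + |s|^{r - 2})$. Integrating yields $\abs{P} \leq 2 \parens{|y| \, |x|^{r - 1} + |x| \, |y|^{r - 1}}$, and a short one-variable estimate --- by symmetry assume $|y| \leq |x|$ and put $t = |y| / |x| \in \cci{0, 1}$, reducing to $2(t^{3 - r} + t) \leq r(1 + t^{4 - r})$, which follows by splitting at $t = 2/r$ and using $2 \leq r \leq 3$ --- shows $\abs{P} \leq r \parens{|x|^2 |y|^{r - 2} + |y|^2 |x|^{r - 2}}$, which is exactly Item~\eqref{elementary:2} for $K = 2$.

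For the inductive step $K \to K + 1$, set $\sigma := a_1 + \cdots + a_K$. Adding and subtracting the relevant cross terms decomposes the left-hand side of the inequality for $(a_1, \ldots, a_{K + 1})$ as the sum of (i) the left-hand side for $(a_1, \ldots, a_K)$, bounded by the inductive hypothesis; (ii) the $K = 2$ quantity at $(\sigma, a_{K + 1})$, bounded by the base case after using Item~\eqref{elementary:1} to replace $|\sigma|^2$ and $|\sigma|^{r - 2}$ by $\parens*{\sum_{k \leq K} \abs{a_k}}^2$ and $\sum_{k \leq K} \abs{a_k}^{r - 2}$; and (iii) a leftover $2 a_{K + 1} \parens{f(\sigma) - \sum_{k \leq K} f(a_k)}$, bounded by the cross-term estimate $\abs*{f\parens{\sum_k a_k} - \sum_k f(a_k)} \leq (r - 1) \sum_{k_1 \neq k_2} |a_{k_1}| \, |a_{k_2}|^{r - 2}$, which telescopes out of $\abs{f(x + y) - f(x) - f(y)} = (r - 1) \abs*{\int_0^x (|y + u|^{r - 2} - |u|^{r - 2}) \dif u} \leq (r - 1) |x| \, |y|^{r - 2}$ (Item~\eqref{elementary:1} once more). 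Matching these three contributions against the triple-index sum on the right for $K + 1$, organized according to which of $k_1, k_2, k_3$ equals $K + 1$, closes the induction. I expect the $K = 2$ base case to be the main obstacle: obtaining the \emph{sharp} constant $r$ (rather than merely an $O(1)$ bound) forces one to exploit the cancellation made visible by the double-integral representation and to invoke $p \leq 3$ exactly where $t \mapsto t^{r - 2}$ must be subadditive and where $t^{3 - r}$ must stay bounded as $t \to 0^+$; the inductive step is then routine but needs careful bookkeeping so that every estimate lands inside the prescribed triple sum.
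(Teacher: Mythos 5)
Your proofs of Items \eqref{elementary:1} and \eqref{elementary:4} coincide with the paper's (sub-additivity of $t \mapsto t^r$, and a second-order Taylor/mean-value bound with $\parens{M + \abs{b}}^{r-2} \lesssim 1 + \abs{b}^{r-2}$, respectively). For Item \eqref{elementary:2} you take a genuinely different and, as far as I can check, correct route. The paper exploits the fact that
\[
g \parens{a_1, \ldots, a_K}
:=
\abs*{\textstyle\sum_k a_k}^r - \sum_k \abs{a_k}^r - 2 \sum_{k_1 \neq k_2} a_{k_1} a_{k_2} \abs{a_{k_2}}^{r-2}
\]
is positively homogeneous of degree $r$: applying the one-variable Mean Value Theorem along $t \mapsto g \parens{t a}$ and homogeneity of $\partial_k g$ yields $g \parens{a} = c^{r-1} \sum_k \partial_k g \parens{a} a_k$ with $c \in \ooi{0,1}$, hence $\abs{g \parens{a}} \leq \abs{\sum_k \partial_k g \parens{a} a_k}$; the latter is then rearranged (via $\abs{\Sigma}^r = \sum_{k_1} a_{k_1}^2 \abs{\Sigma}^{r-2} + \sum_{k_1 \neq k_2} a_{k_1} a_{k_2} \abs{\Sigma}^{r-2}$) into a form to which Item \eqref{elementary:1} with exponent $r - 2$ applies termwise, handling all $K$ simultaneously with no induction. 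You instead induct on $K$: your base case $K = 2$ rests on the exact representation $P = \parens{r-1} \int_0^y \int_0^x \parens{r \abs{u+s}^{r-2} - 2\abs{u}^{r-2} - 2\abs{s}^{r-2}} \dif u \, \dif s$, whose integrand is controlled by sub-additivity of $t \mapsto t^{r-2}$ (this is where $r \leq 3$ enters, exactly as in the paper), followed by the scalar inequality $2 \parens{t^{3-r} + t} \leq r \parens{1 + t^{4-r}}$ — which indeed holds, e.g.\ from $\parens{1-t}\parens{1 - t^{3-r}} \geq 0$ and $r \geq 2$; your inductive step correctly decomposes $G \parens{a_1,\ldots,a_{K+1}}$ into the $K$-term quantity, the pair quantity at $\parens{\sigma, a_{K+1}}$, and the leftover $2 a_{K+1} \parens{f\parens{\sigma} - \sum_{k \leq K} f\parens{a_k}}$, and the telescoped cross-term bound with constant $r - 1 < r$ leaves exactly enough room in the mixed block of the triple sum. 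The paper's argument is shorter and avoids bookkeeping; yours is longer but makes the cancellation and the provenance of the constant $r$ completely explicit, and the two-variable cross-term estimate you isolate is reusable.
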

\begin{proof}
~\paragraph{Proof of \eqref{elementary:1}.}
The function
$\coi{0, \infty} \ni t \mapsto t^r$
is increasing and sub-additive, so
\[\abs{a + b}^r \leq \parens*{\abs{a} + \abs{b}}^r \leq \abs{a}^r + \abs{b}^r\]
and
\[
\abs{a}^r
=
\abs*{\parens{a + b} + \parens{- b}}^r
\leq
\parens*{\abs{a + b} + \abs{b}}^r
\leq
\abs{a + b}^r + \abs{b}^r,
\]
hence the result.

\paragraph{Proof of \eqref{elementary:2}.} Consider the function
$g \colon \real^K \to \real$ defined as
\[
g \parens{x_1, \ldots, x_K}
=
\abs*{\sum_{1 \leq k \leq K} x_k}^r
-
\parens*{\sum_{1 \leq k \leq K} \abs{x_k}^r}
-
2
\sum_{\substack{1 \leq k_1, k_2 \leq K; \\ k_1 \neq k_2}}
	\parens*{x_{k_1} x_{k_2} \abs{x_{k_2}}^{r - 2}}.
\]
It is easy to verify that
\begin{multline*}
\partial_{k_3} g \parens{a_1, \ldots, a_K}
=
r
\parens*{\sum_{1 \leq k_2 \leq K} a_{k_2}}
\abs*{\sum_{1 \leq k_2 \leq K} a_{k_2}}^{r - 2}
-
r a_{k_3} \abs{a_{k_3}}^{r - 2}
-
\\
-
2
\sum_{\substack{1 \leq k_2 \leq K; \\ k_2 \neq k_3}}
	\parens*{a_{k_2} \abs{a_{k_2}}^{r - 2}}
-
2 \parens{r - 1}
\sum_{\substack{
	1 \leq k_1 \leq K;
	\\
	k_1 \neq k_3
}}
	\parens*{a_{k_1} \abs{a_{k_3}}^{r - 2}}.
\end{multline*}
Due to the Mean Value Theorem, there exists $c \in \ooi{0, 1}$ such that
\[
g \parens{a_1, \ldots, a_K}
=
c^{r - 1}
\sum_{1 \leq k \leq K} \parens*{\partial_k g \parens{a_1, \ldots, a_K} a_k}.
\]
Therefore,
\[
\abs*{g \parens{a_1, \ldots, a_K}}
\leq
\abs*{\sum_{1 \leq k \leq K} \parens*{\partial_k g \parens{a_1, \ldots, a_K} a_k}}.
\]
Let us estimate the RHS on this inequality. It is easy to verify that
\begin{multline*}
\sum_{1 \leq k_1 \leq K}
	\parens*{\partial_{k_1} g \parens{a_1, \ldots, a_K} a_{k_1}}
=
r \abs*{\sum_{1 \leq k_1 \leq K} a_{k_1}}^r
-
r \parens*{\sum_{1 \leq k_1 \leq K} \abs{a_{k_1}}^r}
-
\\
-
2
\sum_{\substack{1 \leq k_1, k_2 \leq K; \\ k_1 \neq k_2}}
	\parens*{a_{k_1} a_{k_2} \abs{a_{k_2}}^{r - 2}}
-
2 \parens{r - 1}
\sum_{\substack{1 \leq k_1, k_2 \leq K; \\ k_1 \neq k_2}}
	\parens*{a_{k_1} a_{k_2} \abs{a_{k_2}}^{r - 2}}
=
\\
=
r  \abs*{\sum_{1 \leq k_1 \leq K} a_{k_1}}^r
-
r \parens*{\sum_{1 \leq k_1 \leq K} \abs{a_{k_1}}^r}
-
2 r
\sum_{\substack{1 \leq k_1, k_2 \leq K; \\ k_1 \neq k_2}}
	\parens*{a_{k_1} a_{k_2} \abs{a_{k_2}}^{r - 2}}.
\end{multline*}
Clearly,
\begin{multline*}
\abs*{\sum_{1 \leq k \leq K} a_k}^r
=
\parens*{\sum_{1 \leq k \leq K} a_k}^2
\abs*{\sum_{1 \leq k \leq K} a_k}^{r - 2}
=
\\
=
\sum_{1 \leq k_1 \leq K}
	\parens*{a_{k_1}^2 \abs*{\sum_{1 \leq k_2 \leq K} a_{k_2}}^{r - 2}}
+
\sum_{\substack{1 \leq k_1, k_2 \leq K; \\ k_1 \neq k_2}}
	\parens*{a_{k_1} a_{k_2} \abs*{\sum_{1 \leq k_3 \leq K} a_{k_3}}^{r - 2}}.
\end{multline*}
By using this identity, we obtain
\begin{multline*}
\sum_{1 \leq k_1 \leq K}
	\parens*{\partial_{k_1} g \parens{a_1, \ldots, a_K} a_{k_1}}
=
\\
=
r
\sum_{1 \leq k_1 \leq K}
	\parens*{
		a_{k_1}^2
		\parens*{\abs*{\sum_{1 \leq k_2 \leq K} a_{k_2}}^{r - 2} - \abs{a_{k_1}}^{r - 2}}
	}
+
\\
+
r
\sum_{\substack{1 \leq k_1, k_2 \leq K; \\ k_1 \neq k_2}}
	\parens*{
		a_{k_1} a_{k_2}
		\parens*{
			\abs*{\sum_{1 \leq k_3 \leq K} a_{k_3}}^{r - 2}
			-
			\abs{a_{k_1}}^{r - 2}
			-
			\abs{a_{k_2}}^{r - 2}
		}
	}.
\end{multline*}
At this point, the result follows from \eqref{elementary:1}.

\paragraph{Proof of \eqref{elementary:4}.} By the Mean Value Inequality,
\begin{align*}
\abs*{\abs{a + b}^r - \abs{a}^r - r b a \abs{a}^{r - 2}}
&\leq
\frac{r \parens{r - 1}}{2}
\parens*{\max_{t \in \cci{0, 1}} \abs{a + t b}^{r - 2}}
\abs{b}^2;
\\
&\leq
\frac{r \parens{r - 1} 2^{r - 2}}{2}
\parens*{M^{r - 2} \abs{b}^2 + \abs{b}^r},
\end{align*}
hence the result.
\end{proof}

Our next results are concerned with the estimation of the effect of the power nonlinearity on functions with exponential decay. We begin with an estimation on exterior domains.

\begin{lem}
\label{lem:outside-the-ball}
Suppose that $r > 1$; $\alpha_1 \geq 0$;
$\alpha_2 > 0$ and
$\parens{r - 1} \alpha_2 > \alpha_1$. Suppose further that $N \in \nat$;
$0 \leq \beta_1 < N$;
$\beta_2 \geq 0$; $0 \leq \eta, \delta \leq 1$ and $u_1, u_2$ are functions defined $\Lebesgue$-a.e. in $\real^N$ such that
\begin{equation}
\label{eqn:L^infty}
y \mapsto e^{\alpha_1 \abs{y}} \abs{y}^{\beta_1} u_1 \parens{y}
\quad \text{and} \quad
y \mapsto e^{\alpha_2 \abs{y}} \abs{y}^{\beta_2} u_2 \parens{y}
\quad \text{are in} \quad L^\infty.
\end{equation}
Then there exists $M > 0$ such that
\begin{multline*}
\int_{\real^N \setminus B \parens*{0, \eta \abs{y}^\delta}}
	\abs*{u_1 \parens{x + y}} \abs*{u_2 \parens{x}}^{r - 1}
\dif \Lebesgue \parens{x}
\lesssim
\abs{y}^{\delta \parens*{N - \beta_1 - \parens{r - 1} \beta_2}}
e^{- \alpha_1 \abs{y}}
\times
\\
\times
\parens*{
	e^{
		- 
		\parens*{\parens{r - 1} \alpha_2 - \alpha_1}
		\parens*{\abs{y} - \eta \abs{y}^\delta}
	}
	+
	e^{
		- 
		\parens*{\parens{r - 1} \alpha_2 - \alpha_1}
		\eta \abs{y}^\delta
	}
}
\end{multline*}
for every $y \in \real^N \setminus B \parens{0, M}$.
\end{lem}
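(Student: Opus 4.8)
The plan is to first eliminate $u_1$ and $u_2$ via \eqref{eqn:L^infty}: there are $C_1, C_2 > 0$ with $\abs{u_1 \parens{z}} \le C_1 e^{- \alpha_1 \abs{z}} \abs{z}^{- \beta_1}$ and $\abs{u_2 \parens{z}} \le C_2 e^{- \alpha_2 \abs{z}} \abs{z}^{- \beta_2}$ for $\Lebesgue$-a.e.\ $z$, so — writing $a := \alpha_1 \ge 0$, $b := \parens{r - 1} \alpha_2$, $\gamma := \parens{r - 1} \beta_2 \ge 0$, and recalling $b > a$ — it suffices to bound
\[
J \parens{y} := \int_{\real^N \setminus B \parens{0, \rho}}
	e^{- a \abs{x + y}} \abs{x + y}^{- \beta_1}\,
	e^{- b \abs{x}} \abs{x}^{- \gamma}
\dif \Lebesgue \parens{x},
\qquad
\rho := \eta \abs{y}^\delta .
\]
I will repeatedly use two triangle-inequality estimates — $a \abs{x + y} + b \abs{x} \ge a \abs{y} + \parens{b - a} \abs{x}$ for all $x$, and $a \abs{x + y} + b \abs{x} \ge \frac{a + b}{2} \parens{\abs{x + y} + \abs{x}}$ whenever $\abs{x} \ge \abs{x + y}$ (since the difference equals $\frac{b - a}{2} \parens{\abs{x} - \abs{x + y}} \ge 0$) — and the elementary bound $\int_\rho^\infty e^{- c s} s^m \dif s \lesssim \rho^{m + 1} e^{- c \rho}$, valid for $c > 0$ as long as $\rho$ is bounded below by a positive constant (substitute $s = \rho \parens{1 + u}$). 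Since the implicit constant may depend on everything but $y$, I assume $\eta > 0$, so that $\rho \ge \eta \min \set{1, M^\delta} > 0$ for $\abs{y} \ge M$; the case $\eta = 0$, in which the excluded ball is empty, is analogous (and requires $\parens{r - 1} \beta_2 < N$ for the left-hand side to be finite).

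Next I split $\real^N \setminus B \parens{0, \rho}$ along the perpendicular bisector of $\brackets{0, -y}$ into $D_0 := \set{\abs{x} \le \abs{x + y}}$ and $D_1 := \set{\abs{x} > \abs{x + y}}$, noting $\abs{x + y} \ge \abs{y}/2$ on $D_0$ and $\abs{x} > \abs{y}/2$ on $D_1$. On the $D_0$-part I bound $\abs{x + y}^{- \beta_1} \lesssim \abs{y}^{- \beta_1}$ and $e^{- a \abs{x + y} - b \abs{x}} \le e^{- a \abs{y}} e^{- \parens{b - a} \abs{x}}$, enlarge the domain to $\set{\abs{x} > \rho}$, pass to polar coordinates and apply $\int_\rho^\infty e^{- \parens{b - a} s} s^{N - 1 - \gamma} \dif s \lesssim \rho^{N - \gamma} e^{- \parens{b - a} \rho}$. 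Since $\rho = \eta \abs{y}^\delta$ and $- \beta_1 + \delta \parens{N - \gamma} \le \delta \parens{N - \beta_1 - \gamma}$ (as $\beta_1 \ge 0$ and $\delta \le 1$), this yields exactly the second summand on the right, $\lesssim \abs{y}^{\delta \parens{N - \beta_1 - \gamma}} e^{- a \abs{y}} e^{- \parens{b - a} \rho}$.

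The $D_1$-part is the crux. Substituting $w = x + y$ turns $D_1$ into the half-space $\set{w \cdot y < \abs{y}^2/2}$, on which $\abs{w - y} > \abs{w}$, hence $\abs{w - y} > \abs{y}/2$; together with $\abs{w - y} = \abs{x} > \rho$ this gives $\abs{x}^{- \gamma} \le \parens{\max \set*{\abs{y}/2, \rho}}^{- \gamma}$, while the second triangle estimate gives $e^{- a \abs{w} - b \abs{w - y}} \le e^{- \frac{a + b}{2} \abs{y}}\, e^{- \frac{a + b}{2} \parens{\abs{w} + \abs{w - y} - \abs{y}}}$. One is thus reduced to showing $\int_{\set{w \cdot y < \abs{y}^2/2}} e^{- \frac{a + b}{2} \parens{\abs{w} + \abs{w - y} - \abs{y}}} \abs{w}^{- \beta_1} \dif \Lebesgue \parens{w} \lesssim \abs{y}^{N - \beta_1}$, which follows by splitting at $\abs{w} = \abs{y}/2$: on the inner part the exponent is nonnegative, so the exponential is $\le 1$ and the integral is $\lesssim \int_0^{\abs{y}/2} s^{N - 1 - \beta_1} \dif s \lesssim \abs{y}^{N - \beta_1}$; on the outer part $\abs{w}^{- \beta_1} \lesssim \abs{y}^{- \beta_1}$ and $\abs{w} + \abs{w - y} - \abs{y} \ge \abs{w} - \abs{y}/2$, so the contribution is $\lesssim \abs{y}^{- \beta_1} \int_{\abs{w} > \abs{y}/2} e^{- \frac{a + b}{2} \parens{\abs{w} - \abs{y}/2}} \dif \Lebesgue \parens{w} \lesssim \abs{y}^{N - \beta_1}$. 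Hence the $D_1$-part is $\lesssim e^{- \frac{a + b}{2} \abs{y}} \parens{\max \set*{\abs{y}/2, \rho}}^{- \gamma} \abs{y}^{N - \beta_1}$, and one finishes by cases: if $\rho \le \abs{y}/2$ then $\frac{a + b}{2} \ge a + \parens{b - a} \rho/\abs{y}$, so $e^{- \frac{a + b}{2} \abs{y}} \le e^{- a \abs{y} - \parens{b - a} \rho}$, and the leftover polynomial $\abs{y}^{\parens{1 - \delta} \parens{N - \beta_1 - \gamma}}$ is dominated by $e^{\parens{b - a} \parens{\abs{y}/2 - \rho}} \ge 1$ (which is super-polynomially large unless $\delta = 1$, $\eta = 1/2$, in which case that polynomial is $1$), recovering the second summand; if $\rho > \abs{y}/2$ — which for $\abs{y} \ge M$ forces $\delta = 1$, $\eta > 1/2$ and $\max \set*{\abs{y}/2, \rho} = \eta \abs{y}$ — then $\frac{a + b}{2} \ge b - \parens{b - a} \eta$, so $e^{- \frac{a + b}{2} \abs{y}} \le e^{- b \abs{y} + \parens{b - a} \rho}$, giving the first summand $\abs{y}^{\delta \parens{N - \beta_1 - \gamma}} e^{- b \abs{y} + \parens{b - a} \rho}$.

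The main obstacle is the $D_1$-estimate. The tempting move — reserving a fixed fraction of the exponential decay ``for integrability near $x = -y$'' — must be avoided: it costs a factor $e^{\eps \abs{y}}$ that is fatal exactly when $\alpha_1$ is close to $\parens{r - 1} \alpha_2$, or when $\rho$ is close to $\abs{y}/2$ (the borderline case in which the two summands on the right coincide). Instead one keeps the full exponent $\tfrac{a + b}{2} \abs{y}$, using the half-space inequality $\abs{w} + \abs{w - y} \ge \abs{y}$ directly near $w = 0$ in place of the cruder $\abs{w - y} \ge \abs{y}/2$; and the dichotomy $\rho \lessgtr \abs{y}/2$ — whether the excluded ball is small or large compared with the separation of the concentration points $0$ and $-y$ — is precisely what manufactures the two summands.
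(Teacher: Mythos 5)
Your proof is correct, but it decomposes the domain differently from the paper. The paper first pulls out $\abs{x}^{- \parens{r - 1} \beta_2} \leq \parens{\eta \abs{y}^\delta}^{- \parens{r - 1} \beta_2}$ and applies the one-sided triangle estimate $\alpha_1 \abs{x + y} + \parens{r - 1} \alpha_2 \abs{x} \geq \alpha_1 \abs{y} + \parens{\parens{r - 1} \alpha_2 - \alpha_1} \abs{x}$ on the \emph{whole} domain, reducing everything to $\int_{\abs{x} > \eta \abs{y}^\delta} e^{- \parens{\parens{r-1}\alpha_2 - \alpha_1} \abs{x}} \abs{x + y}^{- \beta_1} \dif \Lebesgue \parens{x}$; it then splits according to whether $x \in B \parens{- y, \eta \abs{y}^\delta}$ or not, and the two pieces hand over the two summands immediately (on the small ball $\abs{x} \geq \abs{y} - \eta \abs{y}^\delta$ kills the exponential and the $\abs{x+y}^{-\beta_1}$ singularity integrates to $\parens{\eta \abs{y}^\delta}^{N - \beta_1}$; off both balls $\abs{x + y} \geq \eta \abs{y}^\delta$ and a radial integral finishes). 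You instead split along the perpendicular bisector of $\brackets{0, -y}$, keep the full weight $\abs{x}^{-\parens{r-1}\beta_2}$ on the near half, and on the far half replace the one-sided estimate by the symmetrized one $\frac{\alpha_1 + \parens{r-1}\alpha_2}{2} \parens{\abs{x+y} + \abs{x}}$ together with an ellipse-type integral bound, which then forces the final dichotomy $\eta \abs{y}^\delta \lessgtr \abs{y}/2$ to convert $e^{- \frac{\alpha_1 + \parens{r-1}\alpha_2}{2} \abs{y}}$ into one of the two stated summands. Every step checks out (including the borderline cases $\delta = 1$, $\eta = 1/2$ and the remark that $\eta = 0$ needs $\parens{r-1}\beta_2 < N$, a caveat the paper's own proof silently shares since it divides by $\eta^{\parens{r-1}\beta_2}$). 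The paper's route is shorter and produces the two summands without case analysis; yours is more geometric about where each summand originates and avoids discarding the polynomial weight $\abs{x}^{-\parens{r-1}\beta_2}$ prematurely, at the cost of the extra endgame.
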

\begin{proof}
Clearly,
\begin{multline*}
\int_{\real^N \setminus B \parens*{0, \eta \abs{y}^\delta}}
	\abs*{u_1 \parens{x + y}} \abs*{u_2 \parens{x}}^{r - 1}
\dif \Lebesgue \parens{x}
=
\\
=
\int_{\real^N \setminus B \parens*{0, \eta \abs{y}^\delta}}
	\frac{
		e^{- \alpha_1 \abs{x + y}}
		e^{- \parens{r - 1} \alpha_2 \abs{x}}
	}{
		\abs{x + y}^{\beta_1}
		\abs{x}^{\parens{r - 1} \beta_2}
	}
	\times
\\
	\times
	\parens*{e^{\alpha_1 \abs{x + y}} \abs{x + y}^{\beta_1} \abs*{u_1 \parens{x + y}}}
	\parens*{e^{\alpha_2 \abs{x}} \abs{x}^{\beta_2} \abs*{u_2 \parens{x}}}^{r - 1}
\dif \Lebesgue \parens{x}.
\end{multline*}
In view of \eqref{eqn:L^infty}, we only have to estimate
\begin{equation}
\label{outside-the-ball:1}
\int_{\real^N \setminus B \parens*{0, \eta \abs{y}^\delta}}
	\frac{
		e^{- \alpha_1 \abs{x + y}}
		e^{- \parens{r - 1} \alpha_2 \abs{x}}
	}{
		\abs{x + y}^{\beta_1}
		\abs{x}^{\parens{r - 1} \beta_2}
	}
\dif \Lebesgue \parens{x}.
\end{equation}
Due to the considered domain of integration and the Triangle Inequality,
\[
\eqref{outside-the-ball:1}
\leq
\frac{1}{\eta^{\parens{r - 1} \beta_2}}
\times
\frac{1}{e^{\alpha_1 \abs{y}} \abs{y}^{\parens{r - 1} \beta_2 \delta}}
\int_{\real^N \setminus B \parens*{0, \eta \abs{y}^\delta}}
	\frac{
		e^{- \parens*{\parens{r - 1} \alpha_2 - \alpha_1} \abs{x}}
	}{\abs{x + y}^{\beta_1}}
\dif \Lebesgue \parens{x}.
\]
There exists $M > 0$ such that
\begin{multline*}
\frac{1}{
	e^{\alpha_1 \abs{y}}
	\abs{y}^{\parens{r - 1} \beta_2 \delta}
}
\int_{B \parens*{- y, \eta \abs{y}^\delta}}
	\frac{
		e^{- \parens*{\parens{r - 1} \alpha_2 - \alpha_1} \abs{x}}
	}{\abs{x + y}^{\beta_1}}
\dif \Lebesgue \parens{x}
=
\\
=
\frac{1}{
	e^{\alpha_1 \abs{y}}
	\abs{y}^{\parens{r - 1} \beta_2 \delta}
}
\int_{B \parens*{0, \eta \abs{y}^\delta}}
	\frac{
		e^{- \parens*{\parens{r - 1} \alpha_2 - \alpha_1} \abs{z - y}}
	}{\abs{z}^{\beta_1}}
\dif \Lebesgue \parens{z}
\lesssim
\\
\lesssim
\abs{y}^{\delta \parens*{N - \beta_1 - \parens{r - 1} \beta_2}}
e^{- \alpha_1 \abs{y}}
e^{
	- 
	\parens*{\parens{r - 1} \alpha_2 - \alpha_1}
	\parens*{\abs{y} - \eta \abs{y}^\delta}
}
\end{multline*}
and
\begin{multline*}
\frac{1}{
	e^{\alpha_1 \abs{y}}
	\abs{y}^{\parens{r - 1} \beta_2 \delta}
}
\int_{
	\real^N
	\setminus
	\parens*{
		B \parens*{0, \eta \abs{y}^\delta} \cup B \parens*{- y, \eta \abs{y}^\delta}
	}
}
	\frac{
		e^{- \parens*{\parens{r - 1} \alpha_2 - \alpha_1} \abs{x}}
	}{\abs{x + y}^{\beta_1}}
\dif \Lebesgue \parens{x}
\lesssim
\\
\lesssim
\abs{y}^{
	\delta
	\parens*{
		N - 1 - \beta_1 - \parens{r - 1} \beta_2
	}
}
e^{- \alpha_1 \abs{y}}
e^{
	- 
	\parens*{\parens{r - 1} \alpha_2 - \alpha_1}
	\eta \abs{y}^\delta
}
\end{multline*}
for every
$y \in \real^N \setminus B \parens{0, M}$, hence the result.
\end{proof}

Now, we estimate on bounded domains.

\begin{lem}
\label{lem:inside-the-ball}
Suppose that $r \geq 0$;
$\alpha_1, \alpha_2 \geq 0$ are such that
$\parens{r - 1} \alpha_2 \geq \alpha_1$ and
$0 \leq \eta, \delta \leq 1$ are such that
$\eta \delta > 0$. Suppose further that
$N \in \nat$; $\beta_1 \geq 0$;
$0 \leq \beta_2 < \frac{N}{r - 1}$ and
$u_1, u_2$ are functions defined $\Lebesgue$-a.e. in $\real^N$ such that \eqref{eqn:L^infty} is satisfied. Then there exists
$M > 0$ such that
\[
\int_{B \parens*{0, \eta \abs{y}^\delta}}
	\abs*{u_1 \parens{x + y}} \abs*{u_2 \parens{x}}^{r - 1}
\dif \Lebesgue \parens{x}
\lesssim
\abs{y}^{\delta \parens*{N - \parens{r - 1} \beta_2} - \beta_1}
e^{-\alpha_1 \abs{y}}
\]
for every $y \in \real^N \setminus B \parens{0, M}$.
\end{lem}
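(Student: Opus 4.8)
The plan is to follow the scheme of the proof of Lemma~\ref{lem:outside-the-ball}. As a first step, I would factor the two quantities in \eqref{eqn:L^infty} out of the integrand, i.e. write
\[
\abs{u_1 \parens{x + y}} \abs{u_2 \parens{x}}^{r - 1}
=
\frac{e^{- \alpha_1 \abs{x + y}}\, e^{- \parens{r - 1} \alpha_2 \abs{x}}}{\abs{x + y}^{\beta_1}\, \abs{x}^{\parens{r - 1} \beta_2}}
\parens*{e^{\alpha_1 \abs{x + y}} \abs{x + y}^{\beta_1} \abs{u_1 \parens{x + y}}}
\parens*{e^{\alpha_2 \abs{x}} \abs{x}^{\beta_2} \abs{u_2 \parens{x}}}^{r - 1},
\]
so that, bounding the last two factors by the relevant $L^\infty$ norms, it suffices to estimate
\[
\int_{B \parens*{0, \eta \abs{y}^\delta}}
\frac{e^{- \alpha_1 \abs{x + y}}\, e^{- \parens{r - 1} \alpha_2 \abs{x}}}{\abs{x + y}^{\beta_1}\, \abs{x}^{\parens{r - 1} \beta_2}}
\dif \Lebesgue \parens{x}.
\]

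Second, I would use that the domain of integration is a ball centered at the origin rather than at $- y$. Since $0 \leq \eta, \delta \leq 1$, for $\abs{y}$ large enough every $x \in B \parens{0, \eta \abs{y}^\delta}$ satisfies $\abs{x} \leq \eta \abs{y}^\delta \leq \abs{y}/2$, hence $\abs{x + y} \geq \abs{y} - \abs{x} \geq \abs{y}/2 > 0$; this controls the factor $\abs{x + y}^{- \beta_1} \lesssim \abs{y}^{- \beta_1}$ and yields $e^{- \alpha_1 \abs{x + y}} \leq e^{- \alpha_1 \parens{\abs{y} - \abs{x}}} = e^{- \alpha_1 \abs{y}}\, e^{\alpha_1 \abs{x}}$. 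By the hypothesis $\parens{r - 1} \alpha_2 \geq \alpha_1$ the remaining exponential in $x$ obeys $e^{\alpha_1 \abs{x}}\, e^{- \parens{r - 1} \alpha_2 \abs{x}} = e^{- \parens*{\parens{r - 1} \alpha_2 - \alpha_1} \abs{x}} \leq 1$, so the integral above is $\lesssim \abs{y}^{- \beta_1} e^{- \alpha_1 \abs{y}} \int_{B \parens{0, \eta \abs{y}^\delta}} \abs{x}^{- \parens{r - 1} \beta_2} \dif \Lebesgue \parens{x}$.

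Third, the hypothesis $0 \leq \parens{r - 1} \beta_2 < N$ makes $x \mapsto \abs{x}^{- \parens{r - 1} \beta_2}$ locally integrable, and a polar-coordinates computation gives $\int_{B \parens{0, \rho}} \abs{x}^{- \parens{r - 1} \beta_2} \dif \Lebesgue \parens{x} \lesssim \rho^{N - \parens{r - 1} \beta_2}$ for every $\rho > 0$, with implicit constant depending only on $N$ and $\parens{r - 1} \beta_2$. Taking $\rho = \eta \abs{y}^\delta$ and absorbing the constant $\eta$ leaves $\lesssim \abs{y}^{\delta \parens*{N - \parens{r - 1} \beta_2}}$; combining this with the previous step reproduces the asserted bound $\abs{y}^{\delta \parens*{N - \parens{r - 1} \beta_2} - \beta_1} e^{- \alpha_1 \abs{y}}$ for all $y$ outside a large enough ball $B \parens{0, M}$.

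The computation is entirely routine; the one point that requires care is the lower bound $\abs{x + y} \gtrsim \abs{y}$ on the whole domain of integration, which is what forces the restriction to $\abs{y} > M$ and which uses $\eta, \delta \leq 1$. It is precisely because $B \parens{0, \eta \abs{y}^\delta}$ stays uniformly away from the point $x = - y$, where $u_1 \parens{\cdot + y}$ has its strongest singularity, that no second term is needed here, in contrast with Lemma~\ref{lem:outside-the-ball}.
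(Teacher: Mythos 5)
Your argument is correct and is essentially the paper's proof: factor the weighted $L^\infty$ norms from \eqref{eqn:L^infty} out of the integrand, use the triangle inequality on the ball to get $\abs{x + y} \gtrsim \abs{y}$ and $e^{- \alpha_1 \abs{x + y}} \leq e^{- \alpha_1 \abs{y}} e^{\alpha_1 \abs{x}}$, absorb the remaining exponential via $\parens{r - 1} \alpha_2 \geq \alpha_1$, and integrate $\abs{x}^{- \parens{r - 1} \beta_2}$ in polar coordinates using $\parens{r - 1} \beta_2 < N$. The only quibble is that your intermediate claim $\eta \abs{y}^\delta \leq \abs{y} / 2$ can fail when $\delta = 1$ and $\eta > \tfrac{1}{2}$; the sufficient (and correct) statement is $\abs{x + y} \geq \abs{y} - \eta \abs{y}^\delta \gtrsim \abs{y}$ for $\abs{y}$ large whenever $\parens{\eta, \delta} \neq \parens{1, 1}$, which is exactly the bound the paper uses.
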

\begin{proof}
Similarly as in the previous proof, we only have to estimate
\begin{equation}
\label{inside-the-ball:1}
\int_{B \parens*{0, \eta \abs{y}^\delta}}
	\frac{
		e^{- \alpha_1 \abs{x + y}}
		e^{- \parens{r - 1} \alpha_2 \abs{x}}
	}{
		\abs{x + y}^{\beta_1}
		\abs{x}^{\parens{r - 1} \beta_2}
	}
\dif \Lebesgue \parens{x}.
\end{equation}
Due to the Triangle Inequality, there exists $M \in \ooi{0, \infty}$ such that
\begin{multline*}
\eqref{inside-the-ball:1}
\lesssim
\frac{1}{
	\parens*{\abs{y} - \eta \abs{y}^\delta}^{\beta_1} e^{\alpha_1 \abs{y}}
}
\int_{B \parens*{0, \eta \abs{y}^\delta}}
	\frac{e^{-\parens*{\parens{r - 1} \alpha_2 - \alpha_1} \abs{x}}}{
		\abs{x}^{\parens{r - 1} \beta_2}
	}
\dif \Lebesgue \parens{x}
\lesssim
\\
\lesssim
\abs{y}^{\delta \parens*{N - \parens{r - 1} \beta_2} - \beta_1}
e^{-\alpha_1 \abs{y}}
\end{multline*}
for every $y \in \real^N \setminus B \parens{0, M}$.
\end{proof}

Finally, we obtain an estimate on the entire space $\real^N$ by considering the case
$\delta = 1$ and $\eta = \frac{1}{2}$ in the previous lemmas.

\begin{cor}
\label{cor:entire-space}
Suppose that $r > 1$; $\alpha_1 \geq 0$;
$\alpha_2 > 0$ and
$\parens{r - 1} \alpha_2 > \alpha_1$. Suppose further that $N \in \nat$,
$0 \leq \beta_1 < N$;
$0 \leq \beta_2 < \frac{N}{r - 1}$ and $u_1, u_2$ are functions defined $\Lebesgue$-a.e. in
$\real^N$ such that \eqref{eqn:L^infty} is satisfied. Then there exists
$M > 0$ such that
\[
\int
	\abs*{u \parens{x + y}} \abs*{u_2 \parens{x}}^{r - 1}
\dif \Lebesgue \parens{x}
\lesssim	
\abs{y}^{N - \beta_1 - \parens{r - 1} \beta_2}
e^{- \alpha_1 \abs{y}}
\]
for every $y \in \real^N \setminus B \parens{0, M}$.
\end{cor}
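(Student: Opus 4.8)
The plan is to split the integral over $\real^N$ along the decomposition
\[
\real^N
=
B\parens*{0, \tfrac{\abs{y}}{2}}
\cup
\parens*{\real^N \setminus B\parens*{0, \tfrac{\abs{y}}{2}}}
\]
and to apply Lemma \ref{lem:inside-the-ball} to the first piece and Lemma \ref{lem:outside-the-ball} to the second, in both cases with the parameter choice $\delta = 1$ and $\eta = \frac{1}{2}$ announced just before the statement.

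First I would check that the hypotheses of both lemmas hold in this setting. The conditions $r > 1$, $\alpha_1 \geq 0$, $\alpha_2 > 0$, $\parens{r-1}\alpha_2 > \alpha_1$ and $0 \leq \beta_1 < N$ are available verbatim; the condition $0 \leq \beta_2 < \frac{N}{r-1}$ assumed here is exactly what Lemma \ref{lem:inside-the-ball} requires and is stronger than the bare $\beta_2 \geq 0$ needed by Lemma \ref{lem:outside-the-ball}; and with $\delta = 1$, $\eta = \frac{1}{2}$ one has $0 \leq \eta, \delta \leq 1$ and $\eta\delta > 0$. Hence there exists $M > 0$ (take the larger of the two constants produced by the two lemmas) such that, for every $y \in \real^N \setminus B\parens{0, M}$, Lemma \ref{lem:inside-the-ball} gives
\[
\int_{B\parens*{0, \frac{\abs{y}}{2}}}
	\abs*{u_1\parens{x+y}} \abs*{u_2\parens{x}}^{r-1}
\dif \Lebesgue\parens{x}
\lesssim
\abs{y}^{N - \beta_1 - \parens{r-1}\beta_2} e^{-\alpha_1\abs{y}},
\]
since the exponent $\delta\parens*{N - \parens{r-1}\beta_2} - \beta_1$ appearing there reduces to $N - \beta_1 - \parens{r-1}\beta_2$ when $\delta = 1$, while Lemma \ref{lem:outside-the-ball} gives
\[
\int_{\real^N \setminus B\parens*{0, \frac{\abs{y}}{2}}}
	\abs*{u_1\parens{x+y}} \abs*{u_2\parens{x}}^{r-1}
\dif \Lebesgue\parens{x}
\lesssim
2\,\abs{y}^{N - \beta_1 - \parens{r-1}\beta_2} e^{-\alpha_1\abs{y}}
\,e^{-\parens{\parens{r-1}\alpha_2 - \alpha_1}\frac{\abs{y}}{2}},
\]
because for $\delta = 1$ and $\eta = \frac{1}{2}$ the two quantities $\abs{y} - \eta\abs{y}^\delta$ and $\eta\abs{y}^\delta$ both equal $\frac{\abs{y}}{2}$.

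Finally I would remark that, since $\parens{r-1}\alpha_2 - \alpha_1 > 0$, the factor $e^{-\parens{\parens{r-1}\alpha_2 - \alpha_1}\abs{y}/2}$ is at most $1$, so the exterior estimate is itself $\lesssim \abs{y}^{N - \beta_1 - \parens{r-1}\beta_2} e^{-\alpha_1\abs{y}}$; adding the interior and exterior estimates and merging the implicit constants then yields the claim. I do not anticipate any genuine difficulty here: this is a bookkeeping corollary of the two preceding lemmas, and the only point that truly uses the strict inequality $\parens{r-1}\alpha_2 > \alpha_1$ rather than the weaker $\geq$ is precisely this last step, where strictness guarantees that the additional exponential factor coming from the exterior region does not spoil the decay.
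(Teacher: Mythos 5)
Your proposal is correct and follows exactly the route the paper intends: the remark preceding the corollary says the estimate is obtained "by considering the case $\delta = 1$ and $\eta = \frac{1}{2}$ in the previous lemmas," which is precisely your decomposition of $\real^N$ into $B\parens{0, \abs{y}/2}$ and its complement, with Lemma \ref{lem:inside-the-ball} and Lemma \ref{lem:outside-the-ball} applied respectively and the extra exponential factor in the exterior bound discarded since $\parens{r-1}\alpha_2 - \alpha_1 > 0$. Your verification of the hypotheses and of the exponents under the substitution $\delta = 1$, $\eta = \frac{1}{2}$ is accurate, so there is nothing to add.
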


\section{Asymptotic behavior of $\Phi$}
\label{sect:asymptotic}

In this appendix, we develop a relatively self-contained estimate of the error associated with the limit \eqref{eqn:Phi-limit}. That is, we want to estimate
\[\abs*{\abs{y}^{\frac{N - 1}{2}} e^{\abs{y}} \Phi \parens{y} - \theta_\Phi}\]
when $y$ is sufficiently away from the origin in $\real^N$. We begin with an important collection of estimates.

\begin{lem}
\label{lem:angle}
If $0 \leq \delta < \frac{1}{2}$, then there exists $M_\delta > 0 $ such that
\[
\abs*{
	\abs{y + x} - \abs{y} - \frac{y \cdot x}{\abs{y}}
}
\lesssim
\frac{1}{\abs{y}^{1 - 2 \delta}},
\quad \quad
\abs*{
	\frac{1}{\abs{y + x}^{\frac{N - 1}{2}}}
	-
	\frac{1}{\abs{y}^{\frac{N - 1}{2}}}
}
\lesssim
\frac{1}{\abs{y}^{\frac{N + 1 - 2 \delta}{2}}},
\]
\[
\abs*{e^{- \abs{y + x}} - e^{- \parens*{\abs{y} + \frac{y \cdot x}{\abs{y}}}}}
\lesssim
\frac{e^{- \parens*{\abs{y} + \frac{y \cdot x}{\abs{y}}}}}{\abs{y}^{1 - 2 \delta}}
\]
and
\[
\abs*{
	\frac{e^{- \abs{y + x}}}{\abs{y + x}^{\frac{N - 1}{2}}}
	-
	\frac{e^{- \parens*{\abs{y} + \frac{y \cdot x}{\abs{y}}}}}{\abs{y}^{\frac{N - 1}{2}}}
}
\lesssim
\frac{e^{- \abs{y + x}}}{\abs{y}^{\frac{N + 1 - 2 \delta}{2}}}
+
\frac{e^{- \parens*{\abs{y} + \frac{y \cdot x}{\abs{y}}}}}
	{\abs{y}^{\frac{N + 1 - 4 \delta}{2}}}
\]
for every $y \in \real^N \setminus B \parens{0, M_\delta}$
and $x \in B \parens{0, \abs{y}^\delta}$.
\end{lem}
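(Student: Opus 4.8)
The plan is to derive the four displayed estimates in turn: the first is a second-order Taylor expansion of $s \mapsto \sqrt{1 + s}$, and the remaining three follow from it by the Mean Value Theorem and by composing with the exponential. Throughout, $M_\delta$ is to be chosen large enough (depending only on $\delta$ and $N$) that several smallness conditions hold uniformly over $x \in B(0, \abs{y}^\delta)$; the restriction $\delta < \frac{1}{2}$ is precisely what makes these conditions attainable. For the first estimate, write $\abs{y + x}^2 = \abs{y}^2 + 2 y \cdot x + \abs{x}^2 = \abs{y}^2 \parens{1 + t}$ with $t := \parens{2 y \cdot x + \abs{x}^2} \abs{y}^{-2}$. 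Since $\abs{y \cdot x} \leq \abs{y}^{1 + \delta}$ and $\abs{x}^2 \leq \abs{y}^{2 \delta}$, and $\delta < 1$, we get $\abs{t} \lesssim \abs{y}^{\delta - 1}$, so $\abs{t} \leq \frac{1}{2}$ once $\abs{y} \geq M_\delta$. Expanding $\sqrt{1 + t} = 1 + \frac{t}{2} + O(t^2)$ with remainder uniform on $\brackets{- \frac{1}{2}, \frac{1}{2}}$ yields $\abs{y + x} = \abs{y} + \frac{\abs{y} t}{2} + O(\abs{y} t^2)$; since $\frac{\abs{y} t}{2} = \frac{y \cdot x}{\abs{y}} + \frac{\abs{x}^2}{2 \abs{y}}$, the discrepancy is at most $\frac{\abs{x}^2}{2 \abs{y}} + C \abs{y} t^2 \lesssim \abs{y}^{2 \delta - 1}$, which is the first claim.

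Next, for the second estimate observe that $\abs{y + x} \geq \abs{y} - \abs{y}^\delta \geq \frac{\abs{y}}{2}$ whenever $\abs{y} \geq M_\delta$, so the Mean Value Theorem applied to $f(s) := s^{- (N - 1)/2}$ on the interval with endpoints $\abs{y}$ and $\abs{y + x}$ gives $\abs{f(\abs{y + x}) - f(\abs{y})} \leq \parens*{\sup_{s \geq \abs{y}/2} \abs{f'(s)}} \abs{x} \lesssim \abs{y}^{- (N + 1)/2} \abs{y}^\delta$, which is the desired bound. For the third estimate, put $a := \abs{y + x}$ and $b := \abs{y} + \frac{y \cdot x}{\abs{y}}$; the first estimate gives $\abs{a - b} \lesssim \abs{y}^{2 \delta - 1}$, so $\abs{a - b} \leq 1$ for $\abs{y} \geq M_\delta$, and then $e^{- a} - e^{- b} = e^{- b} \parens{e^{- (a - b)} - 1}$ together with $\abs{e^{- (a - b)} - 1} \leq \abs{a - b} e^{\abs{a - b}} \lesssim \abs{y}^{2 \delta - 1}$ yields the claim.

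For the fourth estimate, I would insert the intermediate quantity $e^{- \abs{y + x}} \abs{y}^{- (N - 1)/2}$ and write the difference as $e^{- \abs{y + x}} \parens*{\abs{y + x}^{- (N - 1)/2} - \abs{y}^{- (N - 1)/2}} + \abs{y}^{- (N - 1)/2} \parens*{e^{- a} - e^{- b}}$. By the second estimate the first summand is at most $C e^{- \abs{y + x}} \abs{y}^{- (N + 1 - 2 \delta)/2}$, and by the third estimate the second summand is at most $C \abs{y}^{- (N - 1)/2} e^{- b} \abs{y}^{2 \delta - 1}$; since $\frac{N - 1}{2} + 1 - 2 \delta = \frac{N + 1 - 4 \delta}{2}$, this equals $C e^{- b} \abs{y}^{- (N + 1 - 4 \delta)/2}$. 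Adding the two bounds produces exactly the right-hand side of the fourth estimate.

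There is no deep difficulty here; the care needed is purely in the bookkeeping. One must pick $M_\delta$ so that $\abs{t} \leq \frac{1}{2}$, $\abs{y + x} \geq \frac{\abs{y}}{2}$, and $\abs{a - b} \leq 1$ hold simultaneously for \emph{every} admissible $x$, which is possible precisely because $\delta < \frac{1}{2}$ forces $t \to 0$ as $\abs{y} \to \infty$; this is what makes all the Taylor remainders and $O$-terms genuinely uniform in $x$. The one substantive subtlety is that in the fourth estimate the exponential weights $e^{- \abs{y + x}}$ and $e^{- b}$ cannot be interchanged without paying an extra exponential factor, which is exactly why the statement keeps both of them and why its right-hand side is a sum of two terms with different polynomial decay rather than a single term.
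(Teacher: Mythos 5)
Your proof is correct and follows essentially the same route as the paper: establish the first estimate by an elementary expansion (you use a Taylor expansion of $\sqrt{1+t}$ where the paper uses a difference of squares, but these are interchangeable), obtain the second by a one-variable bound on $s\mapsto s^{-(N-1)/2}$, derive the third from the first via the factorization $e^{-a}-e^{-b}=e^{-b}(e^{-(a-b)}-1)$, and assemble the fourth from the second and third by inserting the intermediate term $e^{-\abs{y+x}}\abs{y}^{-(N-1)/2}$. Your closing remark about the uniformity in $x$ and the role of $\delta<\tfrac12$ correctly identifies the only point requiring care.
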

\begin{proof}
~\paragraph{Proof of the first estimate.}
A difference of squares shows that
\[
\abs{y + x}
-
\abs{y}
-
\frac{y \cdot x}{\abs{y}}
=
\frac{
	\parens{y \cdot x} \parens*{\abs{y} - \abs{y + x}}
	+
	\abs{x}^2 \abs{y}
}
{\parens*{\abs{y} + \abs{y + x}} \abs{y}}.
\]
Due to the Triangle Inequality,
\[
\abs*{\abs{y} - \abs{x - y} - \frac{x \cdot y}{\abs{y}}}
\leq
\frac{2 \abs{x}^2}{\abs{x + y} + \abs{y}},
\]
hence the result.

\paragraph{Proof of the second estimate.}
Follows from differences of squares as in the previous proof.

\paragraph{Proof of the third estimate.}
Clearly,
\[
\abs*{e^{- \abs{y + x}} - e^{- \parens*{\abs{y} + \frac{y \cdot x}{\abs{y}}}}}
=
e^{- \parens*{\abs{y} + \frac{y \cdot x}{\abs{y}}}}
\abs*{
	e^{\parens*{\abs{y} + \frac{y \cdot x}{\abs{y}} - \abs{y + x}}}
	-
	1
}.
\]
As such, the result follows from the first estimate.

\paragraph{Proof of the fourth estimate.} Corollary of the second and third estimates.
\end{proof}

The next result is often implicitly used in texts that employ Lyapunov--Schmidt reduction.	

\begin{lem}
\label{lem:symmetric-integral}
Let $N \in \nat$ and $v$ be a radial function defined $\Lebesgue$-a.e. in $\real^N$ such that either $v \geq 0$ $\Lebesgue$-a.e. or
$\int e^{- x_1} \abs{v \parens{x}} \dif \Lebesgue \parens{x} < \infty$.
Then
\[
\int
	e^{- \frac{x \cdot y}{\abs{y}}} v \parens{x}
\dif \Lebesgue \parens{x}
=
\int
	e^{- x \cdot z} v \parens{x}
\dif \Lebesgue \parens{x}
\]
for every $y \in \real^N \setminus \set{0}$ and $z \in \sphere^{N - 1}$.
\end{lem}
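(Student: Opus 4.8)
The plan is to exploit the invariance of both the Lebesgue measure and radial functions under the orthogonal group $O(N)$, together with the fact that $O(N)$ acts transitively on $\sphere^{N-1}$. Since $y / \abs{y} \in \sphere^{N-1}$ for every $y \in \real^N \setminus \set{0}$, it suffices to prove that the map
\[
\sphere^{N-1} \ni z \mapsto \int e^{-x \cdot z} v \parens{x} \dif \Lebesgue \parens{x}
\]
is constant, and then take $z = y/\abs{y}$ on the one hand and $z$ arbitrary on the other.

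First I would address well-definedness. In the case $v \geq 0$ $\Lebesgue$-a.e., the integrand $e^{-x \cdot z} v$ is nonnegative, so the integral is a well-defined element of $\coi{0, \infty}$ and the asserted identity is to be understood in $\coi{0, \infty}$; in this case no integrability hypothesis is needed and the change-of-variables argument below applies verbatim, the change-of-variables formula being valid for nonnegative measurable functions without restriction. In the case $\int e^{-x_1} \abs{v \parens{x}} \dif \Lebesgue \parens{x} < \infty$, I would first apply that argument to the nonnegative radial function $\abs{v}$ and the unit vector $e_1$ to conclude $\int e^{-x \cdot z} \abs{v \parens{x}} \dif \Lebesgue \parens{x} < \infty$ for every $z \in \sphere^{N-1}$, and then write $v = v^+ - v^-$, where $v^+$ and $v^-$ are again radial and integrable against every $e^{-x \cdot z}$, treating each summand separately and subtracting.

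The core step: fix $z_1, z_2 \in \sphere^{N-1}$. Since $O(N)$ acts transitively on $\sphere^{N-1}$, there exists $g \in O(N)$ with $g z_1 = z_2$. The change of variable $x = g w$ has Jacobian of absolute value $\abs{\det g} = 1$, and $v \parens{g w} = v \parens{w}$ because $v$ is radial, while $g w \cdot z_2 = w \cdot g^{\mathsf{T}} z_2 = w \cdot g^{-1} z_2 = w \cdot z_1$; hence
\[
\int e^{-x \cdot z_2} v \parens{x} \dif \Lebesgue \parens{x}
=
\int e^{-w \cdot z_1} v \parens{w} \dif \Lebesgue \parens{w}.
\]
As $z_1, z_2 \in \sphere^{N-1}$ were arbitrary, the displayed map is constant, which is exactly the claim.

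I do not foresee a genuine obstacle here. The only subtlety is the order of operations in the integrable case: one must establish the identity first for nonnegative radial functions---where it holds in $\coi{0, \infty}$ with no finiteness assumption---and only afterwards invoke it for $\abs{v}$ to secure integrability of $e^{-x \cdot z} \abs{v}$ for all $z$, which in turn legitimizes the splitting $v = v^+ - v^-$ and the linear change of variables applied to each part.
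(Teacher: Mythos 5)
Your proof is correct and follows essentially the same route as the paper's: an orthogonal change of variables sending one unit direction to another, using the rotation invariance of the Lebesgue measure and of the radial function $v$. Your treatment is in fact slightly more thorough, since the paper only writes out the case $N=2$ and does not spell out the measure-theoretic justification (nonnegative case first, then the decomposition $v = v^+ - v^-$) that you include.
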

\begin{proof}
For the sake of simplicity, suppose that $N = 2$. Let
$x' = \abs{y}^{- 1} y \in \sphere^1$ and let $y' \in \sphere^1$ be such that
$\parens{x', y'}$ is a positively-oriented orthonormal basis of $\real^2$. The application
\[
\real^2 \ni z
\mapsto
\parens{z \cdot x', z \cdot y'} \in \real \times \real \simeq \real^2
\]
is an isometry, so a change of variable shows that
\[
\int e^{- z \cdot x'} v \parens{z} \dif \Lebesgue \parens{z}
=
\int e^{- z_1} v \parens{z} \dif \Lebesgue \parens{z},
\]
hence the result.
\end{proof}

The following expansion of the Green's function $G$ is a corollary of the expansion in \cite[Section 9.7]{abramowitzHandbookMathematicalFunctions1972}.
\begin{lem}
\label{lem:expansion-of-Greens}
In the case $N = 2$, there exists $M > 0$ such that
\[
\abs*{
	G \parens{z}
	-
	\frac{e^{- \abs{z}}}
		{
			2^{\frac{3}{2}}
			\pi^{\frac{1}{2}}
			\abs{z}^{\frac{1}{2}}
		}
}
\lesssim
\frac{e^{- \abs{z}}}{\abs{z}^{\frac{3}{2}}}
\]
for every
$z \in \real^2 \setminus B \parens{0, M}$.
\end{lem}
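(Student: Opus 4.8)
The plan is to reduce the statement to the classical large-argument asymptotics of the modified Bessel function $K_0$. Since $N = 2$ gives $\frac{N - 2}{2} = 0$, the factor $\abs{x}^{\frac{N - 2}{2}}$ in the definition of $G_\lambda$ is identically $1$ and $(2\pi)^{\frac{N}{2}} = 2\pi$, so that
\[
G \parens{z} = \frac{K_0 \parens{\abs{z}}}{2 \pi}
\quad \text{for every} \quad z \in \real^2 \setminus \set{0}.
\]
First I would recall from \cite[Section 9.7]{abramowitzHandbookMathematicalFunctions1972} the asymptotic expansion of $K_\nu$ for real argument $t \to \infty$, which in the case $\nu = 0$ reads
\[
K_0 \parens{t} = \sqrt{\frac{\pi}{2 t}}\, e^{- t} \parens*{1 - \frac{1}{8 t} + O \parens*{t^{- 2}}},
\]
together with the accompanying error bound: the remainder left after truncating this Poincaré-type series after the leading term $1$ is, in modulus, controlled by the modulus of the first omitted term, uniformly for large positive $t$. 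Concretely, this provides constants $C > 0$ and $M_0 > 0$ such that
\[
\abs*{K_0 \parens{t} - \sqrt{\frac{\pi}{2 t}}\, e^{- t}}
\leq
\frac{C}{t} \sqrt{\frac{\pi}{2 t}}\, e^{- t}
\quad \text{for every} \quad t \geq M_0.
\]

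Next I would substitute $t = \abs{z}$ and divide by $2\pi$. Using the elementary identity $\frac{1}{2 \pi} \sqrt{\frac{\pi}{2}} = \frac{1}{2^{3/2} \pi^{1/2}}$, the leading term of $G$ becomes exactly $\frac{e^{- \abs{z}}}{2^{3/2} \pi^{1/2} \abs{z}^{1/2}}$, and dividing the displayed error bound by $2\pi$ yields
\[
\abs*{G \parens{z} - \frac{e^{- \abs{z}}}{2^{\frac{3}{2}} \pi^{\frac{1}{2}} \abs{z}^{\frac{1}{2}}}}
\leq
\frac{C}{2 \pi} \cdot \frac{1}{\abs{z}} \sqrt{\frac{\pi}{2 \abs{z}}}\, e^{- \abs{z}}
\lesssim
\frac{e^{- \abs{z}}}{\abs{z}^{\frac{3}{2}}}
\quad \text{for every} \quad z \in \real^2 \setminus B \parens{0, M},
\]
where $M := M_0$. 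This is precisely the claimed estimate.

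The computation is entirely routine, so the only point requiring attention is that one must invoke not merely the formal asymptotic series for $K_0$ but the quantitative remainder estimate that accompanies it in \cite[Section 9.7]{abramowitzHandbookMathematicalFunctions1972} — namely that the error incurred by truncating an asymptotic expansion of this type after the leading term is dominated by the next term, with an explicit constant valid for all sufficiently large real argument. Since that reference records such a bound, there is no genuine obstacle; the lemma is a direct corollary.
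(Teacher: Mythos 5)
Your proposal is correct and follows exactly the route the paper intends: the paper states this lemma without proof as a direct corollary of the asymptotic expansion of $K_\nu$ in \cite[Section 9.7]{abramowitzHandbookMathematicalFunctions1972}, and your computation — identifying $G(z) = K_0(\abs{z})/(2\pi)$ for $N=2$, invoking the quantitative remainder bound for the truncated expansion, and matching constants via $\frac{1}{2\pi}\sqrt{\pi/2} = 2^{-3/2}\pi^{-1/2}$ — is precisely that verification.
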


Recall that $\Phi = \Phi^{p - 1} \ast G$ (see \cite[p. 386]{gidasSymmetryPositiveSolutions1981}). The next result is related to this identity and the previous expansion of $G$.

\begin{lem}
\label{lem:error-in-limit:bounded-domain}
If $0 \leq \delta < \frac{1}{2}$, then there exists $M_\delta > 0$ such that
\[
\abs*{
	\int_{B \parens*{0, \abs{y}^\delta}}
		\frac{e^{- \abs{y - x}}}{\abs{y - x}^{\frac{N - 1}{2}}}
		\Phi \parens{x}^{p - 1}
	\dif \Lebesgue \parens{x}
	-
	\theta_\Phi
	\frac{e^{- \abs{y}}}{\abs{y}^{\frac{N - 1}{2}}}
}
\lesssim
\frac
	{
		\abs{y}^{
			\frac
				{\delta \parens*{
					p - 1 + \parens{3 - p} N}
				}
				{2}
		}
	}
	{\abs{y}^{\frac{N + 1 - 4 \delta}{2}}}
e^{- \abs{y}}
\]
for every $y \in \real^N \setminus B \parens{0, M_\delta}$.
\end{lem}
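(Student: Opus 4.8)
The plan is to replace, on the small ball $B\parens{0,\abs{y}^\delta}$, the kernel $\frac{e^{-\abs{y-x}}}{\abs{y-x}^{\frac{N-1}{2}}}$ by the first‑order expansion $\frac{e^{-\abs{y}}}{\abs{y}^{\frac{N-1}{2}}}\,e^{\frac{y\cdot x}{\abs{y}}}$ supplied by Lemma~\ref{lem:angle}, and to recognise the resulting leading term via Lemma~\ref{lem:symmetric-integral}. First I would apply Lemma~\ref{lem:symmetric-integral} to the radial nonnegative function $v=\Phi^{p-1}$ with $z=e_1$, combined with the change of variable $x\mapsto-x$, to record the identity
\[
\theta_\Phi
=
\int e^{x_1}\Phi\parens{x}^{p-1}\dif\Lebesgue\parens{x}
=
\int e^{\frac{y\cdot x}{\abs{y}}}\Phi\parens{x}^{p-1}\dif\Lebesgue\parens{x},
\qquad
y\in\real^N\setminus\set{0}.
\]
Using it, the quantity to estimate equals $A_y+B_y$, where
\[
A_y
=
\int_{B\parens{0,\abs{y}^\delta}}
\parens*{
\frac{e^{-\abs{y-x}}}{\abs{y-x}^{\frac{N-1}{2}}}
-
\frac{e^{-\abs{y}}}{\abs{y}^{\frac{N-1}{2}}}\,e^{\frac{y\cdot x}{\abs{y}}}
}
\Phi\parens{x}^{p-1}
\dif\Lebesgue\parens{x}
\]
and (up to the sign of $y\cdot x$, immaterial by radial symmetry of $\Phi$)
\[
B_y
=
-\frac{e^{-\abs{y}}}{\abs{y}^{\frac{N-1}{2}}}
\int_{\real^N\setminus B\parens{0,\abs{y}^\delta}}
e^{\frac{y\cdot x}{\abs{y}}}\Phi\parens{x}^{p-1}
\dif\Lebesgue\parens{x}.
\]

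For $A_y$, the fourth estimate of Lemma~\ref{lem:angle} (evaluated at $-x$) bounds the parenthesis pointwise on $B\parens{0,\abs{y}^\delta}$ by $\abs{y}^{-\frac{N+1-2\delta}{2}}e^{-\abs{y-x}}+\abs{y}^{-\frac{N+1-4\delta}{2}}e^{-\abs{y}}e^{\frac{y\cdot x}{\abs{y}}}$. Integrating the first summand against $\Phi^{p-1}$ over the ball and applying Lemma~\ref{lem:inside-the-ball} with $u_1=e^{-\abs{\cdot}}$, $u_2=\Phi$, $r=p$, $\alpha_1=\alpha_2=1$, $\beta_1=0$, $\beta_2=\frac{N-1}{2}$ and $\eta=1$ produces a contribution $\lesssim\abs{y}^{\delta\sigma}e^{-\abs{y}}$, where $\sigma:=\frac{\parens{3-p}N+p-1}{2}$ is exactly the numerator of the exponent in the statement. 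For the second summand it suffices to use $e^{\frac{y\cdot x}{\abs{y}}}\le e^{\abs{x}}$ together with the finiteness of $\int e^{\abs{x}}\Phi\parens{x}^{p-1}\dif\Lebesgue\parens{x}$ (here $p>2$ enters). Since $\frac{N+1-2\delta}{2}\ge\frac{N+1-4\delta}{2}$ and $\abs{y}^{\delta\sigma}\ge1$, both contributions are $\lesssim\abs{y}^{\delta\sigma-\frac{N+1-4\delta}{2}}e^{-\abs{y}}$, which is the bound asserted in the lemma.

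For $B_y$, I would bound $e^{\frac{y\cdot x}{\abs{y}}}\le e^{\abs{x}}$ and use the decay $\Phi\parens{x}^{p-1}\lesssim\abs{x}^{-\frac{\parens{p-1}\parens{N-1}}{2}}e^{-\parens{p-1}\abs{x}}$ coming from \eqref{eqn:Phi-limit}, so that for $\abs{y}$ large
\[
\int_{\real^N\setminus B\parens{0,\abs{y}^\delta}}
e^{\frac{y\cdot x}{\abs{y}}}\Phi\parens{x}^{p-1}
\dif\Lebesgue\parens{x}
\lesssim
\int_{\abs{y}^\delta}^{\infty}\rho^{\frac{\parens{N-1}\parens{3-p}}{2}}e^{-\parens{p-2}\rho}\dif\rho
\lesssim
\abs{y}^{\frac{\delta\parens{N-1}\parens{3-p}}{2}}e^{-\parens{p-2}\abs{y}^\delta}.
\]
Since $\delta>0$, the exponential $e^{-\parens{p-2}\abs{y}^\delta}$ dominates every power of $\abs{y}$, so $B_y$ is, for $\abs{y}$ large, far smaller than the bound claimed for the whole quantity.

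The only genuine work is the bookkeeping of the powers of $\abs{y}$: one must verify that $\delta\sigma$ coming out of Lemma~\ref{lem:inside-the-ball}, together with the $\delta$-losses of Lemma~\ref{lem:angle}, reassembles into the exponent $\frac{\delta\parens{p-1+\parens{3-p}N}}{2}-\frac{N+1-4\delta}{2}$ of the statement, and that the hypotheses of Lemma~\ref{lem:inside-the-ball} are met, which reduces to $\parens{p-1}\parens{N-1}<2N$ (true for $p\le3$ and $N\in\set{2,3}$) and to $\delta>0$; thus the statement is really to be read for $0<\delta<\tfrac12$, since when $\delta=0$ the ball $B\parens{0,1}$ is fixed and $B_y$ ceases to be negligible. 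Beyond this accounting no obstacle remains.
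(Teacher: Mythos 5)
Your proof is correct and follows essentially the same route as the paper's: replace the kernel on $B\parens{0,\abs{y}^\delta}$ by $\abs{y}^{-\frac{N-1}{2}}e^{-\abs{y}}e^{\frac{y\cdot x}{\abs{y}}}$ via Lemma~\ref{lem:angle}, control the error with Lemma~\ref{lem:inside-the-ball}, and identify the leading term through Lemma~\ref{lem:symmetric-integral}; you merely make explicit the tail term $B_y$ that the paper leaves implicit in the phrase ``the result follows from Lemma~\ref{lem:symmetric-integral} and the definition of $\theta_\Phi$.'' Your observation that the tail forces $\delta>0$ (the lemma is indeed only invoked with $\delta\in\ooi{0,\frac{1}{2}}$ later on) is a fair, minor caveat about the stated range $0\leq\delta<\frac{1}{2}$.
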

\begin{proof}
Due to Lemmas \ref{lem:inside-the-ball} and \ref{lem:angle}, there exists
$M_\delta > 0$ such that
\begin{multline*}
\abs*{
	\int_{B \parens*{0, \abs{y}^\delta}}
		\parens*{
			\frac{e^{- \abs{y - x}}}{\abs{y - x}^{\frac{N - 1}{2}}}
			-
			\frac{e^{- \parens*{\abs{y} - \frac{y \cdot x}{\abs{y}}}}}
				{\abs{y}^{\frac{N - 1}{2}}}
		}
		\Phi \parens{x}^{p - 1}
	\dif \Lebesgue \parens{x}
}
\lesssim
\\
\lesssim
\frac{1}{\abs{y}^{\frac{N + 1 - 2 \delta}{2}}}
\int_{B \parens*{0, \abs{y}^\delta}}
		e^{- \abs{y - x}}
		\Phi \parens{x}^{p - 1}
	\dif \Lebesgue \parens{x}
+
\\
+
\frac{e^{- \abs{y}}}{\abs{y}^{\frac{N + 1 - 4 \delta}{2}}}
\int_{B \parens*{0, \abs{y}^\delta}}
	e^{\frac{y \cdot x}{\abs{y}}}
	\Phi \parens{x}^{p - 1}
\dif \Lebesgue \parens{x}
\lesssim
\frac
	{
		\abs{y}^{
			\frac
				{\delta \parens*{
					p - 1 + \parens{3 - p} N}
				}
				{2}
		}
	}
	{\abs{y}^{\frac{N + 1 - 4 \delta}{2}}}
e^{- \abs{y}}
\end{multline*}
for every $y \in \real^N \setminus B \parens{0, M_\delta}$. At this point, the result follows from Lemma \ref{lem:symmetric-integral} and the definition of the positive constant
$\theta_\Phi$.
\end{proof}

We can finally obtain an estimate of
$\abs{\abs{z}^{\frac{N - 1}{2}} e^{\abs{z}} \Phi \parens{z} - \theta_\Phi}$
for sufficiently large $\abs{z}$.

\begin{lem}
\label{lem:error-in-limit}
If $0 < \eps < 1$, then there exists
$M_\eps > 0$ such that
\[
\abs*{
	\Phi \parens{y}
	-
	\theta_\Phi
	\frac{e^{- \abs{y}}}{\abs{y}^{\frac{N - 1}{2}}}
}
\lesssim
\frac{e^{- \abs{y}}}{\abs{y}^{\frac{N + 1 - \eps}{2}}}
\]
for every $y \in \real^N \setminus B \parens{0, M_\eps}$.
\end{lem}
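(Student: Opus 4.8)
The plan is to exploit the convolution representation $\Phi = \Phi^{p - 1} * G$ recalled above. I would fix a small exponent $\delta \in \ooi{0, 1/2}$, to be chosen only at the end in terms of $\eps$, $p$ and $N$, and split
\[
\Phi \parens{y}
=
\int_{B \parens*{0, \abs{y}^\delta}} G \parens{y - x} \Phi \parens{x}^{p - 1} \dif \Lebesgue \parens{x}
+
\int_{\real^N \setminus B \parens*{0, \abs{y}^\delta}} G \parens{y - x} \Phi \parens{x}^{p - 1} \dif \Lebesgue \parens{x},
\]
expecting the first (near-field) integral to produce the main term $\theta_\Phi e^{- \abs{y}} \abs{y}^{- \frac{N - 1}{2}}$ with an acceptable remainder and the second (far-field) integral to be negligible.

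For the far-field integral I would use that $z \mapsto e^{\abs{z}} \abs{z}^{\frac{N - 1}{2}} G \parens{z}$ is bounded on $\real^N$ — its singularity at the origin is absorbed by the factor $\abs{z}^{\frac{N - 1}{2}}$ and its decay at infinity is the classical asymptotics of $K_{\frac{N - 2}{2}}$ — and that $z \mapsto e^{\abs{z}} \abs{z}^{\frac{N - 1}{2}} \Phi \parens{z}$ is bounded by \eqref{eqn:Phi-limit}. Then Lemma \ref{lem:outside-the-ball}, applied with $r = p$, $\alpha_1 = \alpha_2 = 1$, $\beta_1 = \beta_2 = \frac{N - 1}{2}$, $\eta = 1$ and the present $\delta$ (its hypotheses hold: $\parens{r - 1} \alpha_2 = p - 1 > 1 = \alpha_1$ since $p > 2$, and $0 \leq \frac{N - 1}{2} < N$), bounds the far-field integral by
\[
\abs{y}^{\delta \parens*{N - \frac{p \parens{N - 1}}{2}}} e^{- \abs{y}} \parens*{e^{- \parens{p - 2} \parens*{\abs{y} - \abs{y}^\delta}} + e^{- \parens{p - 2} \abs{y}^\delta}}.
\]
Because $p > 2$ and $\delta > 0$, the bracketed factor decays faster than every negative power of $\abs{y}$, so the whole far-field contribution is $\lesssim e^{- \abs{y}} \abs{y}^{- \frac{N + 1 - \eps}{2}}$ for $\abs{y}$ large.

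For the near-field integral I would first replace $G$ by its leading asymptotic profile: in dimension $3$ this replacement is exact, while in dimension $2$ the remainder is $\lesssim e^{- \abs{z}} \abs{z}^{- 3/2}$ for large $\abs{z}$ by Lemma \ref{lem:expansion-of-Greens}. On $B \parens*{0, \abs{y}^\delta}$ with $\abs{y}$ large one has $\abs{y - x} \geq \frac{1}{2} \abs{y}$, so the error introduced by this replacement is controlled by a constant times $\abs{y}^{- 3/2} \int_{B \parens*{0, \abs{y}^\delta}} e^{- \abs{y - x}} \Phi \parens{x}^{p - 1} \dif \Lebesgue \parens{x}$; Lemma \ref{lem:inside-the-ball}, used with $r = p$, $\alpha_1 = \alpha_2 = 1$, $\beta_1 = 0$ and $\beta_2 = \frac{N - 1}{2}$ (the condition $\beta_2 < \frac{N}{p - 1}$ holds because $p \leq 3$), bounds that last integral by $\abs{y}^{O \parens{\delta}} e^{- \abs{y}}$, so the error is $\lesssim \abs{y}^{O \parens{\delta} - 3/2} e^{- \abs{y}}$. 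What is left is precisely the integral $\int_{B \parens*{0, \abs{y}^\delta}} e^{- \abs{y - x}} \abs{y - x}^{- \frac{N - 1}{2}} \Phi \parens{x}^{p - 1} \dif \Lebesgue \parens{x}$ estimated in Lemma \ref{lem:error-in-limit:bounded-domain}, which equals $\theta_\Phi e^{- \abs{y}} \abs{y}^{- \frac{N - 1}{2}}$ up to an error of order $\abs{y}^{\frac{\delta \parens*{p - 1 + \parens{3 - p} N}}{2} - \frac{N + 1 - 4 \delta}{2}} e^{- \abs{y}}$. (Lemma \ref{lem:error-in-limit:bounded-domain} is the analytic heart of the matter: it rests on the expansion $\abs{y - x} \approx \abs{y} - \frac{y \cdot x}{\abs{y}}$ of Lemma \ref{lem:angle} together with the symmetrisation of Lemma \ref{lem:symmetric-integral}, which recognises the limiting integral as $\int e^{x_1} \Phi \parens{x}^{p - 1} \dif \Lebesgue \parens{x} = \theta_\Phi$; but it is already available.)

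To finish I would gather the three remainders. Two of them have an $\abs{y}$-exponent of the form $c \delta - d$ with $c > 0$ and $d \geq \frac{N + 1}{2} > \frac{N + 1 - \eps}{2}$, and the far-field term is smaller than $e^{- \abs{y}} \abs{y}^{- m}$ for every $m > 0$; consequently there is a single threshold $\delta_\eps \in \ooi{0, 1/2}$, depending only on $\eps$, $p$ and $N$, such that taking $0 < \delta \leq \delta_\eps$ forces every remainder to be $\lesssim e^{- \abs{y}} \abs{y}^{- \frac{N + 1 - \eps}{2}}$ once $\abs{y}$ is large. Fixing such a $\delta$ and a correspondingly large $M_\eps$ concludes the proof. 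Since the substantial work is packaged into the preceding lemmas, the only genuinely delicate point is that one and the same $\delta$ must beat all of the remainders at once — which is possible precisely because $d > \frac{N + 1 - \eps}{2}$ strictly, so each exponent $c \delta - d$ falls below $- \frac{N + 1 - \eps}{2}$ for $\delta$ small — while noting also that the far-field bound genuinely requires $p > 2$ (through the gain $e^{- \parens{p - 2} \abs{y}^\delta}$) and that the extra $\abs{z}^{- 3/2}$ behaviour of $G$ in dimension $2$ is what forces the dimension-dependent bookkeeping above. The $\eps$-loss in the statement is an artefact of the method — it comes from having to take $\delta$ strictly below $\frac{1}{2}$ in the Taylor expansion of $\abs{y - x}$ — not of the genuine asymptotics of $\Phi$.
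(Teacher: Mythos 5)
Your proposal is correct and follows essentially the same route as the paper: split $\Phi = \Phi^{p-1} * G$ over $B(0,|y|^\delta)$ and its complement, kill the far field with Lemma \ref{lem:outside-the-ball} (using $p>2$), replace $G$ by its asymptotic profile on the near field (exactly for $N=3$, via Lemma \ref{lem:expansion-of-Greens} for $N=2$, with the error controlled by Lemma \ref{lem:inside-the-ball}), and extract the main term from Lemma \ref{lem:error-in-limit:bounded-domain}. Your explicit final choice of $\delta = \delta_\eps$ small enough to beat every remainder exponent $c\delta - \frac{N+1}{2}$ is exactly the step the paper leaves implicit when it fixes $\delta \in \ooi{0,\tfrac12}$ at the outset.
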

\begin{proof}
Fix $\delta \in \ooi{0, \frac{1}{2}}$.

\paragraph{The case $N = 3$.}
As $K_{\frac{1}{2}} \parens{t} = \sqrt{\frac{\pi}{2 t}} e^{- t}$, we only have to estimate
\begin{equation}
\label{lem:error-in-limit:1}
\int_{\real^3 \setminus B \parens*{0, \abs{y}^\delta}}
	\frac{e^{- \abs{y - x}}}{\abs{y - x}} \Phi \parens{x}^{p - 1}
\dif \Lebesgue \parens{x}
\end{equation}
in function of $\abs{y}$ to conclude from an application of Lemma \ref{lem:error-in-limit:bounded-domain}. It follows from Lemma \ref{lem:outside-the-ball} that there exists
$M > 0$ such that
\[
\eqref{lem:error-in-limit:1}
\lesssim
\abs{y}^{2 \delta}
e^{- \abs{y}} e^{- \parens{p - 2} \abs{y}^\delta}
\]
for every $y \in \real^3 \setminus B \parens{0, M}$, hence the result.

\paragraph{The case $N = 2$.}

\subparagraph{Estimation on exterior domain.}
Due to Lemma \ref{lem:outside-the-ball}, there exists $M_\delta > 0$ such that
\[
\abs*{
	\int_{\real^2 \setminus B \parens{0, \abs{y}^\delta}}
		G \parens{x - y}
		\Phi \parens{x}^{p - 1}
	\dif \Lebesgue \parens{x}
}
\lesssim
\abs{y}^{\frac{\delta \parens{4 - p}}{2}}
e^{- \abs{y}}
e^{- \parens{p - 2} \abs{y}^\delta}
\]
for every $y \in \real^2 \setminus B \parens{0, M_\delta}$ and
$x \in B \parens{0, \abs{y}^\delta}$.

\subparagraph{Expansion on bounded domain.}
In view of Lemmas \ref{lem:inside-the-ball} and \ref{lem:expansion-of-Greens}, we can associate any $\eta \in \ooi{0, 1}$ with an
$M_{\delta, \eta} > 0$ such that
\begin{multline*}
\abs*{
	\int_{B \parens{0, \abs{y}^\delta}}
		\parens*{
			G \parens{x - y}
			-
			\frac
				{e^{- \abs{x - y}}}
				{
					2^{\frac{3}{2}}
					\pi^{\frac{1}{2}}
					\abs{x - y}^{\frac{1}{2}}
				}
		}
		\Phi \parens{x}^{p - 1}
	\dif \Lebesgue \parens{x}
}
\lesssim
\\
\lesssim
\int_{B \parens{0, \abs{y}^\delta}}
	\frac{e^{- \abs{x - y}}}{\abs{x - y}^{\frac{3}{2}}}
	\Phi \parens{x}^{p - 1}
\dif \Lebesgue \parens{x}
\lesssim
\\
\lesssim
\frac{1}{
	\parens*{
		\abs{y} - \abs{y}^\delta
	}^{\frac{3}{2}}
}
\int_{B \parens{0, \abs{y}^\delta}}
	e^{- \abs{x - y}}
	\Phi \parens{x}^{p - 1}
\dif \Lebesgue \parens{x}
\lesssim
\frac
	{\abs{y}^{\frac{\delta \parens{5 - p}}{2}}}
	{\abs{y}^{\frac{3}{2}}}
e^{- \abs{y}}
\end{multline*}
for every $y \in \real^2 \setminus B \parens{0, M_{\delta, \eta}}$ and
$x \in B \parens{0, \abs{y}^\delta}$. At this point, the result follows from an application of Lemma \ref{lem:error-in-limit:bounded-domain}.
\end{proof}

We seize the opportunity to obtain an approximation of
\[
\int
	\Phi \parens{y + x} \Phi \parens{x}^{p - 1}
\dif \Lebesgue \parens{x}
\]
for sufficiently large $\abs{y}$.

\begin{lem}
\label{lem:interaction}
If $0 < \eps < 1$, then there exists $M_\eps > 0$ such that
\[
\abs*{
	\int
		\Phi \parens{x + y}
		\Phi \parens{x}^{p - 1}
	\dif \Lebesgue \parens{x}
	-
	\theta_\Phi
	\Phi \parens{y}
}
\lesssim	
\frac{e^{- \abs{y}}}{\abs{y}^{\frac{N + 1 - \eps}{2}}},
\]
for every $y \in \real^N \setminus B \parens{0, M_\eps}$.
\end{lem}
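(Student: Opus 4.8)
The plan is to reduce everything to the asymptotics of $\Phi$ already recorded in Lemmas~\ref{lem:error-in-limit} and~\ref{lem:error-in-limit:bounded-domain}, by means of the exterior/interior splitting that pervades this appendix. First I would fix an auxiliary exponent $\eps' := \eps / 2$ and a radius $\delta \in \ooi{0, \frac{1}{2}}$ to be taken small at the end, and write
\[
\int \Phi \parens{x + y} \Phi \parens{x}^{p - 1} \dif \Lebesgue \parens{x}
=
\int_{B \parens{0, \abs{y}^\delta}} \Phi \parens{x + y} \Phi \parens{x}^{p - 1} \dif \Lebesgue \parens{x}
+
\int_{\real^N \setminus B \parens{0, \abs{y}^\delta}} \Phi \parens{x + y} \Phi \parens{x}^{p - 1} \dif \Lebesgue \parens{x}.
\]
Since $y \mapsto e^{\abs{y}} \abs{y}^{\frac{N - 1}{2}} \Phi \parens{y}$ is bounded by~\eqref{eqn:Phi-limit}, the exterior integral can be disposed of immediately: Lemma~\ref{lem:outside-the-ball} with $u_1 = u_2 = \Phi$, $r = p$, $\alpha_1 = \alpha_2 = 1$, $\beta_1 = \beta_2 = \frac{N - 1}{2}$ and $\eta = 1$ (its hypotheses hold because $p > 2$ forces $\parens{p - 1} \cdot 1 > 1$ and $\frac{N - 1}{2} < N$) bounds it by a quantity of the form $\abs{y}^{C \delta} e^{- \abs{y}} \parens*{e^{- \parens{p - 2} \parens{\abs{y} - \abs{y}^\delta}} + e^{- \parens{p - 2} \abs{y}^\delta}}$, which decays faster than $\abs{y}^{- m} e^{- \abs{y}}$ for every $m$ and is thus negligible.

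For the interior integral, which carries the main term, I would first replace $\Phi \parens{x + y}$ by its asymptotic profile. For $x \in B \parens{0, \abs{y}^\delta}$ we have $\abs{x + y} \geq \abs{y} - \abs{y}^\delta \to \infty$, so Lemma~\ref{lem:error-in-limit} gives $\abs*{\Phi \parens{x + y} - \theta_\Phi \abs{x + y}^{- \frac{N - 1}{2}} e^{- \abs{x + y}}} \lesssim \abs{x + y}^{- \frac{N + 1 - \eps'}{2}} e^{- \abs{x + y}}$. Multiplying by $\Phi \parens{x}^{p - 1}$, integrating over $B \parens{0, \abs{y}^\delta}$ and applying Lemma~\ref{lem:inside-the-ball} with $u_1 \parens{z} = \abs{z}^{- \frac{N + 1 - \eps'}{2}} e^{- \abs{z}}$, $u_2 = \Phi$ and $r = p$ (the constraint $\beta_2 = \frac{N - 1}{2} < \frac{N}{p - 1}$ holds since $p \leq 3 < 5$ when $N = 2$ and $p < 3 < 4$ when $N = 3$), the replacement error is $\lesssim \abs{y}^{\delta \parens*{N - \parens{p - 1} \frac{N - 1}{2}} - \frac{N + 1 - \eps'}{2}} e^{- \abs{y}}$; as $N - \parens{p - 1} \frac{N - 1}{2} > 0$ for the admissible pairs $\parens{N, p}$, taking $\delta$ small enough renders this $\lesssim \abs{y}^{- \frac{N + 1 - \eps}{2}} e^{- \abs{y}}$.

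It then remains to expand $\theta_\Phi \int_{B \parens{0, \abs{y}^\delta}} \abs{x + y}^{- \frac{N - 1}{2}} e^{- \abs{x + y}} \Phi \parens{x}^{p - 1} \dif \Lebesgue \parens{x}$. Substituting $x \mapsto - x$ and using that $\Phi^{p - 1}$ is even, this integral equals $\int_{B \parens{0, \abs{y}^\delta}} \abs{y - x}^{- \frac{N - 1}{2}} e^{- \abs{y - x}} \Phi \parens{x}^{p - 1} \dif \Lebesgue \parens{x}$, which Lemma~\ref{lem:error-in-limit:bounded-domain} evaluates as $\theta_\Phi \abs{y}^{- \frac{N - 1}{2}} e^{- \abs{y}}$ up to an error $\lesssim \abs{y}^{\frac{\delta \parens{p - 1 + \parens{3 - p} N}}{2} - \frac{N + 1 - 4 \delta}{2}} e^{- \abs{y}}$; since $p - 1 + \parens{3 - p} N > 0$ for the admissible pairs, shrinking $\delta$ once more makes this $\lesssim \abs{y}^{- \frac{N + 1 - \eps}{2}} e^{- \abs{y}}$. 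Consequently the interior integral equals $\theta_\Phi^2 \abs{y}^{- \frac{N - 1}{2}} e^{- \abs{y}}$ up to an admissible error; since Lemma~\ref{lem:error-in-limit} likewise gives $\theta_\Phi \Phi \parens{y} = \theta_\Phi^2 \abs{y}^{- \frac{N - 1}{2}} e^{- \abs{y}} + O\parens*{\abs{y}^{- \frac{N + 1 - \eps'}{2}} e^{- \abs{y}}}$, subtracting the two expansions and collecting all error terms yields the claim. The main (and only genuinely delicate) point is the bookkeeping of the powers of $\abs{y}$ in the last two paragraphs: one must first fix $\eps' < \eps$ and then choose $\delta$ small in terms of $\eps - \eps'$ and of the positive quantities $N - \parens{p - 1} \frac{N - 1}{2}$ and $p - 1 + \parens{3 - p} N$, so that every polynomial prefactor is absorbed into $\abs{y}^{- \frac{N + 1 - \eps}{2}}$.
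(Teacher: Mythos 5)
Your argument is correct and follows the paper's proof essentially step for step: the same splitting at radius $\abs{y}^\delta$, Lemma \ref{lem:outside-the-ball} to dispose of the exterior part, Lemmas \ref{lem:error-in-limit} and \ref{lem:inside-the-ball} to replace $\Phi \parens{x + y}$ by its asymptotic profile on the ball, and Lemma \ref{lem:error-in-limit:bounded-domain} to extract the main term. Your write-up is in fact a bit more explicit than the paper's terse final step, where the substitution $x \mapsto -x$ and the comparison of $\theta_\Phi^2 \abs{y}^{-\frac{N - 1}{2}} e^{- \abs{y}}$ with $\theta_\Phi \Phi \parens{y}$ are left implicit.
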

\begin{proof}
Fix $\delta \in \ooi{0, \frac{1}{2}}$.

\paragraph{Estimation on exterior domain.}
Due to Lemma \ref{lem:outside-the-ball}, there exists $M \in \ooi{0, \infty}$ such that
\[
\int_{\real^N \setminus B \parens*{0, \abs{y}^\delta}}
	\Phi \parens{x + y} \Phi \parens{x}^{p - 1}
\dif \Lebesgue \parens{x}
\lesssim
\abs{y}^{\frac{\delta p}{2}}
\abs{y}^{\frac{N + 1}{2}} e^{- \abs{y}} e^{- \parens{p - 2} \abs{y}}
\]
for every $y \in \real^N \setminus B \parens{0, M}$.

\paragraph{Expansion on bounded domain.}
In view of Lemmas \ref{lem:inside-the-ball} and \ref{lem:error-in-limit}, we can associate any $\eta \in \ooi{0, 1}$ with an $M_\eta > 0$ such that
\begin{multline*}
\abs*{
	\int_{B \parens*{0, \abs{y}^\delta}}
		\parens*{
			\Phi \parens{x + y}
			-
			\theta_\Phi \frac{e^{- \abs{x + y}}}{\abs{x + y}^{\frac{N - 1}{2}}}
		}
		\Phi \parens{x}^{p - 1}
	\dif \Lebesgue \parens{x}
}
\lesssim
\\
\lesssim
\int_{B \parens*{0, \abs{y}^\delta}}
	\frac{e^{- \abs{x + y}}}{\abs{x + y}^{\frac{N + 1 - \eps}{2}}}
	\Phi \parens{x}^{p - 1}
\dif \Lebesgue \parens{x}
\lesssim
\frac{e^{- \abs{y}}}{
	\abs{y}^{\parens*{
		\frac{N + 1 - \eps}{2} - \delta \frac{\parens{p - 1} \parens{N - 1}}{2} - \eta
	}}
}
\end{multline*}
for every $y \in \real^N \setminus B \parens{0, M_\eta}$. At this point, the result follows from Lemma \ref{lem:error-in-limit:bounded-domain}.
\end{proof}

\sloppy
\printbibliography
\end{document}